\tikzstyle{shaded}=[fill=red!10!blue!20!gray!30!white]
\tikzstyle{shaded line}=[double=red!10!blue!20!gray!30!white, double distance=1.5mm, draw=black]
\tikzstyle{unshaded}=[fill=white]
\tikzstyle{unshaded line}=[double=white, double distance=1.5mm, draw=black]
\tikzstyle{Tbox}=[circle, draw, thick, fill=white, opaque,]
\tikzstyle{empty box}=[circle, draw, thick, fill=white, opaque, inner sep=2mm]
\tikzstyle{background rectangle}= [fill=red!10!blue!20!gray!40!white,rounded corners=2mm] 
\tikzstyle{on}=[very thick, red!50!blue!50!black]
\tikzstyle{off}=[gray]
\tikzstyle{traces}=[scale=.2, inner sep=1mm]
\tikzstyle{quadratic}=[scale=.4, inner sep=1mm, baseline]
\tikzstyle{annular}=[scale=.7, inner sep=1mm, baseline]
\tikzstyle{make triple edge size}= [scale=.4, inner sep=1mm,baseline] 
\tikzstyle{icosahedron network}=[scale=.3, inner sep=1mm, baseline]
\tikzstyle{ATLsix}=[scale=.25, baseline]
\tikzstyle{TL12}=[scale=.15,baseline]
\tikzstyle{PAdefn}=[scale=.7,baseline]
\tikzstyle{TLEG}=[scale=.5,baseline]
\newtheorem{lemma}{Lemma}[section]
\newtheorem{definition}[lemma]{Definition}
\newtheorem{theorem}[lemma]{Theorem}
\newtheorem{proposition}[lemma]{Proposition}
\newtheorem{remark}[lemma]{Remark}
\newtheorem{corollary}[lemma]{Corollary}
\newtheorem{notations}[lemma]{Notations}
\newenvironment{claim}[1]{\par\noindent\underline{Claim:}\space#1}{}
\newenvironment{claimproof}[1]{\par\noindent\underline{Proof:}\space#1}{\hfill $\blacksquare$}
\DeclareMathOperator{\tr}{tr}
\DeclareMathOperator{\id}{id}
\DeclareMathOperator{\im}{im}
\title[Ore's theorem on subfactor planar algebras]{Ore's theorem on subfactor planar algebras}
\author[Sebastien Palcoux]{Sebastien Palcoux}
\address{Institute of Mathematical Sciences, Chennai, India}
\email{sebastienpalcoux@gmail.com}
\subjclass[2010]{46L37, 05E15 (Primary) 06D10, 20C15, 05E10 (Secondary)}
\keywords{von Neumann algebra; subfactor; planar algebra; biprojection; algebraic combinatorics; distributive lattice; finite group; representation}
\begin{document}

\begin{abstract}
This article proves that an irreducible subfactor planar algebra with a distributive biprojection lattice admits a minimal $2$-box projection generating the identity biprojection. It is a generalization (conjectured in 2013) of a theorem of \O ystein Ore on distributive intervals of finite groups (1938), and a corollary of a natural subfactor extension of a conjecture of Kenneth S. Brown in algebraic combinatorics (2000). We deduce a link between combinatorics and representations in finite group theory. 
\end{abstract}

\maketitle

\tableofcontents

\section{Introduction} 

Any finite group $G$ acts outerly on the hyperfinite ${\rm II}_1$ factor $R$, and the group subfactor $(R\subseteq R\rtimes G)$, of index $|G|$, remembers the group \cite{jo}. Jones proved in \cite{jo2} that the set of possible values for the index $ |M:N| $ of a subfactor $ (N \subseteq M) $ is 
 $$ \{ 4 cos^2(\frac{\pi}{n}) \ \vert \ n \geq 3 \} \sqcup [4,\infty]. $$
By Galois correspondence \cite{nk}, the lattice of intermediate subfactors of $(R\subseteq R\rtimes G)$ is isomorphic to the subgroup lattice of $G$.
Moreover, Watatani \cite{wa} extended the finiteness of the subgroup lattice to any irreducible finite index subfactor. Then, the subfactor theory can be seen as an augmentation of the finite group theory, where the indices are not necessarily integers. The notion of subfactor planar algebra \cite{jo4} is a diagrammatic axiomatization of the standard invariant of a finite index ${\rm II}_1$ subfactor \cite{js}. Bisch \cite{bi} proved that the intermediate subfactors are given by the biprojections (see Definition \ref{debi}) in the $2$-box space of the corresponding planar algebra. The recent results of Liu \cite{li} on the biprojections are also crucial for this article (see Section \ref{bipro}).

\O ystein Ore proved in 1938 that a finite group is cyclic if and only if its subgroup lattice is distributive, and he extended one way as follows:

\begin{theorem}[\cite{or}, Theorem 7] \label{oreintro}
Let $[H,G]$ be a distributive interval of finite groups. Then there is $g \in G$ such that $\langle Hg \rangle = G$.
\end{theorem}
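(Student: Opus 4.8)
The plan is to reduce the statement to a \emph{non-covering} property of the subgroup interval and then to settle that property by an induction whose only non-formal input is a single consequence of distributivity.

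First I would record the translation $\langle Hg\rangle=\langle H,g\rangle$: since $e\in H$ we have $g\in\langle Hg\rangle$, and then $h=(hg)g^{-1}\in\langle Hg\rangle$ for every $h\in H$, so $\langle Hg\rangle\supseteq\langle H,g\rangle$, the reverse inclusion being clear. Writing $M_1,\dots,M_n$ for the coatoms of the finite interval $[H,G]$, i.e. the maximal elements of $[H,G]\setminus\{G\}$, I would observe that $\langle H,g\rangle=G$ if and only if $g$ lies in none of the $M_i$: any proper subgroup containing $H$ sits below some coatom, so if $g$ avoids every $M_i$ then $\langle H,g\rangle$ cannot be proper, and conversely $g\in M_i$ forces $\langle H,g\rangle\subseteq M_i\subsetneq G$. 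Thus the theorem is equivalent to the assertion that $G\neq\bigcup_{i=1}^n M_i$: the desired $g$ exists precisely when the coatoms do not cover $G$.

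I would prove the non-covering statement by induction on the number of subgroups in $[H,G]$. The cases $n\le 1$ are immediate ($H=G$ gives $g=e$; a single coatom $M_1$ gives any $g\in G\setminus M_1$). For $n\ge 2$, fix one coatom $M_n$. The interval $[H,M_n]$ is again distributive, being a sub-interval of a distributive lattice, and it is strictly smaller, so by induction there is $m\in M_n$ with $\langle H,m\rangle=M_n$; in particular $m$ avoids every proper subgroup of $M_n$ containing $H$, and since $M_i\cap M_n$ is such a subgroup for each $i\neq n$ (it is proper in $M_n$ because distinct coatoms are incomparable), we get $m\notin M_i\cap M_n$, i.e. $m\notin M_i$, for all $i\neq n$. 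Next I would choose $a\in\bigl(\bigcap_{i\neq n}M_i\bigr)\setminus M_n$, so that $a\in M_i$ for all $i\neq n$ but $a\notin M_n$, and set $g=ma$. A two-line check finishes it: if $g\in M_n$ then $a=m^{-1}g\in M_n$, a contradiction; and for $i\neq n$, if $g\in M_i$ then $m=ga^{-1}\in M_i$, again a contradiction. Hence $g$ avoids all the $M_i$.

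The one step that is \emph{not} formal, and which I expect to be the crux, is the existence of such an $a$, that is, the claim $\bigcap_{i\neq n}M_i\not\subseteq M_n$. This is exactly where distributivity enters. Indeed, for distinct coatoms one has $M_n\vee M_i=G$, so the distributive law gives
\[
M_n\vee\Bigl(\bigwedge_{i\neq n}M_i\Bigr)=\bigwedge_{i\neq n}\bigl(M_n\vee M_i\bigr)=G ,
\]
which is impossible if $\bigwedge_{i\neq n}M_i\le M_n$, since then the join would collapse to $M_n\neq G$. I would emphasize that this is the whole point of the hypothesis: without distributivity the conclusion genuinely fails, the Klein four-group being the minimal example, where the three order-two subgroups are coatoms with $M_1\cap M_2=\{e\}\subseteq M_3$ and do cover the group. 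Finiteness of the interval, guaranteed for finite groups (and the role played by Watatani's theorem in the planar-algebra generalization), is what legitimizes ``finitely many coatoms'' and ``every proper subgroup lies under a coatom''.
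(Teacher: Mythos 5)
Your proof is correct, but it follows a genuinely different route from the paper's. The paper proves the statement by first invoking Proposition \ref{topBn} (Birkhoff--Stanley: a finite distributive lattice is top Boolean) and then proving the stronger Theorem \ref{ore} for top Boolean intervals: it inducts on the rank of the Boolean top interval, picking an \emph{atom} $K$ of $[H,G]$, obtaining $g$ with $\langle K,g\rangle=G$ by induction, decomposing the coset $Kg$ into $H$-cosets $Hg_i$ so that $K\vee\langle Hg_i\rangle=G$ for each $i$, and using the Boolean complement lemma (Lemma \ref{compl}) to force $\langle Hg_i\rangle=G$ for some $i$; a separate top-interval reduction (its Claim \ref{topred}) then removes the Boolean hypothesis below the meet of the coatoms. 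You instead reduce the theorem to the assertion that the coatoms $M_1,\dots,M_n$ do not cover $G$, induct on the cardinality of the interval by descending into $[H,M_n]$ (legitimate because distributivity, unlike top-Booleanness, is inherited by sub-intervals), extract from distributivity the single fact $M_n\vee\bigwedge_{i\neq n}M_i=\bigwedge_{i\neq n}(M_n\vee M_i)=G$, hence $\bigcap_{i\neq n}M_i\not\subseteq M_n$, and conclude with the element trick $g=ma$. Your argument is more elementary and self-contained: no Birkhoff representation theorem, no Boolean complementation, and your identification of where distributivity is genuinely needed (with the Klein four-group as the minimal failure) is exactly right. What the paper's route buys is, first, a strictly weaker hypothesis (top Boolean rather than distributive), which your induction cannot reach since the sub-intervals $[H,M_n]$ you descend into need not inherit that property; and second, a proof whose every step --- coset decomposition corresponding to spectral decomposition of a positive operator into minimal projections, joins of generated subgroups corresponding to generated biprojections, the complement lemma, the top reduction --- has a planar-algebra counterpart, which is precisely how Theorem \ref{mainsubfactor} is obtained. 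Your key step $g=ma$ is an element-by-element manipulation with no evident $2$-box analogue, so while it settles the group-theoretic statement cleanly, it would not serve the generalization that is the point of the paper.
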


\noindent This article generalizes Ore's Theorem \ref{oreintro} to planar algebras as follows: 

\begin{theorem} \label{mainintro}
Let $\mathcal{P}$ be an irreducible subfactor planar algebra with a distributive biprojection lattice. Then there is a minimal $2$-box projection generating the identity biprojection (it is called w-cyclic).
\end{theorem}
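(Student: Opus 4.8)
The plan is to reformulate w-cyclicity as a non-covering statement and then settle it by a M\"obius/inclusion--exclusion computation over the biprojection lattice, with distributivity supplying the positivity. First I would record that, writing $\mathrm{id}$ for the top biprojection and $e_1$ for the Jones projection at the bottom, the biprojection $\langle p \rangle$ generated by a minimal projection $p$ is the smallest biprojection dominating $p$ (in the sense of Bisch \cite{bi} and Liu \cite{li}; see Section \ref{bipro}). Hence $p$ is w-cyclic exactly when it lies under no proper biprojection, equivalently under none of the coatoms $b_1,\dots,b_k$ of the lattice $\mathcal{L}=[e_1,\mathrm{id}]$. So the theorem reduces to the assertion that the minimal projections lying under some coatom do not exhaust all of them.

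Second, I would attach to each biprojection $b$ a nonnegative weight $w(b)$ measuring the trace-mass of the minimal projections it dominates --- the planar-algebra analogue of the order $|K|$ of an intermediate subgroup $K$ in Ore's setting --- and establish a M\"obius-inversion identity
\[ W(\mathrm{id}) \;=\; \sum_{b\in\mathcal{L}} \mu(b,\mathrm{id})\, w(b), \]
where $W(\mathrm{id})$ is the total mass of w-cyclic minimal projections; this mirrors the group identity $f(G)=\sum_{K\in[H,G]}\mu(K,G)\,|K|$ underlying Theorem \ref{oreintro}. It then suffices to prove $W(\mathrm{id})>0$. Invoking distributivity, $\mu(b,\mathrm{id})\in\{-1,0,1\}$, equal to $(-1)^{\mathrm{rank}[b,\mathrm{id}]}$ on Boolean intervals and $0$ otherwise; since in a distributive lattice every meet $\bigwedge_{i\in S}b_i$ of distinct coatoms has $[\,\bigwedge_{i\in S}b_i,\mathrm{id}\,]$ Boolean while all other elements contribute $0$, the sum collapses to $\sum_{S\subseteq\{1,\dots,k\}}(-1)^{|S|}\,w\bigl(\bigwedge_{i\in S}b_i\bigr)$.

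Third, the crucial input is a multiplicativity of the weights in distributive position: I expect to use Liu's biprojection machinery \cite{li} to show that whenever $[\,b\wedge b',\,b\vee b'\,]$ is Boolean one has $w(b\vee b')\,w(b\wedge b') = w(b)\,w(b')$, the planar-algebra form of the coprimality that for subgroups reads $|KK'|\,|K\cap K'|=|K|\,|K'|$. Propagating this relation through the Boolean arrangement of the coatoms lets the alternating sum factor, giving
\[ W(\mathrm{id}) \;=\; w(\mathrm{id})\prod_{i=1}^{k}\Bigl(1-\tfrac{w(b_i)}{w(\mathrm{id})}\Bigr), \]
and each factor is strictly positive because every coatom $b_i$ is a proper biprojection; hence $W(\mathrm{id})>0$ and a w-cyclic minimal projection exists.

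Finally, I anticipate the main obstacle to be precisely this multiplicativity/combination step. In the group case coset counting makes it automatic, but in a planar algebra one must rule out the interaction obstructions between two biprojections and show that two biprojections in distributive position admit a genuine common refinement at the level of minimal projections, so that the trace-mass really multiplies. I expect this to follow from the exchange relation and the analysis of the biprojection generated by two biprojections (Section \ref{bipro}), together with the fact that distributivity forbids the modular-but-non-distributive configurations in which multiplicativity would fail. Organizing the argument as an induction on $|\mathcal{L}|$, peeling off one coatom and passing to the intermediate subfactor planar algebras cut out by the biprojections, should package the combination step cleanly and reduce the base case to a chain, where w-cyclicity is immediate.
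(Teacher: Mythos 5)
Your reduction of w-cyclicity to ``some minimal projection lies under no coatom'' is correct, but the two pillars your argument then rests on are precisely the unproven (and in part explicitly open) content, so the proposal is not a proof. First, the M\"obius-inversion identity $W(\id)=\sum_{b}\mu(b,\id)\,w(b)$ has no justification in the planar-algebra setting. In the group case it comes from partitioning the \emph{finite} set of cosets $Hg$ according to the subgroup $\langle Hg\rangle$ they generate; in $\mathcal{P}_{2,+}$ there is no such finite set --- the minimal projections form a continuum (projective spaces over the central summands), and no weight function is exhibited that satisfies the tower property needed for inversion. The natural candidate $w(b)=\delta^2\tr(b)$ turns your sum $\sum_b \mu(b,\id)\,\delta^2\tr(b)$ into the Euler totient of \cite{p3}, but then the implication ``totient positive $\Rightarrow$ w-cyclic'' is itself a nontrivial theorem of \cite{p3} (proved with the same intermediate-subfactor machinery as the present paper, not by counting), and the positivity of that totient is exactly the subfactor extension of Brown's conjecture, which this paper explicitly describes as open: Theorem \ref{mainintro} is stated to be a \emph{corollary} of that conjecture, and the whole point of the paper is to prove the theorem without it. Second, your ``crucial input'' --- multiplicativity $w(b\vee b')\,w(b\wedge b')=w(b)\,w(b')$ in distributive position --- is not supplied by distributivity even for groups: there it reads $|KK'|=|\langle K,K'\rangle|$, i.e.\ permutability of the pair, which is an extra hypothesis (essentially the ``group-complemented'' condition under which \cite{bp} can settle Brown's problem for Boolean intervals), not a consequence of the lattice being distributive. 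You flag this step as the main obstacle and offer only the expectation that Liu's machinery handles it; nothing in \cite{li} or the exchange relation gives it. A smaller but real gap: your base case is not immediate either --- even for a chain $[e_1,\id]$ of length $\ge 2$, w-cyclicity already requires relating w-cyclicity of $\mathcal{P}(b,\id)$ to projections of $\mathcal{P}_{2,+}$ (Proposition \ref{right} and Lemmas \ref{lem1}, \ref{lem2}), which your outline never develops.

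For contrast, the paper's proof avoids all counting: it reduces to a top Boolean lattice (Propositions \ref{topBn} and \ref{topw}), then inducts on the Boolean rank. Given an atom $b$, the inductive hypothesis produces a minimal projection $x\in\mathcal{P}_{2,+}(b,\id)$ generating $\id$ there; transporting it with the isomorphism $r_b$ of Theorem \ref{2iso} and using $\langle b*v*b\rangle=\langle b,v\rangle$ (Lemmas \ref{lem1}, \ref{lem2}), one gets $b\vee\langle v\rangle=\id$ for \emph{every} minimal $v\preceq r_b(x)$, whence $\langle v\rangle\in\{b^{\complement},\id\}$ by Lemma \ref{compl}; if all such $v$ gave $b^{\complement}$, the spectral decomposition $r_b(x)\sim\sum_i v_i$ would force $\id\le b^{\complement}$, a contradiction. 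If you want to pursue your route, the honest formulation is: prove the distributive case of the Euler-totient conjecture of \cite{p3}; that would indeed reprove Theorem \ref{mainintro}, but it is strictly harder than what the paper does, not an alternative elementary argument.
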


\noindent In general, we deduce a non-trivial upper bound for the minimal number of minimal projections generating the identity biprojection. Note that Theorem \ref{mainintro} was conjectured for the first time in a conference of the author in 2013\footnote{Annual meeting of noncommutative geometry, Caen University, December 5th, 2013.} and lastly in \cite[Conjecture 5.11]{p1}. The following application is a dual version of Theorem \ref{oreintro}. See Definition \ref{fixstab} for the notations.
\begin{theorem} \label{dualoreintro}  Let $[H,G]$ be a distributive interval of finite groups. Then $\exists V$ irreducible complex representation of $G$ such that $G_{(V^H)}=H$.
\end{theorem}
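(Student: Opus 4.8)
The plan is to derive Theorem~\ref{dualoreintro} from Theorem~\ref{mainintro} by realizing the interval $[H,G]$ as the biprojection lattice of a concrete group-theoretic subfactor planar algebra whose minimal $2$-box projections encode the irreducible representations of $G$. Let $G$ act outerly on the hyperfinite ${\rm II}_1$ factor $R$ and consider the fixed-point subfactor $(R^G \subseteq R^H)$. It is irreducible, since $(R^G)' \cap R = \mathbb{C}$ and $R^H \subseteq R$ force $(R^G)' \cap R^H = \mathbb{C}$; it has finite index $[G:H]$; and by the Galois correspondence its intermediate subfactors are precisely the $R^K$ with $K \in [H,G]$. Hence, by Bisch's theorem \cite{bi} (see Definition~\ref{debi}), its biprojection lattice is anti-isomorphic to $[H,G]$. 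As distributivity is self-dual, this lattice is distributive, so Theorem~\ref{mainintro} applies and produces a minimal $2$-box projection generating the identity biprojection.

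The key step is to make the $2$-box space $\mathcal{P}_{2,+}$ explicit. I would first identify it with the Hecke algebra $e_H\,\mathbb{C}[G]\,e_H \cong \operatorname{End}_G(\mathbb{C}[G/H])$, where $e_H = \tfrac{1}{|H|}\sum_{h\in H} u_h$, so that as a $*$-algebra $\mathcal{P}_{2,+} \cong \bigoplus_{V} M_{\dim V^H}(\mathbb{C})$, the sum ranging over the irreducible complex representations $V$ of $G$ with $V^H \neq 0$. Under this identification the biprojection attached to $K \in [H,G]$ is $e_K = \tfrac{1}{|K|}\sum_{k\in K} u_k$, which acts on each block $V$ as the orthogonal projection onto $V^K$; in particular the identity biprojection is the unit $e_H$ (the case $K=H$), and a minimal projection is the rank-one projection $P_{\mathbb{C}\xi}$ onto a line $\mathbb{C}\xi \subseteq V^H$ for a single irreducible $V$. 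Since $P_{\mathbb{C}\xi} \leq e_K$ if and only if $\xi \in V^K$, i.e. if and only if $K \leq \operatorname{Stab}_G(\xi)$, the biprojection generated by $P_{\mathbb{C}\xi}$---the smallest biprojection dominating it, in the sense provided by Liu's results (Section~\ref{bipro})---is exactly $e_{\operatorname{Stab}_G(\xi)}$.

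It then remains to assemble the pieces. The minimal projection supplied by Theorem~\ref{mainintro} is, under the dictionary above, a line $\mathbb{C}\xi \subseteq V^H$ for some irreducible $V$, and ``generating the identity biprojection'' reads $e_{\operatorname{Stab}_G(\xi)} = e_H$, i.e. $\operatorname{Stab}_G(\xi) = H$. Since $\xi$ is $H$-fixed we have $H \subseteq G_{(V^H)} \subseteq \operatorname{Stab}_G(\xi) = H$, whence $G_{(V^H)} = H$ in the notation of Definition~\ref{fixstab}; this is in fact slightly stronger than the stated conclusion, as a single $\xi$ already has pointwise stabilizer $H$. The main obstacle is the middle paragraph: establishing that $\mathcal{P}_{2,+}$ is the Hecke algebra with the indicated biprojections, and---most delicately---checking that the planar-algebraic ``biprojection generated by a minimal projection'' coincides with the lattice-theoretic smallest dominating biprojection $e_{\operatorname{Stab}_G(\xi)}$, together with the bookkeeping that makes the self-dual order reversal of $[H,G]$ send the identity biprojection to the subgroup $H$ rather than to $G$.
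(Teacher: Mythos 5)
Your proposal is correct and takes essentially the same route as the paper: the paper likewise applies its main theorem to the subfactor $(R^G \subseteq R^H)$, whose biprojection lattice is anti-isomorphic to $[H,G]$ and hence distributive (the paper phrases this via bottom Boolean intervals and Proposition \ref{topBn}), and then translates the resulting minimal $2$-box projection into the representation-theoretic conclusion. Your Hecke-algebra dictionary (minimal projections correspond to lines $\mathbb{C}\xi \subseteq V^H$, and the generated biprojection is $e_{G_{(\mathbb{C}\xi)}}$, so that $H \subseteq G_{(V^H)} \subseteq G_{(\mathbb{C}\xi)} = H$) is precisely the content of the paper's Proposition \ref{left} (via Theorem \ref{2iso}) combined with Lemma \ref{corrminstab}, which together make up Theorem \ref{lwgrp}.
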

\noindent Next, we deduce a non-trivial upper bound for the minimal number of irreducible components for a faithful complex representation of $G$, involving the subgroup lattice only. This is a new link between combinatorics and representations in finite group theory.  
 
Finally, the appendix proves that in the irreducible depth $2$ case, the coproduct of two minimal central projections is given by the fusion rule of the corresponding irreducible complex representations.

This article generalizes results from finite group theory to subfactor theory (as for \cites{bdlr,p1,p3,jlw,wa,xu1,xu2}), applying back to new results in finite group theory (which is quite rare). An expert in group theory suggested the author to write a group theoretic translation of the proof of these applications \cite{p4}.
Otherwise, the author investigated (with Mamta Balodi) an other approach for a direct proof of these applications, related to a problem in algebraic and geometric combinatorics, ``essentially" due to K.S. Brown, asking whether the M\"obius invariant of the bounded coset poset $P$ of a finite group (which is equal to the reduced Euler characteristic of the order complex of the proper part of $P$) is nonzero (\cite[page 760]{sw} and \cite[Question 4]{br}). These investigations gave rise to \cite{bp}. In fact, these applications are a consequence of a \emph{relative version} of Brown's problem. Shareshian and Woodroofe proved in \cite{sw} an other consequence of Brown's problem. In \cite[Section 6]{p3}, the author extended Brown's problem to any irreducible subfactor planar algebra and explained in details how it implies Theorem \ref{mainintro}.
  
 For the convenience of the reader and because this article proves the optimal version of Ore's theorem on irreducible subfactor planar algebras, we will reproduce some preliminaries of \cites{p1,p3}, for being quite self-contained.


\normalsize
\section{Basics on lattice theory}
A \emph{lattice} $(L, \wedge , \vee)$ is a poset $L$  in which every two elements $a,b$ have a unique supremum (or \emph{join}) $a \vee b$ and a unique infimum (or \emph{meet}) $a \wedge b$. Let $G$ be a finite group. The set of subgroups $ K \subseteq G$ forms a lattice,  denoted by $\mathcal{L}(G)$, ordered by $\subseteq$, with $K_1 \vee K_2 = \langle K_1,K_2 \rangle$ and $K_1 \wedge K_2 =  K_1 \cap K_2 $. A \emph{sublattice} of $(L, \wedge , \vee)$ is a subset $L' \subseteq L$ such that $(L', \wedge , \vee)$ is also a lattice. If $a,b \in L$ with $a \le b$, then the \emph{interval} $[a,b]$ is the sublattice $\{c \in L \ \vert \ a \le c \le b \}$. Any finite lattice is \emph{bounded}, i.e. admits a minimum and a maximum, denoted by $\hat{0}$ and $\hat{1}$. The \emph{atoms} are the minima of $L \setminus \{\hat{0}\}$. The \emph{coatoms} are the maxima of $L \setminus \{\hat{1}\}$. Consider a finite lattice, $b$ the join of its atoms and $t$ the meet of its coatoms, then let call $[\hat{0},b]$ and $[t,\hat{1}]$ its \emph{bottom} and \emph{top intervals}. A lattice is \emph{distributive} if the join and meet operations distribute over each other.
\noindent  A distributive bounded lattice is called \emph{Boolean} if any element $a$ admits a unique \emph{complement} $a^{\complement}$ (i.e. $a \wedge a^{\complement} = \hat{0}$ and $a \vee a^{\complement} = \hat{1}$).

\begin{lemma} \label{compl} Let $a$ and $b$ be two elements of a Boolean lattice. If $a \vee b = \hat{1}$ then $b \ge a^{\complement}$. In particular, if $a$ is an atom then $b \in \{a^{\complement},\hat{1}\}$.
\end{lemma}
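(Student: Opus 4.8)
The plan is to derive the inequality $b \ge a^{\complement}$ directly from distributivity, and then to upgrade it to the stated dichotomy in the atom case using the \emph{uniqueness} of complements. For the main assertion, I would meet both sides of the hypothesis $a \vee b = \hat{1}$ with $a^{\complement}$. Meeting with $\hat{1}$ is the identity, so the left-hand side collapses to $a^{\complement} \wedge \hat{1} = a^{\complement}$. On the right-hand side, distributivity and the defining relation $a^{\complement} \wedge a = \hat{0}$ give
\[
a^{\complement} \wedge (a \vee b) = (a^{\complement} \wedge a) \vee (a^{\complement} \wedge b) = \hat{0} \vee (a^{\complement} \wedge b) = a^{\complement} \wedge b.
\]
Comparing the two computations yields $a^{\complement} = a^{\complement} \wedge b$, which is exactly $a^{\complement} \le b$.

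For the second assertion, I would assume in addition that $a$ is an atom and analyze the element $a \wedge b$. Since $a \wedge b \le a$ and $a$ is an atom, the only possibilities are $a \wedge b = \hat{0}$ or $a \wedge b = a$. If $a \wedge b = a$, then $a \le b$; combined with $a^{\complement} \le b$ from the first part, this forces $\hat{1} = a \vee a^{\complement} \le b$, hence $b = \hat{1}$. If instead $a \wedge b = \hat{0}$, then together with $a \vee b = \hat{1}$ the element $b$ satisfies both defining relations of a complement of $a$, so by uniqueness of complements $b = a^{\complement}$. In either case $b \in \{a^{\complement}, \hat{1}\}$.

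Since the argument is purely a matter of applying distributivity together with the defining properties of complements and atoms, I do not expect a genuine obstacle. The only point requiring care is to invoke the \emph{uniqueness} of the complement, rather than merely its existence, in the atom case; this is precisely where the Boolean (as opposed to merely distributive) hypothesis is used.
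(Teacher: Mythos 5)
Your proof is correct and the main assertion is established by exactly the computation the paper uses: meeting $a \vee b = \hat{1}$ with $a^{\complement}$ and applying distributivity. For the atom case, which the paper dismisses as immediate (the intended reading being that $a^{\complement}$ is then a coatom, so $b \ge a^{\complement}$ forces $b \in \{a^{\complement}, \hat{1}\}$), your explicit case analysis on $a \wedge b$ using uniqueness of complements is a valid and equally elementary alternative.
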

\begin{proof} It is immediate after the following computation:
$$a^{\complement} = a^{\complement} \wedge \hat{1} = a^{\complement} \wedge (a \vee b) = (a^{\complement} \wedge a) \vee (a^{\complement} \wedge b) = a^{\complement} \wedge b. \qedhere $$
\end{proof}

The subset lattice of $\{1,2, \dots, n \}$, with union and intersection, is called the Boolean lattice $\mathcal{B}_n$ of rank $n$. Any finite Boolean lattice is isomorphic to some $\mathcal{B}_n$.  A lattice is called \emph{top} (resp. \emph{bottom}) \emph{Boolean} if its top (resp. bottom) interval is Boolean. We refer to \cite{sta} for more details. 

\begin{proposition} \label{topBn}
A finite distributive lattice is top and bottom Boolean.
\end{proposition}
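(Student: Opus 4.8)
The plan is to prove the bottom half of the statement and then obtain the top half by duality. Since distributivity of a lattice is preserved under reversing the order, the order dual $L^{\mathrm{op}}$ of a finite distributive lattice $L$ is again finite and distributive; its $\hat{0}$ is the $\hat{1}$ of $L$, its atoms are the coatoms of $L$, and the join of its atoms is exactly $t$, the meet of the coatoms of $L$. Hence the bottom interval of $L^{\mathrm{op}}$ is precisely the top interval $[t,\hat{1}]$ of $L$. So once I show that a finite distributive lattice is bottom Boolean, applying this to $L^{\mathrm{op}}$ shows it is top Boolean, and it suffices to treat $[\hat{0},b]$.

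First I would record that in any distributive lattice a complement, when it exists, is unique: if $a \wedge y = a \wedge z = \hat{0}$ and $a \vee y = a \vee z = \hat{1}$, then the computation $y = y \wedge (a \vee z) = (y \wedge a) \vee (y \wedge z) = y \wedge z$ together with its symmetric version forces $y = z$ (this is the argument already used in Lemma \ref{compl}). Therefore, to prove that $[\hat{0},b]$ is Boolean I only have to produce complements, since $[\hat{0},b]$, being an interval of $L$, is itself distributive and bounded, with minimum $\hat{0}$ and maximum $b$.

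The structural step is to show every $x \in [\hat{0},b]$ is a join of atoms of $L$. Let $a_1,\dots,a_n$ be the atoms, so $b = a_1 \vee \cdots \vee a_n$. Using $x \le b$ and distributivity,
$$ x = x \wedge b = x \wedge \Big(\bigvee_i a_i\Big) = \bigvee_i (x \wedge a_i), $$
and each $x \wedge a_i \le a_i$ equals $\hat{0}$ or $a_i$ because $a_i$ is an atom; hence $x = \bigvee_{i \in S} a_i$ with $S = \{i : a_i \le x\}$. For the complement I would then set $x^{\complement} = \bigvee_{i \notin S} a_i$. Clearly $x \vee x^{\complement} = \bigvee_i a_i = b$, the top of the interval, while distributivity gives $x \wedge x^{\complement} = \bigvee_{i \in S,\, j \notin S} (a_i \wedge a_j)$, and each term vanishes since two distinct atoms meet in $\hat{0}$. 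Thus $x^{\complement}$ is a complement of $x$ in $[\hat{0},b]$, and by the uniqueness above it is the unique one, so $[\hat{0},b]$ is Boolean.

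The only delicate points, and where I would be most careful, are the two facts that make the meet of distinct atoms trivial and that the atoms of the interval are exactly the $a_i$: an atom $a_i$ of $L$ lies below $b$ and remains minimal in $[\hat{0},b]$, and for $i \ne j$ the element $a_i \wedge a_j$ is below the atom $a_i$, so it is $\hat{0}$ (it cannot be $a_i$, else $a_i \le a_j$ would force $a_i = a_j$). Everything else is a direct consequence of distributivity, so I do not expect a serious obstacle beyond organizing this bookkeeping and the duality reduction cleanly.
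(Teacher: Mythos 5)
Your proof is correct, but it takes a genuinely different route from the paper. The paper gives no argument of its own: it cites Stanley (items a--i, pp.~254--255), where the result is obtained via Birkhoff's representation theorem, i.e.\ the embedding of a finite distributive lattice into some $\mathcal{B}_n$ through its poset of join-irreducibles. Your argument is elementary and self-contained: you reduce the top case to the bottom case by passing to the order dual $L^{\mathrm{op}}$ (legitimate, since distributivity, boundedness and the two complement conditions are all self-dual; note only that the bottom interval of $L^{\mathrm{op}}$ is the order \emph{dual} of $[t,\hat{1}]$, so you are implicitly invoking the immediate fact that the dual of a Boolean lattice is Boolean), then show by distributivity that every $x \le b$ is the join of the atoms below it, exhibit the complement $\bigvee_{i \notin S} a_i$ explicitly, and get uniqueness of complements from the same computation as in Lemma \ref{compl}. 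All steps check out, including the edge cases (distinct atoms meet in $\hat{0}$; the interval $[\hat{0},b]$ is itself distributive and bounded). What the citation buys the paper is brevity and a pointer to the structural theory, since Birkhoff's theorem describes the whole lattice rather than just its extremal intervals; what your proof buys is independence from that machinery, using nothing beyond the definitions already stated in the section, which fits the paper's declared aim of being ``quite self-contained.'' Your argument could replace the citation verbatim.
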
 
\begin{proof}
See \cite[items a-i p254-255]{sta} which uses Birkhoff's representation theorem (a finite lattice is distributive if and only if it embeds into some $\mathcal{B}_n$).
\end{proof} 

\section{Ore's theorem on intervals of finite groups}

We will give our short alternative proof of Theorem \ref{oreintro} by extending it to any top Boolean interval (see Proposition \ref{topBn}). The proof of the second claim below is different from that of \cite[Theorem 2.5]{p1}, for being a correct translation of the proof of Theorem \ref{mainsubfactor}. This single variation reveals how an extension of \cite{p1} was possible. 

 \begin{definition}  \label{Hcy}
An interval of finite groups $[H,G]$ is called \emph{$H$-cyclic} if there is $g \in G$ such that $\langle Hg \rangle = G$. \emph{Note that $\langle Hg \rangle = \langle H,g \rangle$}.
\end{definition}

\begin{theorem} \label{ore}
A top Boolean interval $ [H,G] $ is $H$-cyclic.
\end{theorem}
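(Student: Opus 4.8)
The plan is to reduce the statement to a covering problem and then write down an explicit generator. Let $M_1,\dots,M_n$ be the coatoms of $[H,G]$, i.e.\ the maximal subgroups of $G$ containing $H$; since $t=\bigwedge_i M_i$ lies below each of them, they are precisely the coatoms of the top interval $[t,G]$, which by hypothesis is the Boolean lattice $\mathcal{B}_n$. The key reduction is that, for $g\in G$, the subgroup $\langle H,g\rangle$ belongs to $[H,G]$ and equals $G$ if and only if it is contained in no coatom, that is, if and only if $g\notin M_i$ for every $i$ (any proper overgroup of $H$ sits inside some maximal one). Thus it suffices to produce an element of $G\setminus\bigcup_{i=1}^n M_i$.

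First I would extract, for each coatom, a witness element from the Boolean structure. In $[t,G]\cong\mathcal{B}_n$ the meet $A_i:=\bigwedge_{j\ne i}M_j=\bigcap_{j\ne i}M_j$ is the atom complementary to $M_i$ (cf.\ Lemma \ref{compl}); in particular $A_i\wedge M_i=\bigcap_{\text{all }j}M_j=t$ while $A_i\supsetneq t$, so $A_i\not\subseteq M_i$ and I may choose $a_i\in A_i\setminus M_i$. By construction $a_i\in A_i\subseteq M_j$ for every $j\ne i$, so the single element $a_i$ lies inside each coatom except $M_i$. Here the Boolean hypothesis is used only through the strict inequality $\bigcap_{j\ne i}M_j\supsetneq\bigcap_{\text{all }j}M_j$, i.e.\ through the fact that deleting one coatom from the full meet genuinely enlarges it.

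Finally I would set $g:=a_1a_2\cdots a_n$ and check that $g\notin M_i$ for each $i$. Writing $g=(a_1\cdots a_{i-1})\,a_i\,(a_{i+1}\cdots a_n)$ and using that both bracketed factors are products of elements of $M_i$, one gets $g=m\,a_i\,m'$ with $m,m'\in M_i$; hence $g\in M_i a_i M_i$, which is disjoint from $M_i$ precisely because $a_i\notin M_i$. Therefore $g$ avoids all the coatoms and $\langle H,g\rangle=G$, establishing $H$-cyclicity. I expect the main obstacle to be exactly this last verification: since the coatoms need not be normal, one cannot pass to a quotient and argue componentwise, and the naive hope of choosing the $a_i$ inside pairwise commuting subgroups fails. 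The double-coset observation $M_i a_i M_i\cap M_i=\emptyset$ is what sidesteps non-normality, and the whole argument hinges on the Boolean hypothesis solely through the existence of the witnesses $a_i$.
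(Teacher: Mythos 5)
Your proof is correct, but it is genuinely different from the one in the paper. You avoid induction altogether: from the Boolean top interval you extract, for each coatom $M_i$, a witness $a_i \in \bigcap_{j\ne i}M_j \setminus M_i$ (possible since $\bigcap_{j\ne i}M_j$ is the complement of $M_i$, hence strictly above $t=\bigcap_j M_j$), and then the explicit element $g=a_1\cdots a_n$ avoids every $M_i$ by the double-coset observation $M_i a_i M_i\cap M_i=\emptyset$, which is valid because $M_i$ is a subgroup (if $ma_im'\in M_i$ then $a_i=m^{-1}(ma_im')m'^{-1}\in M_i$). All the supporting steps check out: the coatoms of $[H,G]$ and of $[t,G]$ coincide since $H\le t$, and $\langle H,g\rangle=G$ iff $g$ lies in no coatom. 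The paper instead argues by induction on the rank of the Boolean lattice: it picks an atom $K$, takes $g$ with $\langle K,g\rangle=G$, decomposes the coset $Kg$ into $H$-cosets $Hg_i$, uses Lemma \ref{compl} to force each $\langle Hg_i\rangle\in\{K^{\complement},G\}$, and derives a contradiction from the join if all equal $K^{\complement}$; a separate claim reduces the top Boolean case to the Boolean case. Your argument is shorter, constructive (it exhibits a generator), and, as you note, uses Booleanness only through the strict inequality $\bigcap_{j\ne i}M_j\supsetneq\bigcap_j M_j$, so it proves a formally more general group-theoretic statement. What the paper's less direct route buys is translatability: its coset-partition/complement argument is designed to be a word-for-word template for the planar-algebra Theorem \ref{mainsubfactor} (cosets become minimal projections, the partition of $Kg$ becomes a spectral decomposition of $r_b(x)$), whereas your construction leans on multiplication and cancellation of group elements inside double cosets, operations with no direct analogue for coproducts of minimal projections in $\mathcal{P}_{2,+}$; so your proof, while cleaner for groups, could not serve as the blueprint for the paper's main theorem.
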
 
\begin{proof}
The proof follows from the claims below.
\\
\begin{claim} \label{max} Let $ M $ be a maximal subgroup of $ G $. Then $ [M,G] $ is $ M $-cyclic.
\end{claim}
\begin{claimproof}
For $ g \in G $ with $ g \not \in M $, we have $ \langle  M,g \rangle = G $ by maximality.
\end{claimproof}
\\
\begin{claim} \label{preore2}
A Boolean interval $ [H,G] $ is $ H $-cyclic.
\end{claim} 
\begin{claimproof}
Let $ K $ be an atom in $ [H,G] $. By induction on the rank of the Boolean lattice (initiated by the previous claim), we can assume $ [K,G] $ to be $K$-cyclic, i.e. there is $g \in G$ such that $ \langle  K,g \rangle = G $. Now, for all $g' \in Kg$ we have $$ \langle  K,g \rangle = \langle  Kg \rangle = \langle  Kg' \rangle  = \langle  K,Hg' \rangle. $$ But $Kg$ decomposes into a finite partition of $H$-cosets $Hg_i$ with $i=1, \dots, |K:H|$. It follows that for all $i$, we have $ K \vee \langle  Hg_i \rangle = G, $ and so $\langle  Hg_i \rangle \in \{K^{\complement},G\}$ by Lemma \ref{compl}. If for all $i$ we have $\langle  Hg_i \rangle = K^{\complement}$, then $$ G = \langle  Kg \rangle = \langle  \bigsqcup_i Hg_i \rangle = \bigvee_i \langle  Hg_i \rangle = \bigvee_i K^{\complement} = K^{\complement},$$ which is a contradiction. So there is $i$ such that $\langle  Hg_i \rangle = G$. The result follows.
\end{claimproof} 
\\
\begin{claim} \label{topred} 
 $ [H , G] $ is $ H $-cyclic if its top interval $ [K,G] $ is $K$-cyclic.
\end{claim}
\begin{claimproof}
Consider $ g \in G $ with $ \langle  K,g \rangle = G $. For any coatom $ M \in [H,G] $, we have $ K \subseteq M $ by definition, and so $ g \not \in M $, then a fortiori $ \langle  H,g \rangle \not \subseteq M $. It follows that $ \langle  H,g \rangle=G $.
\end{claimproof} 
\end{proof}

\noindent The converse is false because $\langle S_2, (1234) \rangle = S_4$ whereas $[S_2,S_4]$ is not top Boolean.

\section{Biprojections and basic results} \label{bipro}
For the notions of subfactor, subfactor planar algebra and basic properties, we refer to \cites{js,jo4,sk2}. See also \cite[Section 3]{p0} for a short introduction. Let $(N \subseteq M)$ be a finite index irreducible subfactor. The $n$-box spaces $\mathcal{P}_{n,+}$ and $\mathcal{P}_{n,-}$ of the planar algebra $\mathcal{P}=\mathcal{P}(N \subseteq M)$, are $N' \cap M_{n-1}$ and $M' \cap M_{n}$. A \emph{projection} is an operator $p$ such that $p=p^2=p^{\star}$. Let $N \subseteq K \subseteq M$ be an intermediate subfactor. Then, the \emph{Jones projection} $e^M_K: L^2(M) \to L^2(K)$ is an element of $\mathcal{P}_{2,+}$. Consider $e_1 := e^M_N$ and $\id:=e^M_M $ the identity. Note that $\tr(\id) = 1$ and $\tr(e_1) = |M:N|^{-1} = \delta^{-2}$. Let $\langle a \vert b \rangle:= \tr(b^{\star}a)$ be the inner product of $a$ and $b \in \mathcal{P}_{2,\pm}$. Let $\mathcal{F}: \mathcal{P}_{2,\pm} \to  \mathcal{P}_{2,\mp}$ be the \emph{Fourier transform}, and let $a*b$ be the \emph{coproduct} of $a$ and $b$. Then $a * b = \mathcal{F}(\mathcal{F}^{-1}(a) \mathcal{F}^{-1}(b)).$ Note that $a*e_1 = e_1 * a = \delta^{-1}a$ and $a * \id = \id * a = \delta \tr(a) \id$. Let $\overline{a}:=\mathcal{F}(\mathcal{F}(a))$ be the \emph{contragredient} of $a$. Let $R(a)$ be the range projection of $a$.  We define the relations $a \preceq b$ and $a \sim b$ by $R(a) \le R(b)$ and $R(a) = R(b)$, respectively.
\begin{lemma}    \label{pre2}
Let $p,q \in \mathcal{P}_{2,+}$ be projections. Then  $$e_1 \preceq p *\overline{q} \Leftrightarrow pq \neq 0.$$ 
\end{lemma}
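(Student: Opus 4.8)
The plan is to reduce both sides of the equivalence to a single trace condition and to link them through a coproduct--trace adjunction. Concretely, I would prove the chain
$$ pq \neq 0 \iff \tr(pq) \neq 0 \iff \langle p * \overline q, e_1\rangle \neq 0 \iff e_1 \preceq p * \overline q. $$
The first equivalence is elementary: since $p,q$ are projections, $\tr(pq) = \tr(pqp) \ge 0$, and $\tr(pqp) = 0$ forces $pqp = 0$, hence $qp = 0$ and $pq = 0$; thus $pq \neq 0 \iff \tr(pq) \neq 0$.

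The middle equivalence is the computational heart, and I would obtain it from the identity $\langle p * \overline q, e_1 \rangle = \delta^{-1}\tr(pq)$. For this I use a Frobenius reciprocity for the coproduct of the form $\langle a * b, c\rangle = \langle a, c * \overline b\rangle$ (which follows from the planar definition $a*b=\mathcal{F}(\mathcal{F}^{-1}(a)\mathcal{F}^{-1}(b))$), together with the normalisations recorded before the statement. Taking $a = p$, $b = \overline q$, $c = e_1$, using that the contragredient is an involution ($\overline{\overline q} = q$), that $e_1 * q = \delta^{-1} q$, and that $q=q^{\star}$, one computes
$$ \langle p * \overline q, e_1\rangle = \langle p, e_1 * \overline{\overline q}\rangle = \langle p, e_1 * q\rangle = \delta^{-1}\langle p, q\rangle = \delta^{-1}\tr(pq). $$
Since $\delta \neq 0$, this yields $\tr(pq)\neq 0 \iff \langle p*\overline q, e_1\rangle \neq 0$.

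For the last equivalence I would invoke positivity. The contragredient $\overline q$ of a projection is again a projection, and the coproduct of two positive operators is positive, so $y := p * \overline q \ge 0$. As $e_1$ is self-adjoint, $\langle y, e_1\rangle = \tr(e_1 y)$, and for $y\ge 0$ one has $\tr(e_1 y) = 0 \iff e_1 y e_1 = 0 \iff e_1 R(y) = 0 \iff e_1 \perp R(y)$. Moreover $e_1$ is a minimal projection of $\mathcal P_{2,+}$ (by irreducibility, $e_1 \mathcal P_{2,+} e_1 = \mathbb{C} e_1$). If $e_1 \le R(y)$ then, working in the finite-dimensional algebra $\mathcal P_{2,+}$, we have $y \ge c\,R(y) \ge c\,e_1$ for some $c>0$, whence $\tr(e_1 y) > 0$; this already delivers the implication $e_1 \preceq p*\overline q \Rightarrow pq \neq 0$.

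The hard part will be the converse, namely upgrading $e_1 \not\perp R(y)$ to $e_1 \le R(y)$. This is \emph{not} automatic for an arbitrary positive operator against a minimal non-central projection: writing $e_1 R(y) e_1 = \mu e_1$, one could a priori have $0 < \mu < 1$, in which case $e_1 \not\perp R(y)$ but $e_1 \not\le R(y)$. The decisive point is that $y = p * \overline q$ is a \emph{coproduct}, and the appearance of the minimal projection $e_1$ in the support of a coproduct is an all-or-nothing phenomenon -- morally the statement that the identity object embeds in $p \otimes \overline q$ iff $\operatorname{Hom}(q,p)\neq 0$, i.e. Frobenius reciprocity at the level of range projections rather than of traces. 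To make this rigorous I would lean on the structural results of Liu on the support and range projections of coproducts (see Section \ref{bipro}), which force $\mu \in \{0,1\}$ and thereby close the outstanding implication $pq \neq 0 \Rightarrow e_1 \preceq p * \overline q$.
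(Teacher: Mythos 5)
Your reduction chain is the natural one, and most of its links are sound: $pq\neq 0\Leftrightarrow \tr(pq)\neq 0$, the Frobenius-reciprocity computation $\langle p*\overline{q},e_1\rangle=\delta^{-1}\tr(pq)$, and the forward implication $e_1\preceq p*\overline{q}\Rightarrow \tr(pq)\neq 0$ (using positivity of the coproduct, Lemma \ref{th}(1)) are all correct. But the proposal is genuinely incomplete at exactly the step you yourself flag as ``the hard part'': the implication $pq\neq 0\Rightarrow e_1\preceq p*\overline{q}$ is never proved. You reduce it to an all-or-nothing dichotomy for $e_1$ against $R(p*\overline{q})$ and then defer that dichotomy to unspecified ``structural results of Liu on the support and range projections of coproducts''. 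No such result is identified, and nothing quoted in Section \ref{bipro} (positivity of $a*b$, monotonicity of ranges under $*$) delivers it. Since this implication is the entire content of the lemma, what you have is a correct setup with the key step missing, not a proof.

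Moreover, your diagnosis of \emph{why} the dichotomy holds points at the wrong mechanism: it has nothing to do with $y=p*\overline{q}$ being a coproduct, and everything to do with what irreducibility says about $e_1$ --- which is precisely the content of the paper's one-line proof. Irreducibility gives strictly more than the minimality $e_1\mathcal{P}_{2,+}e_1=\mathbb{C}e_1$ that you invoke (minimality already follows from factoriality of $N$ alone, and, as your $M_2(\mathbb{C})$ example shows, is insufficient). The stronger statement is that $e_1$ is a minimal \emph{central} projection: for any $x\in\mathcal{P}_{2,+}$, multiplying by the Jones projection $e_1$ caps off a pair of strings of $x$, leaving a $1$-box, and $\mathcal{P}_{1,+}=\mathbb{C}$ forces $xe_1=e_1x=c(x)\,e_1$ with $c(x)\in\mathbb{C}$ (equivalently: by irreducibility the trivial $N$-$N$-bimodule has multiplicity one in $L^2(M)$, and $e_1$ is its isotypic projection, hence central with one-dimensional corner). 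This kills your $0<\mu<1$ scenario for \emph{any} positive $y$, coproduct or not: writing $R(y)e_1=\mu e_1$ and applying $R(y)$ once more gives $\mu^2=\mu$, so $\mu\in\{0,1\}$; thus either $e_1\le R(y)$, or $R(y)e_1=0$ and hence $ye_1=0$ and $\tr(ye_1)=0$. Plugged into your (correct) trace identity, this closes the converse and completes the argument.
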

\begin{proof} The result follows by irreducibility (i.e. $\mathcal{P}_{1,+}=\mathbb{C}$).
\end{proof}
\noindent Note that if $p \in \mathcal{P}_{2,+}$ is a projection then $\overline{p}$ is also a projection.

\begin{definition}[\cites{bi,la,li}] \label{debi}
A \emph{biprojection} is a projection $b \in \mathcal{P}_{2,+} \setminus \{ 0\}$ with $\mathcal{F}(b)$ a multiple of a projection. 
\end{definition}
\noindent Note that $e_1=e^M_N$ and $\id=e^M_M$ are biprojections.
\begin{theorem}[\cite{bi} p212] \label{bisch}
A projection $b \in \mathcal{P}_{2,+}$ is a biprojection if and only if it is the Jones projection $e^M_K$ of an intermediate subfactor $N \subseteq K \subseteq M$. 
\end{theorem}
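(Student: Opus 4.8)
The plan is to prove both implications through a single bridge: the Fourier transform $\mathcal F$ interchanges the ordinary product of $\mathcal P_{2,+}$ with the coproduct $*$, so for a projection $b$ the biprojection condition ``$\mathcal F(b)$ is a multiple of a projection'' is equivalent to the coproduct relation $b*b\in\mathbb C b$. Indeed, if $\mathcal F^{-1}(b)=\lambda p$ with $p$ a projection then $\mathcal F^{-1}(b)^2=\lambda\,\mathcal F^{-1}(b)$, and the defining formula $a*b=\mathcal F(\mathcal F^{-1}(a)\mathcal F^{-1}(b))$ gives $b*b=\lambda b$; the converse is the same computation read backwards, and one passes between $\mathcal F(b)$ and $\mathcal F^{-1}(b)=\mathcal F(\overline b)$ using that contragredience preserves projections. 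The geometric meaning of $b*b\in\mathbb C b$ is that the range subspace $\operatorname{ran}(b)\subseteq L^2(M)$ is stable under the multiplication inherited from $M$, and this is the content that will match up with ``$K$ is an algebra''.

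For the easy implication, suppose $b=e^M_K$ for an intermediate subfactor $N\subseteq K\subseteq M$. Then $b$ is the orthogonal projection of $L^2(M)$ onto $L^2(K)$; it lies in $N'\cap M_1=\mathcal P_{2,+}$ since $N\subseteq K$ forces $b$ to commute with $N$, and $b\in M_1$. As $K$ is an algebra, $L^2(K)\cdot L^2(K)\subseteq L^2(K)$; translating this closure through the coproduct (and using the bi-invariance of $*$, together with the transparent cases $e_1*e_1=\delta^{-1}e_1$ and $\id*\id=\delta\,\id$ coming from the stated identities) yields $e^M_K*e^M_K\in\mathbb C\,e^M_K$. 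By the bridge, $\mathcal F(e^M_K)$ is a multiple of a projection, so $e^M_K$ is a biprojection.

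For the hard implication, let $b$ be a biprojection. I would first record the normalization $e_1\le b\le\id$ valid for every biprojection: Lemma \ref{pre2} with $p=q=b$ gives $e_1\preceq b*\overline b$ (as $b^2=b\ne0$), and the identities $b*e_1=\delta^{-1}b$, $b*\id=\delta\tr(b)\,\id$ pin down the extremes and deliver $e_1\le b\le\id$ after a short computation. Viewing $b$ as a projection on $L^2(M)$, set $V:=\operatorname{ran}(b)$ and $K:=\{x\in M:\hat x\in V\}$. Since $b\in N'$, the subspace $V$ is invariant under left multiplication by $N$, and $e_1\le b$ gives $L^2(N)\subseteq V$, so $N\subseteq K$. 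The biprojection hypothesis, in the form $b*b\in\mathbb C b$, says exactly that $V$ is closed under the product of $M$; hence $K$ is a $*$-subalgebra, weakly dense in an intermediate von Neumann algebra whose $L^2$-completion is $V$. By irreducibility $Z(K)=K'\cap K\subseteq N'\cap M=\mathbb C$, so $K$ is a factor and $N\subseteq K\subseteq M$ is an intermediate subfactor; and $b$, being the projection onto $V=L^2(K)$, equals $e^M_K$.

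The hard part will be making the bridge rigorous in the reconstruction: upgrading the algebraic relation $b*b\in\mathbb C b$ to the geometric statement that $\operatorname{ran}(b)$ is multiplicatively closed and is the $L^2$-closure of a von Neumann subalgebra carrying the conditional expectation $b$. This demands the explicit identification of the coproduct of two $2$-boxes with the operation on $L^2(M)$ induced by the product of $M$, realized through the picture of $\mathcal P_{2,+}$ as $N$--$N$ bimodule endomorphisms of $L^2(M)$, followed by the boundedness and weak-density arguments needed to produce the algebra $K$. By contrast, the normalization $e_1\le b\le\id$ and the factoriality of $K$ are routine once Lemma \ref{pre2} and irreducibility are in hand.
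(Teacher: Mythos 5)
The first thing to note is that the paper does not prove Theorem \ref{bisch} at all: it is quoted verbatim from Bisch \cite{bi}, just as the companion characterization Theorem \ref{biproj} is quoted from Landau \cite{la} and Liu \cite{li}. So the only question is whether your blind reconstruction stands on its own, and it does not: the steps you defer are not peripheral technicalities, they are the content of the theorem.

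First, your ``bridge'' is not an equivalence as stated. From $b*b=\lambda b$ you get $\mathcal{F}^{-1}(b)^2=\lambda\,\mathcal{F}^{-1}(b)$, i.e.\ that $\mathcal{F}^{-1}(b)$ is a multiple of an \emph{idempotent}. To upgrade this to a multiple of a \emph{projection} you need self-adjointness, and since $(\mathcal{F}^{-1}(b))^{\star}=\mathcal{F}(b^{\star})=\mathcal{F}(b)$, that is exactly the condition $\overline{b}=b$, which you never establish; the remark that ``contragredience preserves projections'' does not touch this point. This is precisely why Theorem \ref{biproj} lists $e_1\le b$ and $b=\overline{b}$ alongside $b\sim b*b$: those conditions are not automatic consequences of the coproduct relation. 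The same omission undermines your ``normalization'': Lemma \ref{pre2} with $p=q=b$ gives $e_1\preceq b*\overline{b}$, but to conclude $e_1\le b$ you must identify the range of $b*\overline{b}$ with $b$, which again requires $\overline{b}=b$ together with $b*b\sim b$; no ``short computation'' from $b*e_1=\delta^{-1}b$ and $b*\id=\delta\tr(b)\id$ delivers it.

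Second — the gap you flag yourself, and the decisive one — every implication in your outline routes through the unproven identification of the coproduct of $2$-box projections with multiplication of their range $N$-$N$-bimodules inside $L^2(M)$: in the easy direction to turn multiplicative closedness of $K$ into $e^M_K*e^M_K\in\mathbb{C}\,e^M_K$, and in the hard direction to turn $b*b\in\mathbb{C}b$ into multiplicative closedness of $\im(b)$, followed by the boundedness and weak-density arguments producing a von Neumann algebra $K$ with $L^2(K)=\im(b)$ and $b=e^M_K$ (note also that $K$ being $\star$-closed needs $\im(b)$ stable under the modular conjugation, i.e.\ once more $\overline{b}=b$). That identification plus the reconstruction of $K$ \emph{is} Bisch's theorem; deferring it to ``the hard part'' means neither direction has actually been proved. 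For the easy direction there is in any case a cleaner route that needs no bridge: compute $\mathcal{F}(e^M_K)$ directly and recognize it as a positive multiple of a Jones projection. As it stands, your proposal is a reasonable strategic outline of the Bisch--Landau reconstruction, but it is an outline, with one essential hypothesis ($\overline{b}=b$) silently assumed and the core analytic step left unexecuted.
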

\noindent Then, the set of biprojections is a finite lattice \cite{wa}, of the form $[e_1,\id]$.
 \begin{theorem} \label{biproj} An operator $b \in \mathcal{P}_{2,+}$ is a biprojection if and only if
$$e_1 \le b=b^2=b^{\star}=\overline{b} \sim b * b.$$
Moreover, $b*b = \delta \tr(b)b$.
\end{theorem}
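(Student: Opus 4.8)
The plan is to route everything through the Fourier transform $\mathcal{F}$, using the standard facts that $\mathcal{F}^4 = \id$ on $\mathcal{P}_{2,+}$, that $\overline{x} = \mathcal{F}^2(x)$, that $\mathcal{F}$ is compatible with the adjoint via $\mathcal{F}(x)^{\star} = \mathcal{F}^{-1}(x^{\star})$, and the defining relation $\mathcal{F}^{-1}(x*y) = \mathcal{F}^{-1}(x)\,\mathcal{F}^{-1}(y)$; I will also use that the coproduct preserves positivity and that $\tr(x*y) = \delta\,\tr(x)\,\tr(y)$ (consistent with $x*\id = \delta\tr(x)\id$ and $x*e_1 = \delta^{-1}x$). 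The first observation is that, as soon as $b = b^{\star} = \overline{b}$, the element $f := \mathcal{F}(b)$ is automatically self-adjoint: from $\overline{b}=b$, i.e. $\mathcal{F}^2(b)=b$, I get $\mathcal{F}(b) = \mathcal{F}^{-1}(b)$, hence $f^{\star} = \mathcal{F}(b)^{\star} = \mathcal{F}^{-1}(b^{\star}) = \mathcal{F}^{-1}(b) = f$. Moreover $b = \mathcal{F}(f) = \mathcal{F}^{-1}(f)$, and the coproduct reads $b * b = \mathcal{F}(\mathcal{F}^{-1}(b)^2) = \mathcal{F}(f^2)$.

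For the reverse implication, suppose $e_1 \le b = b^2 = b^{\star} = \overline{b} \sim b*b$ with $b \neq 0$. Since $\mathcal{F}$ is a linear bijection and $b = \mathcal{F}(f)$, $b*b = \mathcal{F}(f^2)$, the proportionality $b*b = \mu b$ is equivalent to $f^2 = \mu f$. As $f$ is self-adjoint and $f \neq 0$ (because $b \neq 0$), the relation $f^2 = \mu f$ with $\mu \neq 0$ says exactly that $p := \mu^{-1} f$ is a self-adjoint idempotent, i.e. a projection, so that $\mathcal{F}(b) = f = \mu p$ is a nonzero multiple of a projection and $b$ is a biprojection. Thus the entire reverse direction reduces to upgrading the range equality $b \sim b*b$ to the genuine proportionality $b*b = \mu b$.

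This upgrade is the main obstacle. The two operators $b$ and $b*b$ are only assumed to share a range projection, and a priori positive operators with equal range need not be proportional; what saves the day is the positivity and extremality of the coproduct. I would invoke Liu's analysis of ranges of coproducts \cite{li}: for a projection $b$ with $e_1 \le b$ the element $b*b$ is positive with $R(b) \le R(b*b)$, and in the extremal case of equality $R(b*b)=R(b)$ the positive operator $b*b$ is forced to be a scalar on $\mathrm{ran}(b)$, i.e. $b*b = \mu b$ with $\mu > 0$. Granting this, the scalar is pinned down by a trace computation: $\mu\,\tr(b) = \tr(b*b) = \delta\,\tr(b)^2$, whence $\mu = \delta\,\tr(b)$, which is precisely the ``moreover'' statement $b*b = \delta\tr(b)\,b$.

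For the forward implication I would start from Theorem \ref{bisch}, identifying a biprojection $b$ with the Jones projection $e^M_K$ of an intermediate subfactor $N \subseteq K \subseteq M$. Then $b$ is a projection by construction, $e_1 = e^M_N \le e^M_K = b$ by monotonicity of Jones projections, and $\overline{b}=b$ is the standard self-contragredience of a Jones projection (equivalently, reading $\mathcal{F}(b)=\mu p$ off the definition and checking $\mathcal{F}^2(b)=b$). With $b = b^{\star} = \overline{b}$ available, the computation of the first paragraph applies: $\mathcal{F}(b)=\mu p$ gives $f^2 = \mu f$, hence $b*b = \mathcal{F}(f^2) = \mu\,\mathcal{F}(f) = \mu b$, so in particular $R(b*b)=R(b)$, i.e. $b \sim b*b$. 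This closes the equivalence, the ``moreover'' having already been produced along the way.
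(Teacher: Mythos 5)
The paper's own ``proof'' of this theorem is a bare citation (``See \cite[items 0-3 p191]{la} and \cite[Theorem 4.12]{li}''), so the relevant question is whether your argument stands on its own, and it does not: the reverse direction breaks exactly at the step you yourself flag as ``the main obstacle.'' You correctly reduce everything to upgrading the range equality $b \sim b*b$ to the proportionality $b*b = \mu b$, but the justification you offer --- that ``positivity and extremality'' force a positive operator whose range equals $\mathrm{ran}(b)$ to be a scalar multiple of $b$ --- is not a valid principle: $\mathrm{diag}(1,2,0)$ and $\mathrm{diag}(1,1,0)$ are positive, have the same range, and are not proportional. When the forcing does hold, it comes from the planar-algebraic structure of the coproduct, not from soft operator theory. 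The group model $\mathcal{P}_{2,+}(R \subseteq R \rtimes G) \simeq \mathbb{C}^G$ makes this visible: there $b = 1_S$ with $e \in S = S^{-1}$, the coproduct is (up to normalization) convolution, $b \sim b*b$ says $S \cdot S = S$, i.e.\ $S$ is a subgroup, and proportionality of $1_S * 1_S$ to $1_S$ is the statement that every $g \in S$ has exactly $|S|$ factorizations $g = st$ with $s,t \in S$ --- a fact one proves with the group law, not with positivity.

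There is also a misattribution hiding in that step. What \cite{li} proves about ranges of coproducts is the content of Lemma \ref{th}(2) of the paper (Liu's Lemma 4.8): $R(a*c)$ depends only on $R(a)$ and $R(c)$, monotonely; it yields $R(b) \le R(b*b)$ but no proportionality in the extremal case. The statement you actually invoke --- that a projection $b \ge e_1$ with $b = b^\star = \overline{b}$ and $R(b*b) = R(b)$ satisfies $b*b = \mu b$ and is a biprojection --- \emph{is} the hard direction of Liu's Theorem 4.12, which is precisely the citation by which the paper disposes of the present theorem; so your reverse implication is circular in effect. By contrast, your forward direction and the ``moreover'' computation are fine (Theorem \ref{bisch}, the standard Jones-projection facts $e_1 \le e^M_K = \overline{e^M_K}$, the identity $\tr(x*y) = \delta\tr(x)\tr(y)$, and your Fourier bookkeeping are all correct), and are in fact more self-contained than the paper's one-line citation. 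If you want to stay within the paper's toolkit, a cleaner repair of the reverse direction is: from $b \sim b*b$ and Lemma \ref{th}(2) deduce $b^{*k} \sim b$ for all $k$, hence $\langle b \rangle = b$ by Definition \ref{gener}, which is a biprojection by \cite[Lemma 4.14]{li}; but that lemma sits after Theorem 4.12 in Liu's paper, so one would still have to check that no circularity is smuggled in --- the genuine content of this theorem cannot be recovered without Liu's actual argument.
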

\begin{proof} See \cite[items 0-3 p191]{la} and \cite[Theorem 4.12]{li}.
\end{proof}

\begin{lemma} \label{prodcoprod} Consider $a_1,a_2,b \in \mathcal{P}_{2,+}$ with $b$ a biprojection, then 
$$(b \cdot a_1 \cdot b) * (b \cdot a_2 \cdot b) = b \cdot (a_1 * (b \cdot a_2 \cdot b)) \cdot b = b \cdot ((b \cdot a_1  \cdot b) * a_2) \cdot b$$
$$(b*a_1*b) \cdot (b*a_2*b) = b*(a_1 \cdot (b*a_2*b))*b = b*((b*a_1*b) \cdot a_2)*b$$  
\end{lemma}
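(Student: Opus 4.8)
The plan is to reduce the four asserted equalities to a single one and then establish that one. The second displayed line is the Fourier dual of the first: since the coproduct is defined by $\mathcal{F}(x\cdot y)=\mathcal{F}(x)*\mathcal{F}(y)$, one has dually $\mathcal{F}(x*y)=\mathcal{F}(y)\cdot\mathcal{F}(x)$ (using $\mathcal{F}^2=\overline{(\cdot)}$ and $\overline{x\cdot y}=\overline{y}\cdot\overline{x}$), and by Definition \ref{debi} together with Theorem \ref{biproj} the element $\mathcal{F}(b)$ is a scalar multiple of a biprojection $b'$. Applying $\mathcal{F}$ to the first line and replacing $b$ by $b'$ (the scalars cancel, each side being homogeneous of the same degree in $b$) turns it into one equality of the second line, up to relabelling the $a_i$. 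Likewise, the two equalities inside a single line are exchanged by the contragredient: applying $\overline{(\cdot)}$ and using $\overline{b}=b$, $\overline{b\cdot a_i\cdot b}=b\cdot\overline{a_i}\cdot b$ and $\overline{x*y}=\overline{y}*\overline{x}$ carries $(b a_1 b)*(b a_2 b)=b\,(a_1*(b a_2 b))\,b$ into $(b c_1 b)*(b c_2 b)=b\,((b c_1 b)* c_2)\,b$ with $c_i=\overline{a_{3-i}}$. Hence the two dualities reduce everything to the single identity
\[(b\cdot a_1\cdot b)*(b\cdot a_2\cdot b)=b\cdot(a_1*(b\cdot a_2\cdot b))\cdot b.\]

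To prove this core identity I would argue diagrammatically. Both sides are the values of explicit $2$-box tangles: the left side feeds the two $b$-sandwiched inputs into the coproduct tangle, while the right side first forms the coproduct of $a_1$ with $b\cdot a_2\cdot b$ and then multiplies by $b$ on the top and the bottom. The content is that the two copies of $b$ flanking $a_1$ on the left can be absorbed into the outer copies of $b$ on the right. This is governed by the biprojection exchange relation of Landau \cite{la}: a biprojection strand may be pushed through the region where a multiplication strand meets the coproduct cap, because $b=b^2=b^{\star}=\overline{b}$ and $b*b=\delta\tr(b)\,b$ (Theorem \ref{biproj}) let two adjacent copies of $b$ merge into one. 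Concretely, I would isotope the left tangle so that the inner right-hand $b$ of the factor $b a_1 b$ sits against the coproduct cap, apply the exchange move to slide it outside the cap where it fuses with the left-hand $b$ of $b a_2 b$, and read off the right-hand side; tracking the shadings and the normalization constant $\delta\tr(b)$ produced by each fusion is the bookkeeping that must be done carefully.

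The group model makes the mechanism transparent and is a useful sanity check: in $\mathcal{P}_{2,+}=\mathbb{C}[G]$ with $\cdot$ pointwise and $*$ convolution, $b$ is the indicator of a subgroup $H$, both sides restrict to functions supported on $H$, and the equality is exactly the computation $\sum_{y\in H}f_1(y)f_2(y^{-1}x)$ valid for $x\in H$ once one uses $HH=H$; here $HH=H$ is the combinatorial shadow of $b*b=\delta\tr(b)\,b$. The main obstacle is making the exchange move rigorous in the abstract planar algebra — that is, justifying that a biprojection can be transported across the coproduct cap — rather than merely in the group case. An equivalent route avoiding pictures is to pair both sides against an arbitrary $c\in\mathcal{P}_{2,+}$ and move the $b$'s around by Frobenius reciprocity for $*$ and $\cdot$ together with $b=b^{\star}=\overline{b}$, but this too ultimately rests on the same single exchange input.
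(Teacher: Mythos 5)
Your proposal is correct and takes essentially the same route as the paper, whose entire proof reads ``By exchange relations \cite{la} on $b$ and $\mathcal{F}(b)$'': the first displayed line is Landau's exchange relation for $b$ and the second is the same relation for the dual biprojection coming from $\mathcal{F}(b)$, which is precisely the key input you invoke. Your reduction of the four equalities to a single core identity via $\mathcal{F}(x\cdot y)=\mathcal{F}(x)*\mathcal{F}(y)$, $\mathcal{F}(x*y)=\mathcal{F}(y)\cdot\mathcal{F}(x)$ and the contragredient is a correct (and somewhat more systematic) bookkeeping of that same $b\leftrightarrow\mathcal{F}(b)$ duality, with the diagrammatic exchange move left at the same level of citation as in the paper itself.
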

\begin{proof} 
By exchange relations \cite{la} on $b$ and $\mathcal{F}(b)$.
    \end{proof}
Now, we define the biprojection generated by a positive operator.
\begin{definition} \label{gener}
Consider $a \in \mathcal{P}_{2,+}$ positive, and let $p_n$ be the range projection of $\sum_{k=1}^n a^{*k}$. By finiteness there exists $N$ such that for all $m \ge N$, $p_m = p_N$, which is a biprojection \cite[Lemma 4.14]{li}, denoted $\langle a \rangle$, called the \emph{biprojection generated} by $a$. It is the smallest biprojection $b \succeq a$. For $S$ a finite set of elements in $\mathcal{P}_{2,+}$, let  $\langle S \rangle $ be $ \langle \sum_{s \in S}R(s) \rangle$. 
 
\end{definition}

\begin{lemma} \label{th} Let $a,b,c,d $ be positive operators of $ \mathcal{P}_{2,+}$. Then
\begin{itemize}
\item[(1)] $a*b$ is also positive, 
\item[(2)] $[a \preceq b$ and $c \preceq d]$ $\Rightarrow$ $a*c \preceq b*d$,
\item[(3)] $ a \preceq b \Rightarrow  \langle a \rangle \le \langle b \rangle$,
\item[(4)] $ a \sim b \Rightarrow  \langle a \rangle = \langle b \rangle$.
\end{itemize} 
\end{lemma}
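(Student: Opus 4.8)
The plan is to prove each of the four items in turn, with items (1) and (2) providing the analytic foundation and items (3), (4) following as essentially formal consequences. For item (1), I would invoke the definition of the coproduct $a*b = \mathcal{F}(\mathcal{F}^{-1}(a)\mathcal{F}^{-1}(b))$ together with the known diagrammatic (or C$^*$-algebraic) fact that the coproduct of two positive operators in $\mathcal{P}_{2,+}$ is again positive; this is a standard property of the Fourier/coproduct machinery on subfactor planar algebras, and I expect to cite it from the references (\cite{la} or \cite{li}) rather than reprove it from the planar calculus. The key reason is that $a*b$ can be realized as a positive-type expression, so its positivity is intrinsic.

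For item (2), the monotonicity of the coproduct under the range-projection order $\preceq$, my approach is to reduce $a \preceq b$ and $c \preceq d$ to a statement about range projections. Since $R(a) \le R(b)$ means $R(b)$ dominates $R(a)$, and since for positive operators $a \sim R(a)$ in the sense that they generate the same range, I would first argue that $a*c$ and $R(a)*R(c)$ have comparable range projections, then use the fact (from the positivity in item (1) and the behaviour of range projections under coproduct) that $R(a) \le R(b)$ implies $R(a*c) \le R(b*c)$, and symmetrically in the second variable. Concretely, writing $b = a + (b-a)$-type decompositions is awkward because $b - a$ need not be positive, so instead I would work directly with range projections: if $R(a) \le R(b)$ then $R(b)$ acts as a unit on the range of $a$, and positivity of the coproduct (item (1)) gives $R(a*c) \le R(b*c)$ after bounding $a*c \le \lambda\, b*c$ for a suitable scalar on the relevant cut-down. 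Chaining the two one-variable monotonicities yields $a*c \preceq b*c \preceq b*d$.

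For items (3) and (4), I would argue purely from the construction of $\langle a \rangle$ in Definition \ref{gener}. Recall $\langle a \rangle$ is the stabilized range projection of the partial sums $\sum_{k=1}^n a^{*k}$. Assuming $a \preceq b$, item (2) applied iteratively gives $a^{*k} \preceq b^{*k}$ for every $k$, hence $R\big(\sum_{k=1}^n a^{*k}\big) \le R\big(\sum_{k=1}^n b^{*k}\big)$ for all $n$ (using that $R$ of a sum of positives is the join of the summands' range projections, which is monotone). Passing to the stabilized limit gives $\langle a \rangle \le \langle b \rangle$, proving (3). Item (4) is then immediate: $a \sim b$ means $a \preceq b$ and $b \preceq a$, so (3) applied both ways forces $\langle a \rangle = \langle b \rangle$.

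The main obstacle I anticipate is item (2), and specifically the passage from the order $a \preceq b$ (a statement only about range projections) to a genuine operator inequality that the coproduct can act on monotonically. The difficulty is that $\preceq$ discards magnitude information, so one cannot simply subtract; the careful point is to show that $R(a*c)$ depends on $a$ only through $R(a)$ up to domination, which requires knowing that $a*c$ and $R(a)*R(c)$ have the same range projection (or at least comparable ones). I expect the cleanest route is to first establish the auxiliary fact $R(a*c) = R(R(a)*R(c))$ for positive $a,c$ — reducing every positive operator to its range projection inside the coproduct — after which (2) reduces to the trivial monotonicity $R(a) \le R(b) \Rightarrow R(a)*R(c) \le R(b)*R(c)$ as positive operators, and items (3)–(4) follow mechanically.
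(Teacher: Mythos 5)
Your proposal is correct, and on items (1), (3) and (4) it is essentially identical to the paper's proof: the paper cites Liu for (1), proves (3) by iterating (2) to get $a^{*k} \preceq b^{*k}$, summing, and invoking Definition \ref{gener}, and notes (4) is immediate from (3) applied in both directions. The one genuine divergence is item (2): the paper does not prove it, citing \cite[Lemma 4.8]{li} outright (just as it cites \cite[Theorem 4.1]{li} for (1)), whereas you sketch a self-contained derivation of (2) from (1). Your derivation is sound, and the hesitation in your final paragraph is unnecessary: since $\mathcal{P}_{2,+}$ is finite dimensional, for positive operators the relation $R(a) \le R(b)$ is \emph{equivalent} to the existence of $\lambda > 0$ with $a \le \lambda b$ (take $\lambda = \|a\|/c$ where $c$ is the smallest nonzero eigenvalue of $b$); then bilinearity of $*$ and item (1) give $\lambda(b*c) - a*c = (\lambda b - a)*c \ge 0$, and for positive operators $x \le \mu y$ forces $R(x) \le R(y)$, so $a*c \preceq b*c$; repeating in the second variable and chaining yields $a*c \preceq b*c \preceq b*d$. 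Your auxiliary identity $R(a*c) = R(R(a)*R(c))$ follows by the same two-sided scalar bounds but is not actually needed. What each route buys: the paper's citation is shorter and defers to Liu's lemma, which is stated in exactly the needed generality; your argument makes the lemma self-contained modulo (1) and makes transparent that the monotonicity (2) is a purely formal consequence of positivity of the coproduct together with finite-dimensionality of the $2$-box space.
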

\begin{proof}
It's precisely \cite[Theorem 4.1 and Lemma 4.8]{li} for (1) and (2). Next, if $a \preceq b$ then by (2), for any integer $k$, $a^{*k} \preceq b^{*k}$, so for any $n$, $$\sum_{k=1}^n a^{*k} \preceq \sum_{k=1}^n b^{*k},$$ then  $\langle a \rangle \le \langle b \rangle$ by Definition \ref{gener}. Finally, (4) is immediate from (3).
\end{proof}

Let $N \subseteq K \subseteq M$ be an intermediate subfactor. The planar algebras $\mathcal{P}(N \subseteq K)$ and $\mathcal{P}(K \subseteq M)$ can be derived from $\mathcal{P}(N \subseteq M)$, see \cites{kcb,la0}. 
\begin{theorem} \label{2iso}
Consider the intermediate subfactors $N \subseteq P \subseteq K \subseteq Q \subseteq M$. Then there are two isomorphisms of von Neumann algebras  $$l_K: \mathcal{P}_{2,+}(N \subseteq K) \to e^M_K \mathcal{P}_{2,+}(N \subseteq M) e^M_K,$$ 
$$r_K : \mathcal{P}_{2,+} (K \subseteq M) \to e^M_K * \mathcal{P}_{2,+}(N \subseteq M) * e^M_K,$$ for usual $+$, $\times$ and $()^{\star}$, such that
 $$l_K(e^K_P) = e^M_P \text{ and } r_K(e^M_Q) = e^M_Q.$$
 Moreover, the coproduct $*$ is also preserved by these maps, but up to a multiplicative constant, $|M:K|^{1/2}$ for $l_K$ and $|K:N|^{-1/2}$ for $r_K$.
 Then, $\forall m \in \{l^{\pm 1}_K, r^{\pm 1}_K \}$, $\forall a_i > 0$ in the domain of $m$, $m(a_i)>0$ and $$\langle m(a_1), \dots, m(a_n) \rangle = m (\langle a_1, \dots, a_n \rangle).$$
\end{theorem}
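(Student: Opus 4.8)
The plan is to assemble the statement from the known planar-algebra constructions of \cites{kcb,la0} together with one careful Fourier/normalization computation for the coproduct, and then to obtain the positivity and generation clauses by a soft argument. I would treat $l_K$ first and derive $r_K$ from it by Fourier duality. Throughout, write $\delta,\delta_1,\delta_2$ for the loop parameters of $(N\subseteq M)$, $(N\subseteq K)$, $(K\subseteq M)$, so that $\delta=\delta_1\delta_2$ with $\delta_1=|K:N|^{1/2}$ and $\delta_2=|M:K|^{1/2}$.

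First I would set up $l_K$. Since $e^M_K$ is a biprojection (Theorem \ref{bisch}), the corner $e^M_K\,\mathcal{P}_{2,+}(N\subseteq M)\,e^M_K$ is a von Neumann algebra with unit $e^M_K$, for the usual $+,\times,(\,)^{\star}$. The restriction construction of \cites{kcb,la0} realizes $\mathcal{P}(N\subseteq K)$ inside $\mathcal{P}(N\subseteq M)$ compressed by $e^M_K$, which at the level of $2$-boxes gives a unital $\star$-isomorphism $l_K$ onto this corner. The identity $l_K(e^K_P)=e^M_P$ is the compatibility of the intermediate Jones projections under this identification: since $P\subseteq K$ one has $e^M_P\le e^M_K$, so $e^M_P=e^M_K e^M_P e^M_K$ already lies in the corner, and I would check that $l_K(e^K_P)$ and $e^M_P$ are both the image of the Jones projection of $P$ inside the restricted algebra, hence equal.

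Next I would obtain $r_K$ by duality. Applying $\mathcal{F}$, the coproduct corner $e^M_K*\mathcal{P}_{2,+}(N\subseteq M)*e^M_K$ becomes an ordinary multiplicative corner of the dual planar algebra, cut by the projection (multiple of) $\mathcal{F}(e^M_K)$; running the $l$-type construction there and transporting back by $\mathcal{F}^{-1}$ produces the $\star$-isomorphism $r_K$ onto $e^M_K*\mathcal{P}_{2,+}(N\subseteq M)*e^M_K$ with $r_K(e^M_Q)=e^M_Q$ for $K\subseteq Q\subseteq M$, using the Bisch--Jones duality exchanging the two halves of an intermediate subfactor. The genuinely delicate point, and the one I expect to be the main obstacle, is the coproduct constant. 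Using $a*b=\mathcal{F}(\mathcal{F}^{-1}(a)\mathcal{F}^{-1}(b))$ and the exchange relations of Lemma \ref{prodcoprod} relative to $e^M_K$, I would compute the $(N\subseteq M)$-coproduct of two corner elements and compare it with the $(N\subseteq K)$-coproduct of their $l_K$-preimages; the two Fourier transforms differ exactly by the factor $\delta/\delta_1=\delta_2=|M:K|^{1/2}$, giving the asserted constant for $l_K$. The dual computation for $r_K$ yields the reciprocal-type normalization $\delta_1^{-1}=|K:N|^{-1/2}$. Keeping three trace normalizations and two Fourier transforms straight is where the real bookkeeping lies.

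Finally I would deduce the positivity and generation clauses formally. Each $m\in\{l_K^{\pm1},r_K^{\pm1}\}$ is a $\star$-isomorphism, so $m(a^{\star}a)=m(a)^{\star}m(a)$ shows $a>0\Leftrightarrow m(a)>0$, and $m$ preserves range projections, $m(R(x))=R(m(x))$. By the coproduct clause there is a positive constant $c$ with $m(a^{*k})=c^{\,k-1}m(a)^{*k}$, so $m(\sum_{k=1}^n a^{*k})=\sum_{k=1}^n c^{\,k-1}m(a)^{*k}$. Since positive scalars do not change range projections and the range projection of a sum of positive operators is the join of their range projections, $R(m(\sum_{k=1}^n a^{*k}))=R(\sum_{k=1}^n m(a)^{*k})$; passing to the stable value of Definition \ref{gener} gives $m(\langle a\rangle)=\langle m(a)\rangle$. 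For the multivariable statement, $m(\sum_i R(a_i))=\sum_i R(m(a_i))$ together with $\langle S\rangle:=\langle\sum_{s\in S}R(s)\rangle$ yields $\langle m(a_1),\dots,m(a_n)\rangle=m(\langle a_1,\dots,a_n\rangle)$, as required.
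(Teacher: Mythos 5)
Your proposal is correct, and its skeleton --- quote the intermediate planar algebra construction of \cite{kcb}/\cite{la0} for the existence of the two von Neumann algebra isomorphisms, pin down the coproduct constant, then deduce positivity and compatibility with generated biprojections by soft arguments --- is the same as the paper's, whose proof is exactly that citation (together with Lemma \ref{prodcoprod}) plus a computation of the constant. The genuine divergence is in how the constant is found, i.e.\ the step you call the main obstacle. You propose to track the normalizations of the two Fourier transforms; the paper sidesteps all such bookkeeping with an evaluation trick: granted from the references that there is a single scalar $\alpha$ with $l_K(a*b)=\alpha\, l_K(a)*l_K(b)$ for all $a,b$, evaluate at $a=b=e^K_K$ and use $b*b=\delta\tr(b)\,b$ (Theorem \ref{biproj}) in each planar algebra, giving $l_K(e^K_K*e^K_K)=|K:N|^{1/2}e^M_K$ while $e^M_K*e^M_K=|M:N|^{1/2}|M:K|^{-1}e^M_K$, whence $\alpha=|M:K|\,|K:N|^{1/2}/|M:N|^{1/2}=|M:K|^{1/2}$, and similarly for $r_K$. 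This is shorter and safer than your route, which gives the right constants but would require making the relation between the two Fourier transforms precise --- exactly the bookkeeping you admit is delicate. Two smaller points: your derivation of $r_K$ from $l_K$ via Fourier/Bisch--Jones duality departs from the paper (which cites the references for both maps) but is legitimate, and note it is precisely Lemma \ref{prodcoprod} that makes $e^M_K*\mathcal{P}_{2,+}*e^M_K$ an algebra for the usual product, so that lemma cannot be avoided on your route either; and your final paragraph is a correct expansion of what the paper leaves as ``immediate'', needing only the additional (true) observation that range projections computed in the hereditary corner $e^M_K\mathcal{P}_{2,+}e^M_K$, or in the unital subalgebra $e^M_K*\mathcal{P}_{2,+}*e^M_K$, agree with those computed in $\mathcal{P}_{2,+}$, by functional calculus in these finite-dimensional algebras.
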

\begin{proof} 
Immediate from \cite{kcb} or \cite{la0}, using Lemma \ref{prodcoprod}. We can compute the multiplicative constant for $l_K$ on the coproduct, directly as follows. Let $\alpha$ be the constant such that for any $a,b \in \mathcal{P}_{2,+}(N \subseteq K)$, $$l_K(a*b) = \alpha l_K(a) * l_K(b).$$ Note that $l_K(e^K_K) = e^M_K$, $e^M_K * e^M_K = |M:N|^{1/2} |M:K|^{-1}e^M_K$ and  $$l_K(e^K_K * e^K_K) = |K:N|^{1/2} e^M_K.$$ So, $$\alpha = |M:K| |K:N|^{1/2} / |M:N|^{1/2} = |M:K|^{1/2}.$$ We can compute similarly the constant for $r_K$.
\end{proof}    
\begin{notations} Let  $b_1 \le b \le b_2$ be the biprojections $e^M_P \le e^M_K \le e^M_Q$. We define $ l_b:=l_K $ and $ r_b:=r_K $; also $ \mathcal{P}(b_1, b_2):= \mathcal{P}(P \subseteq Q) $ and $$ |b_2:b_1|:=\tr(b_2)/\tr(b_1)=|Q:P|. $$
\end{notations}

\section{Ore's theorem on subfactor planar algebras}

We will generalize Theorem \ref{ore} to any irreducible subfactor planar algebra $\mathcal{P}$. The proof (organized in lemmas and propositions) is inspired by the proof of \cite[Theorem 4.9]{p2} and Clifford theory.

\begin{proposition} \label{mini}
Let $p \in \mathcal{P}_{2,+}$ be a minimal central projection. Then, there exists $u \le p$ minimal projection such that $\langle u \rangle = \langle p \rangle$.
\end{proposition}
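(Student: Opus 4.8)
The plan is to exploit the finite-dimensionality of $\mathcal{P}_{2,+}$ together with the elementary fact that a vector space over $\mathbb{C}$ is never a finite union of proper subspaces. Since $\mathcal{P}_{2,+}=N'\cap M_1$ is a finite-dimensional $\star$-algebra, it decomposes as a direct sum of matrix blocks, and a minimal central projection $p$ is the unit of one such block $p\,\mathcal{P}_{2,+}\,p\cong M_n(\mathbb{C})$. The minimal projections $u\le p$ are then exactly the rank-one projections of this block, i.e.\ the orthogonal projections onto the lines of the range space $V:=\im(p)\cong\mathbb{C}^n$. Write $b_0:=\langle p\rangle$. Because $u\le p$ gives $u\preceq p$, Lemma \ref{th}(3) yields $\langle u\rangle\le b_0$ for every such $u$; the whole problem is therefore to produce one $u$ for which this inequality is an equality.

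First I would record the relevant coatoms. The biprojections below $b_0$ form the finite interval $[e_1,b_0]$, and I claim $p\not\le b$ for every coatom $b$ of this interval: indeed $p\le b$ would force $b_0=\langle p\rangle\le\langle b\rangle=b<b_0$, a contradiction (here $\langle b\rangle=b$ since $b$ is itself a biprojection). Consequently, for each coatom $b$ the projection $p\wedge b$ lies strictly below $p$, so its range is a \emph{proper} subspace $V_b\subsetneq V$.

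Next I would choose $u$ generically. Since $V$ is a $\mathbb{C}$-vector space and the $V_b$ are finitely many proper subspaces, their union cannot be all of $V$; pick a vector $v\in V\setminus\bigcup_b V_b$ and let $u$ be the projection onto $\mathbb{C}v$. Then $u$ is a minimal projection with $u\le p$, and by construction $u\not\le b$ for every coatom $b$ of $[e_1,b_0]$ (if $u\le b$ then $u\le p\wedge b$, i.e.\ $v\in V_b$).

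Finally I would conclude by a standard feature of finite lattices: every element strictly below the top $b_0$ lies under some coatom. Applying this to the biprojection $\langle u\rangle\le b_0$, were $\langle u\rangle<b_0$ it would sit below some coatom $b$, whence $u\le\langle u\rangle\le b$, contradicting the previous step. Therefore $\langle u\rangle=b_0=\langle p\rangle$, as desired. The proof is not hard, and its genuine content is the generic-vector argument; the one point requiring care—and the step I expect to be the crux—is the passage from ``$u$ avoids all the $V_b$'' to ``$\langle u\rangle$ is under no coatom'', which hinges on the fact that $u$, being a projection, equals its own range projection, so that $\langle u\rangle\le b$ really does entail $u\le b$.
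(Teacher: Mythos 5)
Your proof is correct and follows essentially the same route as the paper's: both work in the matrix block under $p$, pick a line in $\im(p)$ avoiding the finitely many proper subspaces $\im(p)\cap\im(b_i)$ cut out by the coatoms $b_i$ of $[e_1,\langle p\rangle]$ (your $V_b$ is exactly the paper's $E_i\cap F$), and conclude via the coatom argument that $\langle u\rangle=\langle p\rangle$. The only differences are organizational: the paper first disposes of the case $p$ minimal and splits on whether $p\preceq\sum_i b_i$, using centrality of $p$ to write $F=\sum_i(E_i\cap F)$, whereas your uniform ``a complex vector space is not a finite union of proper subspaces'' argument makes that case distinction unnecessary.
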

\begin{proof}
 If $p$ is a minimal projection, then it's ok. Else, let $b_1, \dots , b_n$ be the coatoms of $[e_1,\langle p \rangle]$ ($n$ is finite by \cite{wa}). If $p \not \preceq \sum_{i=1}^n b_i$ then $\exists u \le p $ minimal projection such that $u  \not \le b_i \ \forall i$, so that $\langle u \rangle = \langle p \rangle$. Else $p \preceq \sum_{i=1}^n b_i$ (with $n>1$, otherwise $p \le b_1$ and $\langle p \rangle \le b_1$, contradiction). Consider $E_i=\im(b_i)$ and $F=\im(p)$, then $F=\sum_i E_i \cap F$ (because $p$ is a minimal central projection) with $1<n<\infty$ and $E_i \cap F \subsetneq F \ \forall i$ (otherwise $\exists i$ with $p \le b_i$, contradiction), so $\dim (E_i \cap F)< \dim (F)$ and there exists $U \subseteq F$ one-dimensional subspace such that $U \not \subseteq E_i \cap F$  $\forall i$, and so a fortiori $U \not \subseteq E_i$   $\forall i$. It follows that $u=p_{U} \le p$  is a minimal projection such that $\langle u \rangle = \langle p \rangle$.
\end{proof} 

\noindent Thanks to Proposition \ref{mini}, we can give the following definition:  

\begin{definition} \label{weakly} A planar algebra $\mathcal{P}$ is \emph{weakly cyclic} (or \emph{w-cyclic}) if it satisfies one of the following equivalent assertions:  
\begin{itemize}
\item $\exists u \in  \mathcal{P}_{2,+}$  minimal projection  such that $\langle u \rangle=\id$,
\item $\exists p \in  \mathcal{P}_{2,+}$  minimal central projection  such that $\langle p \rangle=\id$.
\end{itemize}
Moreover, $(N \subseteq M)$ is called w-cyclic if its planar algebra is w-cyclic.
\end{definition}

Let $\mathcal{P} = \mathcal{P}(N \subseteq M)$ be an irreducible subfactor planar algebra. Take an intermediate subfactor $N \subseteq K \subseteq M$ and its biprojection $b = e^M_K$.

\begin{proposition} \label{left}
The planar algebra $\mathcal{P}(e_1,b)$ is w-cyclic if and only if there is a minimal projection $u \in \mathcal{P}_{2,+}$ such that $\langle u \rangle = b$.
\end{proposition}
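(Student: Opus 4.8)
The plan is to transport the whole statement through the von Neumann algebra isomorphism $l_b := l_K$ of Theorem \ref{2iso}, which identifies $\mathcal{P}_{2,+}(e_1,b) = \mathcal{P}_{2,+}(N \subseteq K)$ with the corner $b\,\mathcal{P}_{2,+}\,b$. The two ingredients I would extract from Theorem \ref{2iso} are: (i) $l_b$ is a $\star$-isomorphism of von Neumann algebras sending the identity $e^K_K$ of $\mathcal{P}(N \subseteq K)$ to $l_b(e^K_K) = e^M_K = b$; and (ii) it intertwines the biprojection-generation maps, i.e.\ $\langle l_b(a) \rangle = l_b(\langle a \rangle)$ for positive $a$ in its domain, and symmetrically $\langle l_b^{-1}(a) \rangle = l_b^{-1}(\langle a \rangle)$ for positive $a$ in the corner. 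Under this dictionary, w-cyclicity of $\mathcal{P}(e_1,b)$ — a minimal projection $v$ with $\langle v \rangle = e^K_K$ — should correspond exactly to a minimal projection $u \le b$ in $\mathcal{P}_{2,+}$ with $\langle u \rangle = b$, which is precisely the assertion.

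For the forward direction I would assume $\mathcal{P}(e_1,b)$ is w-cyclic and pick (via Definition \ref{weakly}) a minimal projection $v$ with $\langle v \rangle = e^K_K$; then $u := l_b(v)$ is a projection lying in $b\,\mathcal{P}_{2,+}\,b$, and by (ii) and (i) one gets $\langle u \rangle = l_b(\langle v \rangle) = l_b(e^K_K) = b$. For the backward direction I would assume $u \in \mathcal{P}_{2,+}$ is minimal with $\langle u \rangle = b$; since $\langle u \rangle$ is the smallest biprojection dominating $u$ (Definition \ref{gener}) and biprojections are projections, $u = R(u) \le \langle u \rangle = b$, so $u$ lives in the corner and $v := l_b^{-1}(u)$ is a projection with $\langle v \rangle = l_b^{-1}(\langle u \rangle) = l_b^{-1}(b) = e^K_K$.

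The one point needing a careful word — and the main, if modest, obstacle — is matching the notion of \emph{minimality} across $l_b$, since $l_b$ is only onto the corner $b\,\mathcal{P}_{2,+}\,b$, not all of $\mathcal{P}_{2,+}$. Here I would insert the standard observation that a projection $p \le b$ is minimal in $b\,\mathcal{P}_{2,+}\,b$ if and only if it is minimal in $\mathcal{P}_{2,+}$: any subprojection $q \le p$ in $\mathcal{P}_{2,+}$ automatically satisfies $q = bqb \in b\,\mathcal{P}_{2,+}\,b$, so the set of subprojections of $p$ is the same whether computed in the corner or in the ambient algebra. With this equivalence in hand, the minimal $v$ produces a genuinely minimal $u$ (forward), and the minimal $u$, lying in the corner, produces a minimal $v$ (backward). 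No new estimate is required: the statement is a transparent consequence of the structure transport in Theorem \ref{2iso}, the equivalence of minimality in a corner, and the characterisation of $\langle \cdot \rangle$ as a smallest dominating biprojection.
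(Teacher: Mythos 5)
Your proposal is correct and follows essentially the same route as the paper: transport through the isomorphism $l_b = l_K$ of Theorem \ref{2iso}, using $l_K(e^K_K) = e^M_K = b$ and the compatibility $\langle l_K(x) \rangle = l_K(\langle x \rangle)$. Your explicit treatment of the two points the paper leaves implicit --- that $\langle u \rangle = b$ forces $u \le b$, hence $u$ lies in the corner, and that minimality of a projection in the corner $b\,\mathcal{P}_{2,+}\,b$ coincides with minimality in $\mathcal{P}_{2,+}$ --- is a welcome sharpening, not a deviation.
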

\begin{proof}
The planar algebra $\mathcal{P}(N \subseteq K)$ is w-cyclic if and only if there is a minimal projection $x \in \mathcal{P}_{2,+}(N \subseteq K)$ such that $\langle x \rangle =  e^K_K$, if and only if $l_K(\langle x \rangle) = l_K(e^K_K)$, if and only if $\langle u \rangle  =  e^M_K$ (by Theorem \ref{2iso}), with $u=l_K(x)$ a minimal projection in $e^M_K\mathcal{P}_{2,+}e^M_K$ and in $\mathcal{P}_{2,+}$.  
\end{proof}

\begin{proposition} \label{right}
The planar algebra $\mathcal{P}(b,\id)$ is w-cyclic if and only if there is a minimal projection $v \in \mathcal{P}_{2,+}$ such that $\langle b,v \rangle = \id$ and $r_b^{-1}(b*v*b)$ is a positive multiple of a minimal projection.
\end{proposition}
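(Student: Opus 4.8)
The plan is to transport the statement to the compressed von Neumann algebra $\mathcal{A} := e^M_K * \mathcal{P}_{2,+} * e^M_K = b * \mathcal{P}_{2,+} * b$ through the isomorphism $r_b = r_K$ of Theorem \ref{2iso}. Since $r_b$ is a $*$-isomorphism for the usual product (so it preserves minimality of projections), sends $e^M_M$ to $\id$, and intertwines biprojection generation via $\langle r_b(\cdot)\rangle = r_b(\langle \cdot \rangle)$ -- the coproduct constant being harmless because $\langle \cdot \rangle$ depends only on range projections (Lemma \ref{th}(4)) -- I would first reformulate: $\mathcal{P}(b,\id) = \mathcal{P}(K \subseteq M)$ is w-cyclic if and only if there is a minimal projection $w \in \mathcal{A}$ with $\langle w \rangle = \id$. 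Indeed, w-cyclicity of $\mathcal{P}(K\subseteq M)$ means there is a minimal projection $y \in \mathcal{P}_{2,+}(K\subseteq M)$ with $\langle y \rangle = e^M_M$, and $w := r_b(y)$ translates this verbatim.

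The technical heart is the following identity, which I would isolate as a lemma: for a biprojection $b$ and a projection $v \in \mathcal{P}_{2,+}$,
$$\langle b * v * b \rangle = \langle b, v \rangle.$$
For ``$\le$'', writing $c := \langle b,v \rangle$ we have $b \preceq c$ and $v \preceq c$, so two applications of Lemma \ref{th}(2) give $b*v*b \preceq c*c*c = (\delta \tr(c))^2 c \sim c$, hence $\langle b*v*b\rangle \le c$. For ``$\ge$'', put $d := \langle b*v*b \rangle$; since $b*v*b \in \mathcal{A}$ and $r_b$ carries the biprojections of $\mathcal{P}(K\subseteq M)$ onto the interval $[b,\id]$, the biprojection $d$ lies in $[b,\id]$, so $d \ge b$. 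The crux is $d \ge v$: here I would observe that $v \preceq b * v * b$. This follows from $e_1 \le b$: since $e_1 * v = \delta^{-1} v$ and $e_1 \preceq b$, Lemma \ref{th}(2) gives $v \sim e_1 * v \preceq b * v$, and applying it once more on the right yields $b*v \preceq b*v*b$, whence $v \preceq b*v*b \preceq d$. Thus $d \ge b$ and $d \ge v$, so $d \ge \langle b,v\rangle$.

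With the lemma in hand both implications are short. For the forward direction, w-cyclicity yields a minimal projection $w \in \mathcal{A}$ with $\langle w \rangle = \id$; viewing $w$ as a (possibly non-minimal) projection of $\mathcal{P}_{2,+}$, I would pick any minimal projection $v \le w$. Then $b*v*b \neq 0$ (since $v \preceq b*v*b$), and from $v \le w$ we get $R(b*v*b) \le R(b*w*b) = R(w) = w$ (using $w \in \mathcal{A}$); as $R(b*v*b) \in \mathcal{A}$ is a nonzero subprojection of the minimal projection $w$, it equals $w$, whence $b*v*b = w\,(b*v*b)\,w \in w\mathcal{A}w = \mathbb{C}w$, say $b*v*b = \tau w$ with $\tau > 0$. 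Hence $r_b^{-1}(b*v*b) = \tau\, r_b^{-1}(w)$ is a positive multiple of a minimal projection, and $\langle b,v\rangle = \langle b*v*b\rangle = \langle w\rangle = \id$ by the lemma. Conversely, given $v$ satisfying the two conditions, $b*v*b$ is a positive multiple of a minimal projection $w \in \mathcal{A}$, and $\langle w \rangle = \langle b*v*b \rangle = \langle b,v\rangle = \id$; transporting back through $r_b^{-1}$ produces a minimal projection generating $e^M_M$, so $\mathcal{P}(b,\id)$ is w-cyclic.

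The main obstacle is the lemma, and within it the lower bound $d \ge v$: everything hinges on the small but decisive observation that $e_1 \le b$ forces $v \preceq b*v*b$, so that compressing a minimal projection by the biprojection can neither annihilate it nor, by minimality of $w$ in $\mathcal{A}$, spread it across several minimal projections of $\mathcal{A}$. The remaining care is bookkeeping through $r_b$: checking that minimality, the top biprojection $e^M_M \mapsto \id$, and biprojection generation are all preserved, the multiplicative coproduct constant of Theorem \ref{2iso} being invisible to $\langle\cdot\rangle$.
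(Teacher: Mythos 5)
Your proposal is correct, and its skeleton is the same as the paper's: transport of w-cyclicity through $r_b$, the identity $\langle b*v*b\rangle = \langle b,v\rangle$ (the paper's Lemma \ref{lem2}), and the compression argument showing $b*v*b$ is a positive multiple of the image of a minimal projection (the paper's Lemma \ref{lem1}, which you rephrase inside $\mathcal{A}=b*\mathcal{P}_{2,+}*b$ via $w\mathcal{A}w=\mathbb{C}w$ instead of pulling back through $r_b^{-1}$ and using minimality of $x$ -- a cosmetic variation). The one genuine divergence is your proof of the inequality $b \le \langle b*v*b\rangle$ inside the key identity. The paper argues intrinsically in the $2$-box calculus: from $v \preceq b*v*b$ it gets $\overline{v} \preceq \langle b*v*b\rangle$ (a biprojection is contragredient-invariant, Theorem \ref{biproj}), and then extracts $b$ from the coproduct estimate $\overline{v}*b*v*b \succeq \overline{v}*v*b \succeq e_1*b \sim b$ via Lemma \ref{pre2}. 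You instead note that, by the generation-preserving clause of Theorem \ref{2iso}, $\langle b*v*b\rangle$ is the $r_b$-image of a biprojection of $\mathcal{P}(b,\id)$, hence of the form $e^M_Q$ with $K \subseteq Q$, so it dominates $b$ automatically; this is a clean lattice-theoretic shortcut, but it leans on Bisch's correspondence (Theorem \ref{bisch}) applied to $K \subseteq M$, and tacitly on Lemma \ref{pos} to know that $r_b^{-1}(b*v*b)$ is positive so that Theorem \ref{2iso} applies to it. Both routes are sound: the paper's computation stays self-contained in coproduct arithmetic, while yours is shorter and makes the structural reason ($\mathcal{A}$ only sees biprojections above $b$) transparent.
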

\begin{proof}

The planar algebra $\mathcal{P}(K \subseteq M)$ is w-cyclic if and only if there is a minimal projection $x \in \mathcal{P}_{2,+}(K \subseteq M)$ such that $\langle x \rangle =  e^M_M$, if and only if $r_K(\langle x \rangle) = r_K(e^M_M)$, if and only if $\langle r_K(x) \rangle  =  e^M_M$ by Theorem \ref{2iso}. The results follows by Lemmas \ref{lem1} and \ref{lem2}, below.
\end{proof}
\begin{lemma} \label{pos} Let $\mathcal{A}$ be a $\star$-subalgebra of $\mathcal{P}_{2,+}$. Then, any element $x \in \mathcal{A}$ is positive in $\mathcal{A}$ if and only if it is positive in $\mathcal{P}_{2,+}$.  \end{lemma}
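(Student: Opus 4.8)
The plan is to reduce everything to the standard characterization of positivity in a C*-algebra: an element $x$ is positive if and only if $x = y^{\star} y$ for some $y$ in the algebra. First I would record the one analytic input I need, namely that $\mathcal{P}_{2,+}$ is finite-dimensional, so the $\star$-subalgebra $\mathcal{A}$ is automatically norm-closed.

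The forward implication is immediate and I would dispose of it first: if $x$ is positive in $\mathcal{A}$, then $x = y^{\star} y$ with $y \in \mathcal{A} \subseteq \mathcal{P}_{2,+}$, and the same identity witnesses positivity of $x$ in $\mathcal{P}_{2,+}$.

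For the converse I would start from $x \in \mathcal{A}$ positive in $\mathcal{P}_{2,+}$, so that $x = x^{\star}$ with spectrum contained in $[0, \|x\|]$, and take its positive square root $\sqrt{x}$ in $\mathcal{P}_{2,+}$ via continuous functional calculus. The key observation is that $t \mapsto \sqrt{t}$ vanishes at $t = 0$, hence is a uniform limit on $[0,\|x\|]$ of polynomials with zero constant term; evaluating such polynomials at $x$ produces elements of $\mathcal{A}$, since each is a finite sum of powers $x^k$ with $k \ge 1$, all lying in the $\star$-subalgebra $\mathcal{A}$. Closedness of $\mathcal{A}$ would then give $\sqrt{x} \in \mathcal{A}$, and $x = (\sqrt{x})^{\star}\sqrt{x}$ would exhibit $x$ as positive in $\mathcal{A}$, completing the argument.

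The step I expect to require the most care is handling the fact that $\mathcal{A}$ need not contain the identity $\id$ of $\mathcal{P}_{2,+}$ (in the intended applications $\mathcal{A}$ is a corner such as $e^M_K * \mathcal{P}_{2,+} * e^M_K$), so that spectral permanence for unital C*-subalgebras is not directly available. The square-root argument is designed to bypass exactly this obstacle: because $\sqrt{\cdot}$ vanishes at the origin, the approximating polynomials carry no constant term and therefore remain inside the possibly non-unital subalgebra $\mathcal{A}$, so no unit is ever needed and the equivalence of the two notions of positivity follows without any reference to $\id$.
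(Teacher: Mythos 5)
Your proof is correct, and it takes a genuinely different route from the paper's. The forward direction is identical (a witness $x = aa^{\star}$ with $a \in \mathcal{A} \subseteq \mathcal{P}_{2,+}$ also works in the larger algebra). For the converse, however, the paper constructs nothing inside $\mathcal{A}$: it simply notes that positivity of $x$ in $\mathcal{P}_{2,+}$ gives $\langle xy \vert y \rangle = \tr(y^{\star}xy) \ge 0$ for all $y \in \mathcal{P}_{2,+}$, hence in particular for all $y \in \mathcal{A}$, and declares this to mean $x$ is positive in $\mathcal{A}$. That is shorter than your argument, but it quietly switches between two characterizations of positivity: the algebraic one ($x = aa^{\star}$, used in the forward direction) and the form/trace one ($\tr(y^{\star}xy) \ge 0$ for all $y$ in the algebra, used in the converse); their equivalence inside the possibly non-unital algebra $\mathcal{A}$, equipped with the faithful restricted trace, is a standard fact but is precisely the kind of point your proof makes explicit. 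Your route --- approximate $\sqrt{t}$ uniformly on $[0,\|x\|]$ by polynomials with zero constant term so that the approximants $p_n(x)$, being sums of powers $x^k$ with $k \ge 1$, lie in $\mathcal{A}$, then use finite-dimensionality (hence norm-closedness) of $\mathcal{A}$ to conclude $\sqrt{x} \in \mathcal{A}$ and $x = (\sqrt{x})^{\star}\sqrt{x}$ --- keeps a single definition of positivity throughout and confronts the non-unitality of corners like $b * \mathcal{P}_{2,+} * b$ head-on. It is the standard spectral-permanence argument, works verbatim for any norm-closed $\star$-subalgebra of any C*-algebra, and needs no trace at all; what the paper's one-line argument buys in exchange is brevity and the avoidance of functional calculus.
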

\begin{proof}
If $x$ is positive in $\mathcal{A}$, then it is of the form $aa^{\star}$, with $a \in \mathcal{A}$, but $a \in \mathcal{P}_{2,+}$ also, so $x$ is positive in $\mathcal{P}_{2,+}$. Conversely, if $x$ is positive in $\mathcal{P}_{2,+}$ then $\langle xy | y \rangle=\tr(y^{\star}xy) \ge 0$, for any $y \in \mathcal{P}_{2,+}$, so in particular, for any $y \in \mathcal{A}$, which means that $x$ is positive in $\mathcal{A}$.
\end{proof}
\noindent Note that Lemma \ref{pos} will be applied to $\mathcal{A} = b\mathcal{P}_{2,+}b$ or $b*\mathcal{P}_{2,+}*b$.
\begin{lemma} \label{lem1}
For any minimal projection $x \in \mathcal{P}_{2,+}(b,\id)$, $r_b(x)$ is positive and for any minimal projection $v \preceq r_b(x)$, there is $\lambda > 0$ such that $ b * v * b = \lambda r_b(x)$.
\end{lemma}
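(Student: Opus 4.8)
The plan is to understand the map $r_b$ through the isomorphism $r_K$ of Theorem \ref{2iso}, which identifies $\mathcal{P}_{2,+}(K\subseteq M)$ with the von Neumann algebra $\mathcal{A}=b*\mathcal{P}_{2,+}*b$ (where $b=e^M_K$), preserving $+,\times,\star$. Since $x$ is a projection in $\mathcal{P}_{2,+}(b,\id)$, its image $r_b(x)$ is a projection in $\mathcal{A}$, hence positive in $\mathcal{A}$; by Lemma \ref{pos} applied to $\mathcal{A}=b*\mathcal{P}_{2,+}*b$, it is positive in $\mathcal{P}_{2,+}$ as well. This settles the first assertion. The substance of the lemma is the identity $b*v*b=\lambda\, r_b(x)$ for a minimal projection $v\preceq r_b(x)$, with $\lambda>0$.

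For the main identity, first I would observe that $r_b(x)$ is a \emph{minimal} projection in $\mathcal{A}$: minimality is an algebraic (lattice-theoretic) property of projections and is preserved under the $\star$-algebra isomorphism $r_b$. The key structural fact I would extract from Lemma \ref{prodcoprod} is that $\mathcal{A}=b*\mathcal{P}_{2,+}*b$ is closed under the coproduct $*$, so that for any $a\in\mathcal{A}$ one has $b*a*b=a$ (since $b$ is the unit of $\mathcal{A}$ under $*$, using $b*b=\delta\tr(b)b$ normalized appropriately, or directly from the exchange relations). Thus the operation $v\mapsto b*v*b$ is the map that pushes an arbitrary positive element $v\in\mathcal{P}_{2,+}$ into $\mathcal{A}$ via a conditional-expectation-type compression for the coproduct.

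The central step is then to show $b*v*b$ is a positive multiple of the minimal projection $r_b(x)$. I would argue as follows: since $v\preceq r_b(x)=:f$ with $f$ minimal in $\mathcal{A}$, I compress by $f$ on both sides in the coproduct algebra. Using Lemma \ref{prodcoprod} (the second displayed line, with the roles of $\cdot$ and $*$ played by $*$ inside $\mathcal{A}$), the element $b*v*b$ lies in $f\,\mathcal{A}\,f$ in the appropriate sense, and because $f$ is a minimal projection the compressed algebra $f\mathcal{A}f$ (for the coproduct-multiplication) is one-dimensional, spanned by $f$ itself. Hence $b*v*b=\lambda f$ for some scalar $\lambda$. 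Positivity of the coproduct (Lemma \ref{th}(1)) forces $\lambda\ge 0$, and $\lambda\neq 0$ because $v\neq 0$ implies $b*v*b\neq 0$ (one can see this by taking traces, since $\tr(b*v*b)$ is a positive multiple of $\tr(v)>0$). This gives $\lambda>0$ and $b*v*b=\lambda\, r_b(x)$.

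The main obstacle I anticipate is making rigorous the claim that the compressed object $f\,\mathcal{A}\,f$ under the coproduct is one-dimensional, i.e. that minimality of $r_b(x)$ in $(\mathcal{A},\times)$ transfers correctly to a minimality statement for the coproduct structure. The cleanest route is to transport everything through $r_b^{-1}$ back to $\mathcal{P}_{2,+}(K\subseteq M)$, where $x$ is genuinely a minimal projection in a von Neumann algebra with ordinary multiplication, and where the coproduct identities of Lemma \ref{prodcoprod} become the standard relations $e_1*y*e_1=\delta^{-1}(\text{something})$ governing the Jones projection of the smaller subfactor; there the statement $e_1 * v' * e_1 = \lambda' R(v')$ for $v'$ minimal is exactly the normalization $a*e_1=e_1*a=\delta^{-1}a$ combined with minimality. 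I expect the bookkeeping of the multiplicative constant (the factor $|K:N|^{-1/2}$ from Theorem \ref{2iso}) to absorb into $\lambda$, so no sharp value of $\lambda$ is needed—only its positivity.
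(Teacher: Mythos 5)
Your first assertion (positivity of $r_b(x)$) is fine, and your overall strategy --- compress $b*v*b$ into the corner of the minimal projection $f:=r_b(x)$ of $\mathcal{A}=b*\mathcal{P}_{2,+}*b$ and conclude by one-dimensionality of that corner --- is essentially the paper's argument transported through $r_b$. But the step you leave unproven is precisely the heart of the lemma: that $v\preceq f$ forces $b*v*b$ to be supported under $f$ (i.e. $b*v*b\preceq f$, equivalently $b*v*b=f\cdot(b*v*b)\cdot f$). This does not follow from $\mathcal{A}$ being closed under $*$, nor from $b$ being a (normalized) coproduct-unit of $\mathcal{A}$. It needs a concrete argument: the paper's route is the monotonicity Lemma \ref{th}(2), which you never invoke --- writing $f=b*u*b$, one gets $b*v*b\preceq b*f*b=b*b*u*b*b\sim b*u*b=f$, using $b*b=\delta\tr(b)\,b$; alternatively, one can make your appeal to Lemma \ref{prodcoprod} honest by a double application of the exchange relation: since $v\le f$ as projections, $v=fvf$, hence $b*v*b=b*(f\cdot(vf))*b=f\cdot\bigl(b*(v\cdot f)*b\bigr)=f\cdot(b*v*b)\cdot f$. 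Note also that the corner whose one-dimensionality you need is $f\mathcal{A}f$ for the \emph{ordinary} product, where $f$ is minimal; your parenthetical ``for the coproduct-multiplication'' is exactly the confusion you yourself flag as the main obstacle, so as written the proposal is incomplete by your own admission.

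Moreover, the fix you propose for that obstacle fails. You cannot ``transport everything through $r_b^{-1}$'': the projection $v$ is minimal in $\mathcal{P}_{2,+}$, not in $\mathcal{A}$, and in general $v\notin\mathcal{A}$, so $v$ has no preimage $v'=r_b^{-1}(v)$ and $b*v*b$ is not of the form (constant times) $r_b(e_1*v'*e_1)$. Worse, if the operative mechanism were the normalization $e_1*y*e_1=\delta^{-2}y$, the conclusion would be that $b*v*b$ is a positive multiple of $v$ itself --- indeed, for $a\in\mathcal{A}$ one has $b*a*b=(\delta\tr(b))^2a$ --- whereas the lemma asserts it is a multiple of $f=r_b(x)$, which is in general strictly larger than $v$; the scalar-multiple phenomenon comes from minimality of $x$ in the intermediate algebra, not from the Jones-projection normalization. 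The only object one may (and the paper does) pull back is the single element $b*v*b\in\mathcal{A}$: having shown $b*v*b\preceq f$ and $b*v*b>0$ (Lemma \ref{th}(1)), the isomorphism $r_b^{-1}$ preserves positivity and range projections, so $r_b^{-1}(b*v*b)$ is a positive nonzero operator with range under the minimal projection $x$, hence equals $\lambda x$ with $\lambda>0$, and applying $r_b$ gives $b*v*b=\lambda r_b(x)$.
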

\begin{proof}
First $x$ is positive, so by Theorem \ref{2iso}, $r_b(x)$ is also positive. For any minimal projection $v \preceq r_b(x)$, we have $b * v * b \preceq  r_b(x)$, because $$b * v * b \preceq b*r_b(x)*b =  b*b * u * b*b \sim b*u*b = r_b(x),$$ by Lemma \ref{th}(2) and with $u \in \mathcal{P}_{2,+}$.  Now by Lemma \ref{th}(1), $b*v*b>0$, so $r_b^{-1}(b*v*b)>0$ also, and by Theorem \ref{2iso}, $$r_b^{-1}(b * v * b) \preceq  x.$$ But $x$ is a minimal projection, so by positivity, $\exists \lambda > 0$ such that $$r_b^{-1}(b * v * b) = \lambda x.$$ It follows that $ b * v * b = \lambda r_b(x)$.
\end{proof}

\begin{lemma} \label{lem2}
For $v \in \mathcal{P}_{2,+}$ positive, $\langle b*v*b \rangle = \langle b,v \rangle$.
\end{lemma}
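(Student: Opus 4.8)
The plan is to establish the two inclusions $\langle b*v*b\rangle \le \langle b,v\rangle$ and $\langle b,v\rangle \le \langle b*v*b\rangle$ separately, using throughout that by Definition \ref{gener} one has $\langle b,v\rangle = \langle R(b)+R(v)\rangle = \langle b + R(v)\rangle$, since $b$ is a projection. I write $c := \langle b,v\rangle$ and $d := \langle b*v*b\rangle$, both of which are biprojections.

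For the inclusion $\langle b*v*b\rangle \le c$, I would start from $b \le c$ and $v \preceq c$, which hold by definition of $c$, and push them through the coproduct with Lemma \ref{th}(2): applying it twice yields $b*v*b \preceq c*c*c$. Since $c$ is a biprojection, Theorem \ref{biproj} gives $c*c = \delta\tr(c)\,c$, hence $c*c*c = (\delta\tr(c))^2 c \sim c$; therefore $b*v*b \preceq c$, and Lemma \ref{th}(3) gives $\langle b*v*b\rangle \le \langle c\rangle = c$. This direction is routine monotonicity.

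For the reverse inclusion it suffices to show $b \le d$ and $R(v)\le d$: then $b + R(v) \preceq d$, and Lemma \ref{th}(3) gives $\langle b,v\rangle = \langle b + R(v)\rangle \le \langle d\rangle = d$. The bound $R(v)\le d$ is easy: from $e_1 \le b$ and Lemma \ref{th}(2) one gets $e_1*v*e_1 \preceq b*v*b$, and since $e_1*v*e_1 = \delta^{-2} v$ this reads $v \preceq b*v*b \preceq d$, so $R(v)\le d$.

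The hard part is $b \le d$. I expect this to be the main obstacle, because one cannot hope for $b \preceq b*v*b$ in general: already for $b=e_1$ one has $b*v*b = \delta^{-2}v$, which need not dominate $e_1$, so a pure coproduct–monotonicity argument fails and the isomorphism of Theorem \ref{2iso} is essential. The key observation is that $b*v*b$ lies in $b*\mathcal{P}_{2,+}*b = \im(r_b)$. Setting $x := r_b^{-1}(b*v*b)$, which is positive by Lemma \ref{th}(1) and Theorem \ref{2iso}, the generation-preserving property of $r_b$ yields $d = \langle r_b(x)\rangle = r_b(\langle x\rangle)$. Now $\langle x\rangle$ is a biprojection of $\mathcal{P}(K\subseteq M)$, hence dominates its minimal biprojection $e^M_K$; applying the order-preserving map $r_b$ and using $r_b(e^M_K)=b$ from Theorem \ref{2iso} gives $d = r_b(\langle x\rangle) \ge r_b(e^M_K) = b$. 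Combining the two bounds completes the argument.
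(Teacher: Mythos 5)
Your proof is correct, but the key step is handled by a genuinely different argument than the paper's. For the inclusion $\langle b*v*b\rangle \le \langle b,v\rangle$ you use monotonicity of the coproduct plus $c*c \sim c$, where the paper simply reads $b*v*b \preceq \langle b,v\rangle$ off Definition \ref{gener}; these are essentially the same. The real divergence is in proving $b \le \langle b*v*b\rangle$. The paper stays entirely inside the coproduct calculus of $\mathcal{P}_{2,+}$: since the biprojection $d=\langle b*v*b\rangle$ is closed under contragredient (Theorem \ref{biproj}), one has $\overline{v}\preceq d$, and then Lemma \ref{pre2} gives
$$\overline{v} * b * v * b \ \succeq\ \overline{v} * e_1 * v * b \ \sim\ \overline{v} * v * b \ \succeq\ e_1 * b \ \sim\ b,$$
so $b \preceq d$ by closure of $d$ under coproduct. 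You instead pull $b*v*b$ back through the isomorphism $r_b$ of Theorem \ref{2iso}, use the generation-preserving property to write $d = r_b(\langle x\rangle)$, and observe that any biprojection of $\mathcal{P}(K\subseteq M)$ dominates its Jones projection $e^M_K$, whence $d \ge r_b(e^M_K)=b$. This is valid and not circular (Theorem \ref{2iso} does not depend on Lemma \ref{lem2}), and it makes the lemma conceptually transparent: $\langle b*v*b\rangle$ is by construction a biprojection of the upper planar algebra $\mathcal{P}(b,\id)$, hence lies above $b$. What the paper's route buys is economy and self-containment: it needs only the elementary 2-box tools (Lemmas \ref{pre2}, \ref{th}, Theorem \ref{biproj}), keeping Lemma \ref{lem2} independent of the heavy intermediate-subfactor machinery imported from \cite{kcb} and \cite{la0}. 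One small gap in your version: to invert $r_b$ on $b*v*b$ and invoke positivity-preservation of $r_b^{-1}$, you need to know $b*v*b$ is positive \emph{in the subalgebra} $b*\mathcal{P}_{2,+}*b$, not just in $\mathcal{P}_{2,+}$; that transfer is exactly Lemma \ref{pos}, which you should cite alongside Lemma \ref{th}(1) and Theorem \ref{2iso}.
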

\begin{proof}
First, by Definition \ref{gener}, $b * v * b  \preceq \langle b,v \rangle$, so by Lemma \ref{th}(3), $\langle b * v * b \rangle \le \langle b,v \rangle$. Next $e_1 \le b$ and $x*e_1 = e_1*x = \delta^{-1}x $, so $$v = \delta^2 e_1 * v * e_1 \preceq b * v * b.$$ Moreover by Theorem \ref{biproj}, $\overline{v} \preceq \langle b * v * b \rangle$, but by Lemma \ref{pre2}, $$\overline{v} * b * v * b \succeq \overline{v} * e_1 * v * b \sim \overline{v} * v * b \succeq e_1 * b \sim b.$$ Then   $b,v  \le  \langle b * v * b \rangle$, so we also have $\langle b,v \rangle \le \langle b * v * b \rangle$.  
\end{proof}

\begin{proposition} \label{topw}
Let $\mathcal{P}$ be an irreducible subfactor planar algebra and $[e_1,\id]$ its biprojection lattice. Let $[t, \id]$ be the top interval of $[e_1, \id]$. Then, $\mathcal{P}$ is w-cyclic if $\mathcal{P}(t,\id)$ is so.
\end{proposition}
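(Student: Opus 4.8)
The plan is to mimic the group-theoretic reduction in Claim~\ref{topred} of Theorem~\ref{ore}, translating the phrase ``$g\not\in M$ a fortiori forces $\langle H,g\rangle\not\subseteq M$'' into the planar algebra language via the coproduct. Assume $\mathcal{P}(t,\id)$ is w-cyclic. By Proposition~\ref{right} applied with $b=t$, there is a minimal projection $v\in\mathcal{P}_{2,+}$ with $\langle t,v\rangle=\id$ (and $r_t^{-1}(t*v*t)$ a positive multiple of a minimal projection, though for this statement I expect only the generation property $\langle t,v\rangle=\id$ to matter). The goal is to produce a \emph{minimal} projection $u\in\mathcal{P}_{2,+}$ with $\langle u\rangle=\id$, exhibiting w-cyclicity of $\mathcal{P}$ itself.

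The key step is to show that $v$ alone already generates the identity, i.e.\ $\langle v\rangle=\id$, so that $u=v$ works. First I would observe that $\langle v\rangle$ is a biprojection in $[e_1,\id]$, hence lies in some position relative to the coatoms of the whole lattice. Let $b_1,\dots,b_n$ be the coatoms of $[e_1,\id]$; by the definition of the top interval, $t=\bigwedge_i b_i$, so each coatom satisfies $t\le b_i$. The plan is to argue that $\langle v\rangle$ is not below any coatom $b_i$. Indeed, if $\langle v\rangle\le b_i$ for some $i$, then since $t\le b_i$ as well, we would get $\langle t,v\rangle=\langle t\rangle\vee\langle v\rangle\le b_i\subsetneq\id$, contradicting $\langle t,v\rangle=\id$. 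Since a biprojection not contained in any coatom of a finite lattice must equal the maximum $\id$, we conclude $\langle v\rangle=\id$.

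To make the last sentence rigorous I would invoke the elementary lattice fact that in any finite lattice, the only element lying below no coatom is $\hat1$ itself: every element $x\ne\hat1$ satisfies $x\le M$ for some coatom $M$, obtained by taking a maximal element of $[x,\hat1]\setminus\{\hat1\}$. Applied to $x=\langle v\rangle$ in $[e_1,\id]$, this yields $\langle v\rangle=\id$. Then $u:=v$ is a minimal projection with $\langle u\rangle=\id$, so $\mathcal{P}$ is w-cyclic by Definition~\ref{weakly}.

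The main obstacle I anticipate is verifying that the join in the biprojection lattice is compatible with the generation operator in exactly the form $\langle t,v\rangle=\langle t\rangle\vee\langle v\rangle$ and that this equals $\langle\{t,v\}\rangle$ as in Definition~\ref{gener}; this is essentially built into the definitions (the biprojection generated by a set is the smallest biprojection above all of them, which is the lattice join of the individually generated biprojections), so the argument should be purely formal once that identification is in place. Notably, this reduction does not even require the distributivity or Boolean hypotheses that the analogous group statement leaned on in Claim~\ref{preore2}; only the trivial containment $t\le b_i$ for every coatom is used, exactly paralleling the minimal-effort Claim~\ref{topred}.
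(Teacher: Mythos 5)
Your proposal is correct and follows essentially the same route as the paper: invoke Proposition~\ref{right} with $b=t$ to get a minimal projection $v$ with $\langle t,v\rangle=\id$, then use $t\le b_i$ for every coatom $b_i$ to rule out $\langle v\rangle$ (equivalently $v$) lying under any coatom, whence $\langle v\rangle=\id$ by the finite-lattice coatom fact. The paper's proof is just a compressed version of this same argument, phrased with $v\le b_i$ instead of $\langle v\rangle\le b_i$.
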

\begin{proof}
Let $b_1, \dots , b_n$ be the coatoms of $[e_1, \id]$ and $t = \bigwedge_{i=1}^n b_i$. By assumption and Proposition \ref{right}, there is a minimal projection $v \in \mathcal{P}_{2,+}$ with $\langle t,v \rangle = \id$. If $\exists i$ such that $v \le b_i$, then $\langle t,v \rangle \le b_i$, contradiction. So $\forall i $, $v \not \le b_i$ and then $\langle v \rangle = \id$. 
\end{proof}

\begin{theorem} \label{mainsubfactor}
An irreducible subfactor planar algebra $\mathcal{P}$ with a top Boolean biprojection lattice $[e_1, \id]$ is w-cyclic.
\end{theorem}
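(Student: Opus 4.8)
The plan is to mirror the group-theoretic argument of Theorem~\ref{ore} exactly, transporting each of its three claims to the planar algebra setting using the subfactor machinery already assembled. The top Boolean hypothesis means the top interval $[t,\id]$ of the biprojection lattice $[e_1,\id]$ is isomorphic to some $\mathcal{B}_m$, so by Proposition~\ref{topw} it suffices to prove that $\mathcal{P}(t,\id)$ is w-cyclic; in other words, I may replace $\mathcal{P}$ by the subfactor planar algebra $\mathcal{P}(t,\id)$ and assume from the outset that the entire biprojection lattice $[e_1,\id]$ is Boolean. This reduction is the analogue of Claim~\ref{topred}, and it is supplied essentially for free by the propositions above.

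With a Boolean lattice in hand, I would argue by induction on the rank $m$ of $\mathcal{B}_m$, imitating Claim~\ref{preore2}. The base case $m=1$ is the w-cyclic analogue of Claim~\ref{max}: the lattice is $[e_1,\id]$ with no intermediate biprojection, so any minimal projection $u$ not dominated by $e_1$ satisfies $\langle u\rangle=\id$ (and such $u$ exists by Proposition~\ref{mini} applied to a minimal central projection $p\not\le e_1$, whose existence follows since $e_1\neq\id$). For the inductive step, choose an atom $b$ of the Boolean lattice $[e_1,\id]$; the interval $[b,\id]$ is then Boolean of rank $m-1$, so the planar algebra $\mathcal{P}(b,\id)$ is w-cyclic by induction. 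By Proposition~\ref{right} this yields a minimal projection $v$ with $\langle b,v\rangle=\id$ and with $r_b^{-1}(b*v*b)$ a positive multiple of a minimal projection. The heart of the matter is then to promote $v$ to a \emph{single} minimal projection $u$ with $\langle u\rangle=\id$, which is where Clifford theory and the complement lemma (Lemma~\ref{compl}) should enter.

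Concretely, I would analyze the coproduct decomposition of $b*v*b$ against the $H$-coset partition of $Kg$ in the group case: the element $r_b^{-1}(b*v*b)$, being a multiple of a minimal projection over the biprojection $b$, decomposes as a sum of $\lvert b:e_1\rvert$ pieces, each generating a biprojection of the form $\langle w_i\rangle$ with $b\vee\langle w_i\rangle=\langle b,v\rangle=\id$. By Lemma~\ref{compl}, since $b$ is an atom, each such join forces $\langle w_i\rangle\in\{b^{\complement},\id\}$. If every piece gave $b^{\complement}$, then joining them back would reproduce $\langle b,v\rangle=\bigvee_i\langle w_i\rangle=b^{\complement}\neq\id$, a contradiction exactly as in Claim~\ref{preore2}. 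Hence some piece $w_i$ is a minimal projection with $\langle w_i\rangle=\id$, establishing w-cyclicity.

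I expect the main obstacle to be the precise extraction and bookkeeping of these ``coset pieces'': in the group setting the decomposition of $Kg$ into $H$-cosets is combinatorially transparent, but here the corresponding statement is a statement about how a minimal projection over $b$ decomposes, via the Fourier/coproduct structure, into a finite family of minimal projections whose generated biprojections refine the partition. Controlling that $b*v*b$ (equivalently $r_b(x)$ for the relevant minimal $x$) really does break into $\lvert b:e_1\rvert$ summands each generating $\id$ with $b$, and that the additivity $\langle\bigsqcup_i w_i\rangle=\bigvee_i\langle w_i\rangle$ holds at the level of range projections, is the delicate point; Lemma~\ref{th} on the monotonicity of $\langle\cdot\rangle$ under $\preceq$ together with Theorem~\ref{2iso} transporting everything through $r_b$ should be the tools that make it rigorous, but verifying that the minimality is preserved under the transport $r_b^{\pm1}$ is exactly the subtlety that Proposition~\ref{right} and Lemmas~\ref{lem1}--\ref{lem2} were built to handle.
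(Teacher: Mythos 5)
Your proposal is correct and follows essentially the same route as the paper's own proof: reduction to the Boolean case via Proposition~\ref{topw}, induction on the rank, transport through $r_b$, the key chain $b \vee \langle w \rangle = \langle b, w\rangle = \langle b*w*b\rangle = \langle r_b(x)\rangle = \id$ valid for \emph{every} minimal projection $w \preceq b*v*b \sim r_b(x)$ (Lemmas~\ref{lem1}, \ref{lem2} and Theorem~\ref{2iso}), then Lemma~\ref{compl} and the spectral-theorem dichotomy yielding the contradiction. The only blemish is your claim that the decomposition has exactly $\lvert b:e_1\rvert$ pieces, a count that is both unneeded and unavailable in general (it need not be an integer); the paper simply decomposes the range projection of $r_b(x)$ into finitely many minimal projections $v_1,\dots,v_m$ of $\mathcal{P}_{2,+}$ and runs the same dichotomy on those.
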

\begin{proof}
By Proposition \ref{topw}, we can assume $[e_1, \id]$ Boolean.

\noindent We will make a proof by induction on the rank of the Boolean lattice.  

\noindent If $[e_1, \id]$ is of rank $1$, then for any minimal projection $u \neq e_1$, $\langle u \rangle = \id$. Now suppose $[e_1, \id]$ Boolean of rank $n>1$, and assume the result true for any rank $<n$. Let $b$ be an atom of $[e_1, \id]$. Then $[b, \id]$ is Boolean of rank $n-1$, so by assumption $\mathcal{P}(b,\id)$ is w-cyclic, thus there is a minimal projection $x \in \mathcal{P}_{2,+}(b,\id)$ with $\langle x \rangle = \id$. By Theorem \ref{2iso},  Lemmas \ref{th}, \ref{lem1} and \ref{lem2}, for any minimal projection $v \preceq r_b(x)$, $$ b \vee \langle v \rangle =  \langle b, v \rangle =  \langle b*v*b \rangle = \langle r_b(x) \rangle =r_b(\langle x \rangle) = r_b(\id) =  \id.$$ 
Thus $\langle v \rangle \in \{b^{\complement} , \id\}$ by Lemma \ref{compl}. 
\noindent Assume that for every minimal projection $v \preceq r_b(x)$ we have $\langle v \rangle =  b^{\complement}$; because $ r_b(x) > 0$, by the spectral theorem, there is an integer $m$ and minimal projections $v_1, \dots , v_m$ such that $r_b(x) \sim \sum_{i=1}^m v_i$, so 
$$\id  = \langle r_b(x) \rangle = \langle \sum_{i=1}^m v_i \rangle \le  \bigvee_{i=1}^m \langle v_i \rangle =  \bigvee_{i=1}^m b^{\complement} = b^{\complement},$$ 
thus $\id \le b^{\complement}$, contradiction. So there is a minimal projection $v \preceq r_b(x)$ such that $\langle v \rangle = \id $, and the result follows. 
 \end{proof}
 
\noindent The proof of Theorem \ref{mainintro} follows by Proposition \ref{topBn}. 
In general, we deduce the following non-trivial upper bound:
\begin{corollary} \label{upper}
The minimal number $r$ of minimal projections generating the identity biprojection of $\mathcal{P}$ (i.e. $\langle u_1, \dots, u_r \rangle = \id$) is at most the minimal length $\ell$ for an ordered chain of biprojections $$e_1=b_0 < b_1 < \dots < b_{\ell} = \id$$ such that $[b_i,b_{i+1}]$ is top Boolean.
\end{corollary}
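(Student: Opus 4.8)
The plan is to convert the chain into generators one interval at a time and then telescope. Fix a chain $e_1 = b_0 < b_1 < \dots < b_\ell = \id$ of biprojections with every $[b_i,b_{i+1}]$ top Boolean; I aim to produce minimal projections $u_1,\dots,u_\ell \in \mathcal{P}_{2,+}$ with $\langle u_1,\dots,u_k\rangle = b_k$ for each $k$, so that in particular $\langle u_1,\dots,u_\ell\rangle = b_\ell = \id$ and hence $r \le \ell$; evaluating this on a chain of minimal length yields the stated bound. The heart of the construction is a single lemma: for biprojections $b \le b'$ with $[b,b']$ top Boolean there is a minimal projection $u \in \mathcal{P}_{2,+}$ with $\langle b,u\rangle = b'$.

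To prove this lemma I would first note that the biprojection lattice of $\mathcal{P}(b,b')$ is exactly $[b,b']$, which is top Boolean, so $\mathcal{P}(b,b')$ is w-cyclic by Theorem \ref{mainsubfactor}. The obstruction is that Proposition \ref{right} is phrased only for intervals of the form $[b,\id]$ reaching the global identity, whereas here $b'$ need not be $\id$. I would remove this by relocating inside the intermediate planar algebra $\mathcal{P}(e_1,b') = \mathcal{P}(N \subseteq K')$, which is again irreducible since $N' \cap K' \subseteq N' \cap M = \mathbb{C}$ and whose biprojection lattice is $[e_1,b']$ via the isomorphism $l_{b'}$ of Theorem \ref{2iso}. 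In this smaller planar algebra $b'$ plays the role of the identity $\hat{1} = l_{b'}^{-1}(b')$, the biprojection $\beta := l_{b'}^{-1}(b)$ plays the role of $b$, and the interval $[\beta,\hat{1}]$, being order-isomorphic to $[b,b']$, is top Boolean; hence $\mathcal{P}(\beta,\hat{1})$ is w-cyclic by Theorem \ref{mainsubfactor}, and Proposition \ref{right} applied inside $\mathcal{P}(e_1,b')$ yields a minimal projection $v \in \mathcal{P}_{2,+}(e_1,b')$ with $\langle \beta,v\rangle = \hat{1}$.

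It then remains to transport $v$ back to $\mathcal{P}_{2,+}$. Setting $u := l_{b'}(v) \in b'\mathcal{P}_{2,+}b'$, the map $l_{b'}$ is a von Neumann algebra isomorphism preserving $\langle\cdot\rangle$ by Theorem \ref{2iso}, so $\langle b,u\rangle = l_{b'}(\langle \beta,v\rangle) = l_{b'}(\hat{1}) = b'$. Moreover $u$ is minimal not merely in the corner but in all of $\mathcal{P}_{2,+}$: any nonzero subprojection $q \le u \le b'$ satisfies $q = b'qb' \in b'\mathcal{P}_{2,+}b'$, so $q = u$ by minimality in the corner, which is the same observation used in Proposition \ref{left}. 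This proves the lemma, and applied to each $[b_i,b_{i+1}]$ it gives minimal projections $u_{i+1}$ with $\langle b_i,u_{i+1}\rangle = b_{i+1}$; for $b_i = e_1$ it simply recovers Proposition \ref{left}.

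Finally I would telescope. Since $\langle u_1,\dots,u_k\rangle$ is by definition the smallest biprojection dominating each $R(u_j)$ with $j \le k$, a biprojection dominates it precisely when it dominates $\langle u_1,\dots,u_{k-1}\rangle$ together with $R(u_k)$; hence $\langle u_1,\dots,u_k\rangle = \langle \langle u_1,\dots,u_{k-1}\rangle,u_k\rangle$. Feeding in the inductive identity $\langle u_1,\dots,u_{k-1}\rangle = b_{k-1}$ and the lemma's output $\langle b_{k-1},u_k\rangle = b_k$ gives $\langle u_1,\dots,u_k\rangle = b_k$, and for $k = \ell$ we obtain $\langle u_1,\dots,u_\ell\rangle = \id$, whence $r \le \ell$. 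The step I expect to be the main obstacle is exactly the relocation: correctly recognising that the interior interval $[b,b']$ becomes a genuine top-to-identity interval in $\mathcal{P}(e_1,b')$ so that Proposition \ref{right} applies, and then tracking that the transported generator stays minimal in the full $\mathcal{P}_{2,+}$ and that $\langle\cdot\rangle$ is respected by $l_{b'}$; the telescoping and the order-theoretic stability of top-Booleanness under the lattice isomorphism are then routine.
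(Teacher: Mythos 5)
Your proposal is correct and takes essentially the same route as the paper: your key lemma is precisely the paper's Lemma \ref{interw} combined with Theorem \ref{mainsubfactor}, proved the same way --- relocating the interval into the corner algebra via the $l$-isomorphism of Theorem \ref{2iso}, applying Proposition \ref{right} there, and transporting the minimal projection back. The telescoping induction you spell out is exactly what the paper leaves implicit in ``immediate from Theorem \ref{mainsubfactor} and Lemma \ref{interw}.''
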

\begin{proof} Immediate from Theorem \ref{mainsubfactor} and Lemma \ref{interw}.
\end{proof}
\begin{remark} Let $(N \subset M)$ be an irreducible finite index subfactor. Then Corollary \ref{upper} reformulates as a non-trivial upper bound for the minimal number of (algebraic) irreducible sub-$N$-$N$-bimodules of $M$, generating $M$ as von Neumann algebra.
\end{remark} 
\begin{lemma} \label{interw}
Let $b' < b$ be biprojections. If $\mathcal{P}(b' , b)$ is w-cyclic, then there is a minimal projection $u \in \mathcal{P}_{2,+}$ such that $\langle b', u \rangle = b$.  
\end{lemma}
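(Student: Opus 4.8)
The plan is to reduce the statement about the relative planar algebra $\mathcal{P}(b',b)$ to the absolute w-cyclicity machinery already developed, by transporting everything through the isomorphism $r_{b'}$ (or $l_b$). The cleanest route mirrors the structure of Proposition \ref{right}, which handled exactly the case $b' = b$ replaced by a general biprojection $b$ and $b$ replaced by $\id$. First I would note that $b' < b$ means we have intermediate subfactors $N \subseteq P \subseteq K \subseteq M$ with $b' = e^M_P$ and $b = e^M_K$. The relative planar algebra $\mathcal{P}(b',b) = \mathcal{P}(P \subseteq K)$ is w-cyclic by hypothesis, so there is a minimal projection $x \in \mathcal{P}_{2,+}(b',b)$ with $\langle x \rangle$ equal to the identity biprojection of that relative planar algebra.

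The key step is then to apply Theorem \ref{2iso} to the chain $N \subseteq P \subseteq K \subseteq M$: the map $r_{b'}$ (which is $r_P$ in the notation there, suitably restricted to the relative algebra sitting inside $\mathcal{P}(P \subseteq M)$) sends this minimal projection $x$ to a positive element, and crucially preserves the biprojection-generation operation $\langle \cdot \rangle$ up to the isomorphism. So I would transport the relation $\langle x \rangle = (\text{identity of } \mathcal{P}(b',b))$ through $r_{b'}$ to obtain $\langle r_{b'}(x) \rangle = b$ inside $\mathcal{P}_{2,+}$. Here I am using that $r_{b'}$ carries the identity biprojection of the relative algebra to $b$ itself, just as $r_K$ carried $e^M_M$ to $e^M_M$ in Proposition \ref{right}. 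At this point the argument is essentially identical to the second half of Proposition \ref{right}: invoking Lemmas \ref{lem1} and \ref{lem2} I can pass from the single positive element $r_{b'}(x) \sim b'*v*b'$ to a genuine minimal projection $v \in \mathcal{P}_{2,+}$ satisfying $\langle b', v \rangle = \langle b'*v*b' \rangle = \langle r_{b'}(x) \rangle = b$.

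The main obstacle is bookkeeping rather than conceptual: I must be careful that the three-step chain $N \subseteq P \subseteq K \subseteq M$ is handled consistently, since Theorem \ref{2iso} is stated for a five-term chain $N \subseteq P \subseteq K \subseteq Q \subseteq M$ and I need to instantiate it so that the relative identity $e^K_K$ maps to $b = e^M_K$ under $r_{b'}$, while the relevant domain is the corner $b'*\mathcal{P}_{2,+}*b'$. Concretely, Lemmas \ref{lem1} and \ref{lem2} were proved with $\id$ and a top biprojection; I would need their obvious relativizations, where $\id$ is replaced by $b$ and $e_1$ is replaced by $b'$, but these go through verbatim because $b'$ plays the role of the minimal biprojection inside $\mathcal{P}(b',b)$ and $b' * v * b' \preceq b' * \mathcal{P}_{2,+} * b'$ is the correct corner. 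Once the roles are fixed, the computation $b' \vee \langle v \rangle = \langle b',v \rangle = \langle b'*v*b' \rangle = \langle r_{b'}(x)\rangle = r_{b'}(\langle x\rangle) = r_{b'}(b) = b$ closes the argument, exactly paralleling the displayed chain in the proof of Theorem \ref{mainsubfactor}.

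In summary, I would phrase the proof as: restate w-cyclicity of $\mathcal{P}(b',b)$ to get a minimal projection $x$ generating the relative identity; apply Theorem \ref{2iso} with $m = r_{b'}$ to get $\langle r_{b'}(x)\rangle = b$; then apply (the relativized versions of) Lemmas \ref{lem1} and \ref{lem2} to replace $r_{b'}(x)$ by an honest minimal projection $v \in \mathcal{P}_{2,+}$ with $\langle b', v\rangle = b$. The only delicate point to verify explicitly is that $r_{b'}(x)$, which is merely a positive element, admits a minimal subprojection $v$ for which $b'*v*b'$ is a positive multiple of $r_{b'}(x)$ — but this is precisely the content of Lemma \ref{lem1} applied in the relative setting, so no new argument is required.
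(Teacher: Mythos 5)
Your proposal is correct in substance and uses the same toolkit as the paper (Theorem \ref{2iso} together with the machinery behind Proposition \ref{right}), but it runs the reduction in the opposite order, and that difference determines how much must be re-proved. The paper localizes first: it applies $l_b$ to identify $\mathcal{P}_{2,+}(e_1,b)$ with the corner $b\mathcal{P}_{2,+}b$, sets $a=l_b^{-1}(b')$, and then invokes Proposition \ref{right} \emph{as a black box} inside the planar algebra $\mathcal{P}(e_1,b)$ --- legitimate since that proposition holds for any irreducible subfactor planar algebra and any biprojection in it --- obtaining a minimal projection $u'$ with $\langle a,u'\rangle=l_b^{-1}(b)$; pushing forward with $l_b$ (which preserves generation and carries minimal projections of $b\mathcal{P}_{2,+}b$ to minimal projections of $\mathcal{P}_{2,+}$) finishes the proof with no new lemmas. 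You instead transport into the ambient algebra first and then re-run the argument of Proposition \ref{right} there, which forces the ``relativized'' lemmas. Two remarks on that route: (i) your map $r_{b'}$ is not literally defined on $\mathcal{P}_{2,+}(b',b)$, since the domain of $r_{b'}$ is $\mathcal{P}_{2,+}(b',\id)$; what you need is a composition of an $l$-type embedding with $r_{b'}$, which is exactly the composition the paper's ordering is designed to tame; (ii) Lemma \ref{lem2} needs no relativization at all (it holds for every biprojection --- just substitute $b'$), but the relativized Lemma \ref{lem1} is a genuine, if routine, extra verification: one must check that the image of your composed map is a coproduct corner of the form $b'*(b\mathcal{P}_{2,+}b)*b'$, and that any minimal $v\preceq\rho(x)$ has range below $b$ so that $b'*v*b'$ lands back in that corner (this uses Lemma \ref{prodcoprod} and Theorem \ref{biproj}) before the positivity/minimality argument can be repeated. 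So your proof is valid and somewhat more explicit about where the minimal projection lives; the paper's arrangement buys brevity by reusing Proposition \ref{right} verbatim instead of restating its ingredients.
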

\begin{proof} Take the von Neumann algebras isomorphisms (Theorem \ref{2iso}) $$l_{b}: \mathcal{P}_{2,+}(e_1 , b) \to b\mathcal{P}_{2,+}b$$ and, with $a = l_{b}^{-1}(b')$, $$r_{a}: \mathcal{P}_{2,+}(b' , b) \to a * \mathcal{P}_{2,+}(e_1 , b) * a.$$ Then, by assumption, the planar algebra $\mathcal{P}(b' , b)$ is w-cyclic, so by Proposition \ref{right}, $\exists u' \in \mathcal{P}_{2,+}(e_1,b)$ minimal projection such that $$\langle a, u' \rangle = l_b^{-1}(b).$$ Then by applying the map $l_{b}$ and Theorem \ref{2iso}, we get $$b=\langle l_{b}(a), l_{b}(u') \rangle = \langle b', u \rangle$$ with $u=l_{b}(u')$ a minimal projection of $b\mathcal{P}_{2,+}b$, so of $\mathcal{P}_{2,+}$.
\end{proof}
\noindent We can idem assume $r_a^{-1}(a*l_b^{-1}(u)*a)$ minimal projection of $\mathcal{P}_{2,+}(b' , b)$.

 \section{Applications to finite group theory}

We will give several group theoretic translations of Theorem \ref{mainsubfactor} and Corollary \ref{upper}, giving a new link between combinatorics and representations in finite group theory. Let $G$ be a finite group acting outerly on the hyperfinite ${\rm II}_1$ factor $R$.
Note that the subfactor $(R \subseteq R \rtimes G)$ is w-cyclic if and only if $G$ is cyclic. More generally, for $H$ a subgroup of $G$:
\begin{theorem} \label{rwgrp}
The subfactor $(R \rtimes H \subseteq R \rtimes G)$ is w-cyclic if and only if $[H,G]$ is $H$-cyclic.
\end{theorem}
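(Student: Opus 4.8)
The plan is to prove the equivalence by translating the combinatorial condition ``$[H,G]$ is $H$-cyclic'' into the operator-algebraic condition ``the subfactor is w-cyclic'' through the Galois correspondence and the dictionary between subgroups, intermediate subfactors, and biprojections. First I would recall that by the Galois correspondence \cite{nk} the lattice of intermediate subfactors of $(R\rtimes H\subseteq R\rtimes G)$ is isomorphic to the interval $[H,G]$ in the subgroup lattice, with the intermediate subfactor $R\rtimes K$ corresponding to $K\in[H,G]$, and hence (via Theorem \ref{bisch}) to a biprojection $b_K=e^{R\rtimes G}_{R\rtimes K}$ in $\mathcal{P}_{2,+}(R\rtimes H\subseteq R\rtimes G)$. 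This gives a lattice isomorphism $[H,G]\cong[e_1,\id]$.

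Next I would identify the $2$-box space concretely. For a group subfactor the relative commutant $\mathcal{P}_{2,+}=(R\rtimes H)'\cap(R\rtimes G)_1$ is spanned by the partial isometries/group elements indexed by the double cosets $H\backslash G/H$ (equivalently, the coproduct algebra is the double-coset algebra, or the group algebra structure on $H\backslash G/H$). The minimal projections generating biprojections correspond to coset data: the biprojection generated by a positive $2$-box is the smallest $K\in[H,G]$ whose coset algebra contains the support, and in group terms $\langle a\rangle$ corresponds to the subgroup generated by the cosets appearing in the support of $a$. Concretely I would argue that for a single element $g\in G$ the minimal projection ``supported at'' the double coset of $g$ generates the biprojection $b_{\langle H,g\rangle}$, using Definition \ref{gener} together with the fact that the coproduct powers $a^{*k}$ of such an element accumulate precisely the cosets of the subgroup $\langle Hg\rangle=\langle H,g\rangle$ (the equality noted in Definition \ref{Hcy}).

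With this dictionary in hand the equivalence is almost formal. For the forward direction: if $(R\rtimes H\subseteq R\rtimes G)$ is w-cyclic, there is a minimal projection $u\in\mathcal{P}_{2,+}$ with $\langle u\rangle=\id$; such a $u$ is supported (after possibly passing to a summand, as in Proposition \ref{mini}) at the double coset of some $g\in G$, and $\langle u\rangle=\id$ translates to $\langle H,g\rangle=G$, i.e. $[H,G]$ is $H$-cyclic. Conversely, if there is $g$ with $\langle Hg\rangle=G$, then the minimal projection attached to the double coset $HgH$ generates $\id$ under the isomorphism of lattices, so the subfactor is w-cyclic. In both directions I would use that ``$\langle u\rangle=\id$'' (a biprojection-lattice statement) corresponds under the isomorphism $[e_1,\id]\cong[H,G]$ exactly to ``$\langle H,g\rangle=G$''.

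The main obstacle I expect is making precise the correspondence between minimal $2$-box projections and group elements $g$, since a minimal projection need not be exactly a single double-coset element: the relative commutant $\mathcal{P}_{2,+}$ for a group subfactor is a (possibly noncommutative) $\star$-algebra, and one must check that \emph{some} minimal projection generating the identity can be chosen to correspond to a single coset $Hg$, and conversely that the coset element $Hg$ furnishes a \emph{minimal} projection with the correct generated biprojection. I would handle this by computing the coproduct structure explicitly (the coproduct of coset-indicator operators corresponds to the product of cosets in $G$, so that $\langle a\rangle$ matches the subgroup generated by the support of $a$), and by invoking Proposition \ref{mini} to reduce from central projections to minimal ones when extracting $g$ from a given w-cyclic witness. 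Once the coproduct-to-subgroup-multiplication translation is verified, the two implications follow directly.
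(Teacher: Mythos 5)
Your strategy is sound and the facts it rests on are true, but it is a genuinely different route from the paper's. The paper never analyzes the $2$-box space of $(R\rtimes H\subseteq R\rtimes G)$ intrinsically: it regards that subfactor as the upper piece $\mathcal{P}(b,\id)$ of the ambient group subfactor planar algebra $\mathcal{P}(R\subseteq R\rtimes G)$, whose $2$-box space is the \emph{abelian} algebra $\bigoplus_{g\in G}\mathbb{C}e_g\simeq\mathbb{C}^G$, and then invokes Proposition \ref{right}: w-cyclicity of $\mathcal{P}(b,\id)$ holds if and only if there is a minimal projection $v\in\mathbb{C}^G$ (necessarily $v=e_g$ for some $g\in G$) with $\langle b,v\rangle=\id$ and $r_b^{-1}(b*v*b)$ a positive multiple of a minimal projection; the first condition becomes $\langle H,g\rangle=G$, and the side condition is checked with the observation that $Hg'H=HgH$ for all $g'\in HgH$. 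You instead work inside $\mathcal{P}_{2,+}(R\rtimes H\subseteq R\rtimes G)$ itself, identifying it with the double-coset algebra, minimal projections with double cosets, and generated biprojections with generated subgroups. That is viable and conceptually more self-contained, but what it costs is exactly what the paper's detour through $\mathbb{C}^G$ avoids: you must actually prove the structure theorem that $(R\rtimes H)'\cap(R\rtimes G)_1\simeq\mathbb{C}[H\backslash G/H]$ (functions on double cosets under the von Neumann product) and that the coproduct of double-coset indicators is a positive combination of the indicators of the double cosets contained in the product set, after which $\langle p_{HgH}\rangle=e^{R\rtimes G}_{R\rtimes\langle H,g\rangle}$ follows from Definition \ref{gener}. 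Neither fact is established anywhere in the paper, so in your write-up they would be the real content of the proof.

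One correction to your self-identified ``main obstacle'': the worry that $\mathcal{P}_{2,+}$ might be noncommutative, and the plan to repair this with Proposition \ref{mini}, are both off target. For the crossed-product side the relative commutant is $\mathrm{End}_{N\text{-}N}(L^2(M))$ with $N=R\rtimes H$, $M=R\rtimes G$, and outerness of the action forces it to be abelian, spanned by the double-coset indicator projections; so every minimal projection \emph{is} a double-coset indicator and no passage to summands is needed. Conversely, if the algebra genuinely were noncommutative --- as happens on the fixed-point side $(R^G\subseteq R^H)$, where the relevant $2$-box space sits inside $\mathbb{C}G$ --- Proposition \ref{mini} would not rescue your dictionary: minimal projections would then carry representation-theoretic data rather than a group element, and the correct translation of w-cyclicity is linear primitivity (Theorem \ref{lwgrp}), not $H$-cyclicity. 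So the abelianness of the relative commutant is not an optional verification in your approach; it is the load-bearing step, and it is the computation (or a citation for it) that must be supplied.
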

\begin{proof}
By Proposition \ref{right},  $(R \rtimes H \subseteq R \rtimes G)$ is w-cyclic if and only if $$\exists u \in \mathcal{P}_{2,+}(R \subseteq R \rtimes G) \simeq \bigoplus_{g \in G} \mathbb{C}e_g \simeq \mathbb{C}^G$$ minimal projection such that  $\langle b , u \rangle = \id$ with $b=e^{R \rtimes G}_{R \rtimes H}$ and $r_b^{-1}(b*u*b)$ minimal projection, if and only if $\exists g \in G$ such that $\langle H,g \rangle = G$, because $u$ is of the form $e_g$ and  $\forall g' \in HgH$, $Hg'H = HgH$.  \end{proof}

\noindent Theorem \ref{ore} is a translation of Theorem \ref{mainsubfactor} using Theorem \ref{rwgrp}.

\begin{corollary} \label{gene}
The minimal cardinal for a generating set of $G$ is at most the minimal length $\ell$ for an ordered chain of subgroups  $$\{e\}=H_0 < H_1 < \dots < H_{\ell} = G$$  such that $[H_i,H_{i+1}]$ is top Boolean.
\end{corollary}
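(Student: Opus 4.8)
The plan is to translate Corollary \ref{gene} as a group-theoretic reading of the subfactor statement in Corollary \ref{upper}, applied to the subfactor $(R \subseteq R \rtimes G)$. First I would recall the dictionary established earlier: the biprojection lattice $[e_1,\id]$ of the planar algebra $\mathcal{P}(R \subseteq R \rtimes G)$ is isomorphic, via Galois correspondence and Bisch's Theorem \ref{bisch}, to the subgroup lattice $\mathcal{L}(G)$, with $e_1 = e^{R\rtimes G}_R$ corresponding to $\{e\}$ and $\id = e^{R \rtimes G}_{R \rtimes G}$ corresponding to $G$. Under this isomorphism a biprojection $b_i$ corresponds to a subgroup $H_i$, an interval $[b_i,b_{i+1}]$ of biprojections corresponds to the interval $[H_i,H_{i+1}]$ of subgroups, and the property of being top Boolean is preserved because it is a purely lattice-theoretic notion (defined in terms of the meet of coatoms and the Boolean structure of the resulting top interval). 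Thus a chain of biprojections $e_1 = b_0 < \cdots < b_\ell = \id$ with each $[b_i,b_{i+1}]$ top Boolean corresponds exactly to a chain of subgroups $\{e\} = H_0 < \cdots < H_\ell = G$ with each $[H_i,H_{i+1}]$ top Boolean, so the two minimal lengths $\ell$ coincide.

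Next I would connect the quantity $r$ from Corollary \ref{upper} to the minimal cardinal of a generating set of $G$. By Corollary \ref{upper}, $r$ is the minimal number of minimal projections $u_1,\dots,u_r \in \mathcal{P}_{2,+}$ with $\langle u_1,\dots,u_r\rangle = \id$. For the group subfactor $(R \subseteq R \rtimes G)$ the relevant $2$-box space is the abelian algebra $\mathbb{C}^G \simeq \bigoplus_{g \in G}\mathbb{C}e_g$, whose minimal projections are exactly the $e_g$, as used in the proof of Theorem \ref{rwgrp}. Under the generation dictionary of that proof, the biprojection $\langle e_{g_1},\dots,e_{g_r}\rangle$ generated by a collection of such minimal projections corresponds precisely to the subgroup $\langle g_1,\dots,g_r\rangle$ (with $e_1 = e^{R\rtimes G}_R$ playing the role of the trivial subgroup $\{e\}$), so $\langle e_{g_1},\dots,e_{g_r}\rangle = \id$ holds if and only if $\langle g_1,\dots,g_r\rangle = G$. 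Hence the minimal number $r$ of minimal projections generating $\id$ equals the minimal cardinal of a generating set of $G$.

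Combining the two identifications gives the result directly: the minimal cardinal of a generating set of $G$ equals $r$, which by Corollary \ref{upper} is at most $\ell$, which equals the minimal length of a chain of subgroups with each consecutive interval top Boolean. I expect the main obstacle to be the careful verification that the generation operation $\langle \cdot \rangle$ on minimal projections $e_g$ really matches subgroup generation, i.e. that $\langle e_{g_1},\dots,e_{g_r}\rangle$ corresponds to $\langle g_1,\dots,g_r\rangle$ and not to some larger or smaller biprojection; this rests on the coproduct structure on $\mathbb{C}^G$, where $e_g * e_h$ is supported on $gh$, so that iterated coproducts of the $e_{g_i}$ range exactly over the subgroup they generate, matching Definition \ref{gener}. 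The remaining step, transporting ``top Boolean'' across the lattice isomorphism, is routine since it is invariant under lattice isomorphism. Therefore Corollary \ref{gene} follows.
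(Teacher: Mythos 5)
Your proposal is correct and takes essentially the same route as the paper, whose proof is simply ``Immediate from Corollary \ref{upper} and Theorem \ref{rwgrp}'': you apply Corollary \ref{upper} to $\mathcal{P}(R \subseteq R \rtimes G)$ and translate via the same dictionary that underlies Theorem \ref{rwgrp} (minimal projections are the $e_g$, the coproduct satisfies $e_g * e_h \propto e_{gh}$ so that $\langle e_{g_1},\dots,e_{g_r}\rangle$ corresponds to $\langle g_1,\dots,g_r\rangle$, and biprojections correspond to subgroups with top Booleanness preserved under the lattice isomorphism). Your write-up just makes explicit the verifications the paper leaves implicit in the word ``immediate.''
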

\begin{proof} Immediate from Corollary \ref{upper} and Theorem \ref{rwgrp}.  \end{proof}

 \begin{definition} \label{fixstab} Let $W$ be a representation of a group $G$, $K$ a subgroup of $G$, and $X$ a subspace of $W$. Let the \emph{fixed-point subspace} be $$W^{K}:=\{w \in W \ \vert \  kw=w \ , \forall k \in K  \}$$ and the \emph{pointwise stabilizer subgroup} $$G_{(X)}:=\{ g \in G \  \vert \ gx=x \ , \forall x \in X \}$$  \end{definition} 

\begin{lemma} \label{corrminstab}
Let $p \in \mathcal{P}_{2,+}(R^G \subseteq R)$ be the projection on a space $X$ and $b$ the biprojection of a subgroup $H$ of $G$. Then $$p \le b  
\Leftrightarrow H \subseteq G_{(X)}.$$ It follows that the biprojection $\langle p \rangle$ corresponds to the subgroup $G_{(X)}$.
\end{lemma}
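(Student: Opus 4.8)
The plan is to prove Lemma \ref{corrminstab} by analyzing the subfactor $(R^G \subseteq R)$, whose $2$-box space $\mathcal{P}_{2,+}$ is canonically identified with the group algebra $\mathbb{C}[G]$ (acting on $R$ by the outer action), and where biprojections correspond to subgroups $H \subseteq G$ via the Jones projection $e^R_{R^H}$. The key is to make precise how the projection $p$ onto a subspace $X \subseteq W$ sits inside this picture. First I would recall that, in the crossed-product / fixed-point setting, a minimal central projection of $\mathcal{P}_{2,+}(R^G \subseteq R)$ corresponds to an irreducible representation, and more generally a projection $p$ on $X$ can be tracked through the action of group elements $g \in G$ viewed as (multiples of) minimal projections $e_g$ in the $2$-box space.

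The heart of the argument is the equivalence $p \le b \Leftrightarrow H \subseteq G_{(X)}$. I would prove it as a chain of reformulations. The biprojection $b$ associated to $H$ is the Jones projection onto the $H$-invariant part, so in operator terms $p \le b$ says precisely that the range $X$ of $p$ lies in the subspace fixed by the action of $H$; unwinding the identification between the $2$-box space and the group action, $p \le b$ is equivalent to the statement that every $h \in H$ acts as the identity on $X$, i.e. $hx = x$ for all $x \in X$ and all $h \in H$. By Definition \ref{fixstab}, this is exactly the condition $H \subseteq G_{(X)}$. The two directions are then symmetric: $H \subseteq G_{(X)}$ forces every element of $H$ to fix $X$ pointwise, which places $X$ inside the range of $b$, giving $p \le b$; conversely $p \le b$ confines $X$ to the $H$-fixed subspace, so each $h \in H$ fixes $X$ pointwise and hence $H \subseteq G_{(X)}$.

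For the final assertion, that $\langle p \rangle$ corresponds to the subgroup $G_{(X)}$, I would argue using the characterization of $\langle p \rangle$ (Definition \ref{gener}) as the smallest biprojection $b$ with $b \succeq p$, equivalently the smallest $b$ with $p \le b$. By the equivalence just established, the biprojections $b$ satisfying $p \le b$ are exactly those coming from subgroups $H$ with $H \subseteq G_{(X)}$; since the correspondence between subgroups and biprojections is an anti-nothing, order-preserving lattice isomorphism (Bisch's Theorem \ref{bisch} together with Watatani \cite{wa}), the smallest such biprojection is the one attached to the largest such subgroup, namely $G_{(X)}$ itself. Hence $\langle p \rangle = e^R_{R^{G_{(X)}}}$, which is the stated conclusion.

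The main obstacle I anticipate is making the identification between the abstract $2$-box projection $p$ and its geometric range $X \subseteq W$ fully rigorous, i.e. pinning down the precise dictionary between $\mathcal{P}_{2,+}(R^G \subseteq R)$, the group action on the representation $W$, and the meaning of ``$p$ is the projection on $X$.'' Everything downstream is a formal consequence of the order-isomorphism between biprojections and subgroups, but the translation of the operator inequality $p \le b$ into the pointwise-fixing condition is where one must be careful that the Jones projection $b = e^R_{R^H}$ really implements projection onto the $H$-fixed vectors in the relevant module, and that minimality of $\langle p \rangle$ matches the maximality of $G_{(X)}$ on the nose.
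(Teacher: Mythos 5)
Your outline follows the same route as the paper: translate $p \le b$ into the statement that $X$ lies in the $H$-fixed subspace, then obtain the claim about $\langle p \rangle$ from the minimality of the generated biprojection together with the subgroup--biprojection correspondence. The second half of your argument is sound and in fact more explicit than the paper's (which just says ``the result follows''); note only that the correspondence $K \mapsto e^R_{R^K}$ for $(R^G \subseteq R)$ is order-\emph{reversing}, not order-preserving as you wrote, though your actual use of it (smallest biprojection attached to the largest subgroup $G_{(X)}$) is the correct one.

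The genuine gap is exactly the step you defer as the ``main obstacle'': you never establish that $b = e^R_{R^H}$, viewed as an element of $\mathcal{P}_{2,+}(R^G \subseteq R) \simeq \mathbb{C}G$ acting on the representation space containing $X$, is the projection onto the $H$-fixed vectors. Calling $b$ ``the Jones projection onto the $H$-invariant part'' describes its action on $L^2(R)$, not on the module where $X$ lives; transporting that description across the identification $\mathcal{P}_{2,+}\simeq\mathbb{C}G$ is precisely the content of the lemma, so as written your chain of reformulations assumes what it must prove. The paper closes this with one concrete computation: since $b$ is the biprojection of $H$, there is $\lambda>0$ with $\mathcal{F}^{-1}(b) = \lambda \sum_{h \in H} e_h$, so $b$ is a positive multiple of $\sum_{h\in H}\mathcal{F}(e_h)$, where $\mathcal{F}(e_h)$ implements the action of $h$ and satisfies $\mathcal{F}(e_h)b = b$ for $h \in H$; hence if $bx = x$ then $\mathcal{F}(e_h)x = \mathcal{F}(e_h)(bx) = (\mathcal{F}(e_h)b)x = bx = x$, and conversely if every $h \in H$ fixes $x$ then $bx = \lambda\lvert H\rvert x$, which forces $bx = x$ since $b$ is a projection. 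With this identification of $b$ as the averaging idempotent of $H$ inside $\mathbb{C}G$, the rest of your argument goes through as stated.
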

\begin{proof} First, $p \le b $ if and only if $ \forall x \in X, bx=x $. Now, there is $\lambda > 0$ such that $$\mathcal{F}^{-1}(b) = \lambda \sum_{h \in H} e_h.$$ So, if $bx=x$, then $\mathcal{F}(e_h) x = \mathcal{F}(e_h) (bx) =  (\mathcal{F}(e_h)b)x = bx = x.$   Thus,  $$p \le b   \Leftrightarrow \forall h \in H, \ \forall x \in X, \   \mathcal{F}(e_h) x = x  \Leftrightarrow  \forall h \in H, \ h \in G_{(X)}.$$ The result follows.   
\end{proof}

\noindent We deduce an amusing alternative proof of a well-known result \cite[\S 226]{bu}:

\begin{corollary} A complex representation $V$ of a finite group $G$ is faithful if and only if for any irreducible complex representation $W$ there is an integer $n$ such that $W \preceq V^{\otimes n}$.
\end{corollary}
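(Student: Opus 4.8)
The plan is to run the entire argument inside the ``dual'' planar algebra $\mathcal{P}(R^G \subseteq R)$, whose $2$-box space $\mathcal{P}_{2,+}(R^G \subseteq R)$ has its minimal central projections indexed by the irreducible complex representations $W$ of $G$, and whose biprojection lattice $[e_1,\id]$ is anti-isomorphic to $\mathcal{L}(G)$ via $K \mapsto R^K$. Under this anti-isomorphism the trivial subgroup $\{e\}$, giving $R^{\{e\}}=R=M$, corresponds to the top biprojection $\id$. First I would attach to the representation $V$ its support projection $p \in \mathcal{P}_{2,+}(R^G \subseteq R)$, namely the projection onto the $G$-submodule $X = V$ in the standard model, so that $G_{(X)} = \ker \rho_V$. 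Then by definition $V$ is faithful if and only if $G_{(X)} = \{e\}$.

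Next I would apply Lemma \ref{corrminstab}, which identifies the biprojection $\langle p \rangle$ with the pointwise stabilizer $G_{(X)} = \ker \rho_V$. Combining this with the orientation remark above yields the first equivalence,
$$ V \text{ faithful} \iff G_{(X)} = \{e\} \iff \langle p \rangle = \id. $$

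It then remains to read off $\langle p \rangle = \id$ in representation-theoretic terms. By Definition \ref{gener}, $\langle p \rangle$ is the (stabilized) range projection of $\sum_{k=1}^{n} p^{*k}$, and the key input is the fusion dictionary supplied by the appendix: the coproduct $*$ implements the tensor product, so that $R(p^{*k})$ is exactly the support central projection of $V^{\otimes k}$, giving $p_W \preceq p^{*k} \iff W \preceq V^{\otimes k}$. Hence $\langle p \rangle = \bigvee_k R(p^{*k})$ is the sum of those $p_W$ for which $W \preceq V^{\otimes k}$ for some $k \geq 1$, and so $\langle p \rangle = \id = \sum_W p_W$ holds if and only if every irreducible $W$ satisfies $W \preceq V^{\otimes n}$ for some $n$. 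Chaining the two equivalences gives the statement.

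The main obstacle is precisely this last fusion correspondence: one must ensure that the coproduct of the representation projections reproduces the support of the tensor product (so that $R(p^{*k})$ is the support of $V^{\otimes k}$), which is exactly the content of the appendix in the irreducible depth-$2$ setting. A secondary, purely bookkeeping point is the duality/orientation check that $\{e\} \leftrightarrow \id$ and that the trivial representation $\leftrightarrow e_1$ (so that $\langle p\rangle$ is genuinely a biprojection), together with the observation that replacing the subspace projection onto $V$ by its central support leaves $\langle p \rangle$ unchanged, which is immediate from Lemma \ref{th} since only the range and its $*$-powers intervene.
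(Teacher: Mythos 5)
Your architecture is in fact the paper's own: translate faithfulness through Lemma \ref{corrminstab} into the statement $\langle p\rangle=\id$ inside $\mathcal{P}_{2,+}(R^G\subseteq R)\simeq\mathbb{C}G$, then compute the generated biprojection with Definition \ref{gener} and the fusion dictionary of Corollary \ref{coprofusion}. The gap lies in your setup and in the ``bookkeeping point'' meant to repair it. First, for a general $V$ there is no projection $p\in\mathcal{P}_{2,+}(R^G\subseteq R)$ whose range is a $G$-submodule isomorphic to $V$: writing $\mathbb{C}G\simeq\bigoplus_i \mathrm{End}(W_i)$, the range of a projection $p$ is $G$-invariant precisely when $p$ is central, and then the range is determined by a \emph{set} of irreducible constituents (multiplicities cannot be prescribed). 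Second, and more seriously, your claim that replacing $p$ by its central support $z(p)$ ``leaves $\langle p\rangle$ unchanged'' and is ``immediate from Lemma \ref{th}'' is false: Lemma \ref{th}(3) only yields $\langle p\rangle\le\langle z(p)\rangle$, and equality genuinely fails for non-central projections. Take $G=S_3$, $W$ its $2$-dimensional irreducible representation, and $u$ the rank-one projection in the block $\mathrm{End}(W)$ onto a line $\mathbb{C}\xi$ fixed by a transposition; by Lemma \ref{corrminstab}, $\langle u\rangle$ is the biprojection of the pointwise stabilizer $G_{(\mathbb{C}\xi)}$, a copy of $S_2$, whereas $\langle z(u)\rangle=\langle p_W\rangle=\id$ since $W$ is faithful. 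This discrepancy (vector stabilizers versus representation kernels) is exactly what separates linear primitivity (Theorem \ref{lwgrp}, Definition \ref{linprim}) from faithfulness, so it cannot be waved away; and the direction of the corollary that needs it is the ``if'' direction, which requires $\langle z(p)\rangle=\id\Rightarrow\langle p\rangle=\id$, i.e. precisely the inequality that Lemma \ref{th} does not provide.

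The repair is what the paper does: never introduce a non-central projection. Let $V_1,\dots,V_s$ be the distinct irreducible components of $V$ and $p_1,\dots,p_s$ the corresponding minimal central projections of $\mathbb{C}G$. Then $\ker(\pi_V)=\bigcap_i\ker(\pi_{V_i})=\bigwedge_i G_{(V_i)}$, so by Lemma \ref{corrminstab} and the order-reversing correspondence between subgroups and biprojections, $V$ is faithful if and only if $\bigvee_i\langle p_i\rangle=\langle p_1,\dots,p_s\rangle=\id$. By Definition \ref{gener} this biprojection is the range projection of $\sum_{n=1}^{N}(p_1+\dots+p_s)^{*n}$ for $N$ large, and Corollary \ref{coprofusion} identifies that range with $\sum\{p_W : W\preceq V^{\otimes n}\text{ for some }n\le N\}$, multiplicities being irrelevant both to kernels and to which constituents occur in tensor powers. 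Once $p$ is central from the start, the remainder of your outline, including the equivalence $p_W\preceq p^{*n}\Leftrightarrow W\preceq V^{\otimes n}$, is correct.
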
  
\begin{proof} Let $V$ be a complex representation of $G$, and let $V_1,\dots, V_s$ be (equivalence class representatives of) its irreducible components. Then $$\ker(\pi_V) = \bigcap_{i=1}^s \ker(\pi_{V_i})= \bigwedge_{i=1}^s G_{(V_i)}.$$ Now, $V$ is faithful if and only if $\ker(\pi_V) = \{ e\}$, if and only if (by Lemma \ref{corrminstab}) $\bigvee_{i=1}^s \langle p_i \rangle = \id$, with $p_i \in \mathcal{P}_{2,+}(R^G \subseteq R) \simeq \mathbb{C}G$, the minimal central projection on $V_i$. But by Definition \ref{gener}, $\langle p_1, \dots, p_s \rangle$ is the range projection of $$\sum_{n=1}^N(p_1 + \cdots + p_s)^{*n}$$ for $N$ large enough.  The result follows by Corollary \ref{coprofusion}.
\end{proof}

\begin{definition} \label{linprimdef}
The group $G$ is called \emph{linearly primitive} if it admits an irreducible complex representation $V$ which is faithful, i.e. $G_{(V)} = \{ e \}$.
\end{definition}

\begin{definition} \label{linprim}
The interval $[H,G]$ is called \emph{linearly primitive} if there is an irreducible complex representation $V$ of $G$ such that $G_{(V^H)} = H$.
\end{definition}

\begin{theorem} \label{lwgrp}
The subfactor $(R^G \subseteq R^H)$ is w-cyclic if and only if $[H,G]$ is linearly primitive.
\end{theorem}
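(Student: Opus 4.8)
The plan is to identify $(R^G \subseteq R^H)$ with a lower intermediate subfactor of the (irreducible, depth $2$) fixed-point subfactor $(R^G \subseteq R)$ and then convert w-cyclicity into a statement about vector stabilizers via Lemma \ref{corrminstab}. Concretely, work inside $\mathcal{P}_{2,+}(R^G \subseteq R) \simeq \mathbb{C}G$ and let $b$ be the biprojection of the subgroup $H$, so that $(R^G \subseteq R^H) = \mathcal{P}(e_1,b)$. By Proposition \ref{left}, this planar algebra is w-cyclic if and only if there is a minimal projection $u \in \mathcal{P}_{2,+}$ with $\langle u \rangle = b$.

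Next I would read off what such a $u$ means representation-theoretically. A minimal projection $u$ of $\mathbb{C}G \simeq \bigoplus_V \mathrm{End}(V)$ is a rank-one projection in one simple block, hence the projection onto a line $\mathbb{C}\xi$ inside an irreducible complex representation $V$ of $G$, i.e. $\im(u) = \mathbb{C}\xi$. By Lemma \ref{corrminstab}, the biprojection $\langle u \rangle$ corresponds to the pointwise stabilizer $G_{(\mathbb{C}\xi)}$; since the Galois correspondence between biprojections and subgroups is a bijection, the equality $\langle u \rangle = b$ holds exactly when $G_{(\mathbb{C}\xi)} = H$. Thus $(R^G \subseteq R^H)$ is w-cyclic if and only if there exist an irreducible $V$ and a vector $\xi \in V$ with $G_{(\mathbb{C}\xi)} = H$.

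It then remains to match this with linear primitivity (Definition \ref{linprim}), namely the existence of an irreducible $V$ with $G_{(V^H)} = H$. The forward implication is easy: if $G_{(\mathbb{C}\xi)} = H$ then $\xi$ is $H$-fixed, so $\xi \in V^H$; since $H \subseteq G_{(V^H)}$ always and, by monotonicity of the pointwise stabilizer under $\mathbb{C}\xi \subseteq V^H$, $G_{(V^H)} \subseteq G_{(\mathbb{C}\xi)} = H$, we get $G_{(V^H)} = H$ for the same $V$. The hard part will be the converse: upgrading the global equality $G_{(V^H)} = H$ to the existence of a single vector with stabilizer exactly $H$. Here I would argue that for every subgroup $K$ with $H \subsetneq K \leq G$ the fixed space $V^K$ is a \emph{proper} subspace of $V^H$ (equality $V^K = V^H$ would force $K \subseteq G_{(V^H)} = H$, a contradiction), so that $V^H$ is not the union of the finitely many proper subspaces $V^K$ since $\mathbb{C}$ is infinite. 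Any $\xi \in V^H \setminus \bigcup_{K \supsetneq H} V^K$ then satisfies $G_{(\mathbb{C}\xi)} = H$: indeed $H \subseteq G_{(\mathbb{C}\xi)}$ as $\xi \in V^H$, while a $g \in G_{(\mathbb{C}\xi)}$ with $g \notin H$ would place $\xi$ in $V^{\langle H,g \rangle}$ with $\langle H,g \rangle \supsetneq H$, contradicting the choice of $\xi$. This produces the required minimal projection $u$ and closes the equivalence.
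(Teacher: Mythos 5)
Your proposal is correct, and its skeleton is the same as the paper's: reduce via Proposition \ref{left} to the existence of a minimal projection $u \in \mathcal{P}_{2,+}(R^G \subseteq R) \simeq \mathbb{C}G$ with $\langle u \rangle = b$, identify $u$ with the projection onto a line $\mathbb{C}\xi$ inside an irreducible representation $V$, and convert $\langle u \rangle = b$ into $G_{(\mathbb{C}\xi)} = H$ by Lemma \ref{corrminstab} together with the bijectivity of the Galois correspondence. Your forward implication, $H \subseteq G_{(V^H)} \subseteq G_{(\mathbb{C}\xi)} = H$, is exactly the paper's closing sentence, where $V$ arises as the range of the central support of $u$. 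Where you genuinely add something is the converse: the paper's written proof stops after that forward implication, leaving implicit the passage from the global equality $G_{(V^H)} = H$ to a single vector $\xi \in V^H$ whose stabilizer is exactly $H$. Your argument --- each $V^K$ with $H \subsetneq K \leq G$ is a \emph{proper} subspace of $V^H$ (else $K \subseteq G_{(V^H)} = H$), and a nonzero complex vector space is not a finite union of proper subspaces --- is the right way to fill this in, and it is the same avoidance device the paper itself uses in the proof of Proposition \ref{mini}; equivalently, one could avoid, for each $g \notin H$, the proper subspace $V^H \cap \ker(\pi_V(g) - \id)$. So your write-up follows the paper's route but is, if anything, more complete on the direction the paper glosses over.
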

\begin{proof}
By Proposition \ref{left}, $(R^G \subseteq R^H)$ is w-cyclic if and only if $$\exists u \in \mathcal{P}_{2,+}(R^G \subseteq R) \simeq \mathbb{C}G$$ minimal  projection such that  $\langle u \rangle = e^R_{R^H}$, if and only if,  by Lemma \ref{corrminstab}, $H=G_{(U)}$ with $U = \im(u)$. Let $p$ be the central support of $u$, and $V$ its range (irreducible). Then $H \subset G_{(V^H)} \subset G_{(U)}$, so $H = G_{(V^H)}$.   \end{proof}

\begin{corollary} \label{wgrp2}  The subfactor $(R^G \subseteq R)$ is w-cyclic if and only if $G$ is  linearly primitive. \end{corollary}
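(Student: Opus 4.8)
The plan is to obtain this as the special case $H = \{e\}$ of Theorem \ref{lwgrp}, after reconciling the two notions of linear primitivity (Definitions \ref{linprimdef} and \ref{linprim}). First I would instantiate Theorem \ref{lwgrp} at the trivial subgroup $H = \{e\}$. Since the fixed-point algebra of $R$ under the trivial action is all of $R$, we have $R^{\{e\}} = R$, so the subfactor $(R^G \subseteq R^H)$ becomes exactly $(R^G \subseteq R)$, and its w-cyclicity is therefore equivalent to the interval $[\{e\}, G]$ being linearly primitive.

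Next I would unwind what linear primitivity of $[\{e\}, G]$ means. By Definition \ref{linprim}, it asserts the existence of an irreducible complex representation $V$ of $G$ with $G_{(V^{\{e\}})} = \{e\}$. But the fixed-point subspace $V^{\{e\}}$ consists of all $w \in V$ satisfying $kw = w$ for all $k$ in the trivial group, a vacuous condition, so $V^{\{e\}} = V$. Hence $G_{(V^{\{e\}})} = G_{(V)}$, and the condition becomes: there is an irreducible $V$ with $G_{(V)} = \{e\}$, which is precisely the statement that $G$ is linearly primitive (Definition \ref{linprimdef}).

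Combining these two observations gives the claimed equivalence directly, with no induction or analytic input beyond Theorem \ref{lwgrp}. There is essentially no obstacle here: the only point requiring a moment's care is the bookkeeping identity $V^{\{e\}} = V$ together with the convention $R^{\{e\}} = R$, both of which are immediate from the definitions. The corollary is thus a clean specialization rather than an independent argument.
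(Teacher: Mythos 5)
Your proposal is correct and matches the paper's intent exactly: the corollary is stated without proof precisely because it is the immediate specialization of Theorem \ref{lwgrp} to $H=\{e\}$, using $R^{\{e\}}=R$ and $V^{\{e\}}=V$ so that linear primitivity of the interval $[\{e\},G]$ (Definition \ref{linprim}) coincides with linear primitivity of the group $G$ (Definition \ref{linprimdef}). Nothing further is needed.
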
  


We will prove the dual versions of Theorem \ref{ore} and Corollary \ref{gene}, giving the link between combinatorics and representations theory. 

\begin{corollary} \label{dualore}  Let $[H,G]$ be a bottom Boolean interval of finite groups. Then $\exists V$ irreducible complex representation of $G$ such that $G_{(V^H)}=H$.
\end{corollary}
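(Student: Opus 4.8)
The plan is to reduce the statement to a w-cyclicity assertion about a fixed-point subfactor and then invoke Theorem \ref{mainsubfactor}. By Theorem \ref{lwgrp}, the interval $[H,G]$ is linearly primitive (i.e.\ admits an irreducible complex representation $V$ of $G$ with $G_{(V^H)}=H$) exactly when the subfactor $(R^G \subseteq R^H)$ is w-cyclic. So it suffices to prove that $\mathcal{P}(R^G \subseteq R^H)$ is w-cyclic whenever $[H,G]$ is bottom Boolean.

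First I would pin down the biprojection lattice of $(R^G \subseteq R^H)$. By Theorem \ref{bisch} together with the Galois correspondence for fixed-point subfactors, the intermediate subfactors $R^G \subseteq R^K \subseteq R^H$ are indexed by the subgroups $K$ with $H \subseteq K \subseteq G$; crucially this indexing is \emph{order-reversing}, since $R^{K_1} \subseteq R^{K_2}$ holds if and only if $K_2 \subseteq K_1$. Thus $e_1 = e^{R^H}_{R^G}$ corresponds to $K=G$ and $\id = e^{R^H}_{R^H}$ to $K=H$, and the biprojection lattice $[e_1,\id]$ is isomorphic to the order dual of the interval $[H,G]$.

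The key observation is then purely lattice-theoretic: the top interval of the order dual of $[H,G]$ coincides with the order dual of the bottom interval of $[H,G]$, and the order dual of a Boolean lattice is again Boolean (complementation is self-dual). Hence the hypothesis that $[H,G]$ is bottom Boolean is equivalent to $[e_1,\id]$ being top Boolean. Applying Theorem \ref{mainsubfactor} to $\mathcal{P}(R^G \subseteq R^H)$ gives w-cyclicity, and Theorem \ref{lwgrp} delivers the required representation $V$.

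I expect the only delicate point to be the order-reversal in the second step: one must carefully track which subgroup corresponds to $e_1$ and which to $\id$, because it is precisely this reversal that converts the ``bottom Boolean'' hypothesis into the ``top Boolean'' hypothesis demanded by Theorem \ref{mainsubfactor}. This is exactly what makes the present corollary the genuine dual of Theorem \ref{ore}: there the crossed-product subfactor $(R\rtimes H \subseteq R\rtimes G)$ yields an order-\emph{preserving} correspondence and so handles top Boolean intervals directly.
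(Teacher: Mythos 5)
Your proposal is correct and is essentially the paper's own proof: the paper likewise obtains the result as the group-theoretic reformulation of Theorem \ref{mainsubfactor} applied to $\mathcal{P}(R^G \subseteq R^H)$, translated through Theorem \ref{lwgrp}. The only difference is that you make explicit the order-reversing Galois correspondence (and the resulting bottom-Boolean/top-Boolean swap) that the paper leaves implicit, which is a faithful unpacking rather than a different route.
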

\begin{proof} It is the group theoretic reformulation of Theorem \ref{mainsubfactor} for $\mathcal{P}(R^G  \subseteq R^H)$, thanks to Theorem \ref{lwgrp}. 
\end{proof}
\noindent The proof of Theorem \ref{dualoreintro} follows by Proposition \ref{topBn}.
\begin{corollary} \label{dualgene}
The minimal number of irreducible components for a faithful complex representation of $G$ is at most the minimal length $\ell$ for an ordered chain of subgroups $$\{e\}=H_0 < H_1 < \dots < H_{\ell} = G$$ such that $[H_i,H_{i+1}]$ is bottom Boolean.
\end{corollary}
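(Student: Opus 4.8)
The plan is to dualize the proof of Corollary~\ref{gene}, applying Corollary~\ref{upper} to the fixed-point subfactor $\mathcal{P} = \mathcal{P}(R^G \subseteq R)$ instead of the crossed-product subfactor. Here $\mathcal{P}_{2,+}(R^G \subseteq R) \simeq \mathbb{C}G$, and the minimal central projections $p_i$ are exactly the central projections onto the irreducible components $V_i$. For a representation with distinct irreducible components $V_1, \dots, V_s$ one has $\ker(\pi_V) = \bigcap_{i=1}^s \ker(\pi_{V_i}) = \bigwedge_{i=1}^s G_{(V_i)}$, so by Lemma~\ref{corrminstab} (identifying $G_{(V_i)}$ with the subgroup of the biprojection $\langle p_i \rangle$) the representation is faithful if and only if $\bigvee_{i=1}^s \langle p_i \rangle = \langle p_1, \dots, p_s \rangle = \id$. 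Hence the quantity to bound equals $s :=$ the minimal number of minimal \emph{central} projections of $\mathbb{C}G$ generating $\id$.

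First I would reconcile $s$ with the quantity $r$ of Corollary~\ref{upper}, namely the minimal number of minimal projections generating $\id$. That $r = s$ is a short two-sided argument: given minimal projections $u_1, \dots, u_r$ with $\langle u_1, \dots, u_r \rangle = \id$, their central supports $p_1, \dots, p_r$ are minimal central projections with $\langle u_i \rangle \le \langle p_i \rangle$ by Lemma~\ref{th}(3), and since $\langle S \rangle = \bigvee_{x \in S} \langle x \rangle$ (from Definition~\ref{gener}), they still generate $\id$, giving $s \le r$; conversely, Proposition~\ref{mini} replaces each $p_i$ by a minimal projection $u_i \le p_i$ with $\langle u_i \rangle = \langle p_i \rangle$, giving $r \le s$. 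Thus the target number equals $r$, and Corollary~\ref{upper} yields $r \le \ell'$, where $\ell'$ is the minimal length of a chain $e_1 = b_0 < \dots < b_{\ell'} = \id$ of biprojections of $\mathcal{P}(R^G \subseteq R)$ with every $[b_i, b_{i+1}]$ top Boolean.

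The remaining step is to translate the biprojection chain into a subgroup chain. The Galois correspondence $K \mapsto e^R_{R^K}$ is an \emph{order-reversing} bijection from $\mathcal{L}(G)$ onto the biprojection lattice $[e_1, \id]$ (with $e_1 = e^R_{R^G}$ and $\id = e^R_R$), so a chain $e_1 = b_0 < \dots < b_{\ell'} = \id$ corresponds to $G = K_0 > \dots > K_{\ell'} = \{e\}$, which after relabelling $H_j := K_{\ell' - j}$ becomes $\{e\} = H_0 < \dots < H_{\ell'} = G$, each $[b_i, b_{i+1}]$ being anti-isomorphic to some $[H_j, H_{j+1}]$. The key observation is that under this order reversal the top interval of $[b_i, b_{i+1}]$ corresponds to the bottom interval of $[H_j, H_{j+1}]$, and since a Boolean lattice is self-dual, $[b_i, b_{i+1}]$ is top Boolean if and only if $[H_j, H_{j+1}]$ is bottom Boolean. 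Hence $\ell' = \ell$, the minimal length of a chain $\{e\} = H_0 < \dots < H_\ell = G$ with each $[H_i, H_{i+1}]$ bottom Boolean, and combining gives (minimal number of irreducible components of a faithful representation) $= s = r \le \ell' = \ell$.

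The hard part is precisely the self-duality bookkeeping of this last paragraph: one must verify that ``top Boolean'' for the biprojection lattice really transports to ``bottom Boolean'' for the subgroup lattice under the contravariant Galois map, so that the bound comes out in terms of bottom Boolean intervals (consistent with Corollary~\ref{dualore}) rather than top Boolean ones. Everything else is a mechanical dualization of the proof of Corollary~\ref{gene}.
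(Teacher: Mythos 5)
Your proposal is correct, and its skeleton coincides with the paper's: apply Corollary \ref{upper} to $\mathcal{P}(R^G \subseteq R)$, pass between minimal and minimal central projections, and transport the biprojection chain through the order-reversing Galois correspondence, where self-duality of Boolean lattices turns top Boolean intervals into bottom Boolean ones. The genuine difference is the ingredient used to translate ``$\langle p_1,\dots,p_s\rangle=\id$'' into ``$V_1\oplus\cdots\oplus V_s$ is faithful''. The paper does this via Corollary \ref{coprofusion}: by Definition \ref{gener} the generated biprojection is the range projection of $\sum_{n\le N}(p_1+\cdots+p_s)^{*n}$, and the fusion-rule identity $p_i*p_j=\delta\sum_k n_{ij}^{k}p_k$ identifies it with the projection onto the irreducibles occurring in tensor powers of $V$; this is precisely why the appendix exists (the paper says it is needed for Corollary \ref{dualgene}). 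You avoid coproducts entirely, using Lemma \ref{corrminstab} (so $\langle p_i\rangle$ is the biprojection of $G_{(V_i)}=\ker\pi_{V_i}$), the anti-isomorphism property of the Galois correspondence (joins of biprojections correspond to intersections of subgroups), and the lattice identity $\langle p_1,\dots,p_s\rangle=\bigvee_i\langle p_i\rangle$ --- which does hold, by applying Lemma \ref{th}(3) in both directions: $p_i\preceq\sum_j p_j$ gives $\bigvee_i\langle p_i\rangle\le\langle p_1,\dots,p_s\rangle$, and $\sum_j p_j\preceq\bigvee_i\langle p_i\rangle$ gives the reverse since a biprojection generates itself. So your argument is more elementary and makes this corollary independent of the appendix, whereas the paper's route buys an explicit representation-theoretic description of the generated biprojection (the Burnside-type tensor-power criterion, cf.\ the corollary following Lemma \ref{corrminstab}). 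One small remark: for the stated upper bound only the inequality $s\le r$ is needed, and that direction uses central supports plus Lemma \ref{th}(3); your use of Proposition \ref{mini} for $r\le s$ establishes the sharper equality $r=s$ (consistent with the paper citing Proposition \ref{mini}), but it is not strictly required.
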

\begin{proof} It's a reformulation of Corollary \ref{upper} for $\mathcal{P} = \mathcal{P}(R^G  \subseteq R)$, using Definition \ref{gener}, Proposition \ref{mini} and the fact that the coproduct of two minimal central projections of $\mathcal{P}_{2,+}$ is given by the tensor product of the associated irreducible representations of $G$, by Corollary \ref{coprofusion}. \end{proof}
\begin{definition} \label{core} A subgroup $H$ of a group $G$ is called \emph{core-free} if any normal subgroup of $G$ contained in $H$ is trivial. 
\end{definition}
\noindent Note that Corollary \ref{dualgene} can be improved be taking for $H_0$ any core-free subgroup of $H_1$, thanks to the following lemma.
\begin{lemma} \label{linprimgrp} For $H$ core-free, $G$ is linearly primitive if $[H,G]$ is so. 
\end{lemma}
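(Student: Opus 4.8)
The plan is to show that the very representation furnished by the hypothesis is already faithful, so that no new representation needs to be built. By Definition \ref{linprim}, since $[H,G]$ is linearly primitive, there is an irreducible complex representation $V$ of $G$ with $G_{(V^H)} = H$. The strategy is to produce a \emph{normal} subgroup of $G$ sitting inside $H$ and then kill it with the core-free hypothesis.

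First I would identify the pointwise stabilizer of the entire space with the kernel of the representation: directly from Definition \ref{fixstab}, $G_{(V)} = \{g \in G \mid gv = v \ \forall v \in V\} = \ker(\pi_V)$, which is in particular a normal subgroup of $G$. The next step is a monotonicity observation for pointwise stabilizers: because $V^H$ is a subspace of $V$, any $g$ fixing every vector of $V$ a fortiori fixes every vector of $V^H$, whence $G_{(V)} \subseteq G_{(V^H)}$. Combining this inclusion with the hypothesis $G_{(V^H)} = H$ yields $\ker(\pi_V) = G_{(V)} \subseteq H$.

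Finally I would invoke core-freeness (Definition \ref{core}): the subgroup $\ker(\pi_V)$ is normal in $G$ and contained in $H$, hence it is trivial. Therefore $G_{(V)} = \{e\}$, so $V$ is a faithful irreducible complex representation, and $G$ is linearly primitive by Definition \ref{linprimdef}.

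I do not anticipate a real obstacle: the argument reduces to a one-line inclusion of stabilizers together with the core-free condition. The only point deserving explicit care is the identification $G_{(V)} = \ker(\pi_V)$ and the inclusion $G_{(V)} \subseteq G_{(V^H)}$ coming from $V^H \subseteq V$; both are immediate from Definition \ref{fixstab}, but they are exactly the links that let the core-free hypothesis do its work.
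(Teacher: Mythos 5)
Your proposal is correct and is essentially identical to the paper's own proof: both take the irreducible $V$ witnessing linear primitivity of $[H,G]$, use $V^H \subseteq V$ to get $\ker(\pi_V) = G_{(V)} \subseteq G_{(V^H)} = H$, and then apply core-freeness to the normal subgroup $\ker(\pi_V)$ to conclude that $V$ is faithful. No gaps; the argument matches the paper step for step.
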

\begin{proof} Take $V$ as above. Now, $V^{H} \subset V$ so $G_{(V)} \subset G_{(V^H)}$, but $\ker(\pi_V) =  G_{(V)}$, it follows that $\ker(\pi_V) \subset H$; but $H$ is a core-free subgroup of $G$, and $\ker(\pi_V)$ a normal subgroup of $G$, so $\ker(\pi_V)= \{ e \}$, which means that $V$ is faithful on $G$, i.e. $G$ is linearly primitive. \end{proof}

\begin{remark} We get as well a non-trivial upper bound for the minimal number of irreducible components for a faithful (co)representation of a finite dimensional Kac algebra, involving the lattice of left coideal $\star$-subalgebras, by Galois correspondence \cite[Theorem 4.4]{ilp}. 
\end{remark}

\section{Appendix}
In the irreducible depth $2$ case, the relation between coproduct and fusion rules is well-known to experts, but we did not find a proof in the literature. Because we need it in the proof of Corollary \ref{dualgene}, for the convenience of the reader, we will prove this relation in this appendix.

Let $\mathcal{P}$ be a subfactor planar algebra which is irreducible and depth $2$, i.e. $\mathcal{P}_{1,+} = \mathbb{C}$ and $\mathcal{P}_{3,+}$ is a factor. By \cite[Section 3]{dk}, $\mathcal{P}=\mathcal{P} (R^{\mathbb{A}} \subseteq R)$, with $\mathbb{A}$ a Kac algebra equal to $\mathcal{P}_{2,+}$ and acting outerly on the hyperfinite ${\rm II}_1$ factor $R$.  

\begin{theorem}[Splitting, \cite{kls} p39] \label{split} Any element $x \in \mathbb{A}$ splits as follows:
$$\begin{tikzpicture}[scale=.5, PAdefn]
	\clip (0,0) circle (2.6cm);
	\draw[shaded] (-0.15,0) -- (100:4cm) -- (80:4cm) -- (0.15,0);
	\draw[shaded] (0,0) -- (-120:4cm) -- (-60:4cm) -- (0,0);
	\node at (0,0) [Tbox, inner sep=1.5mm] {{$x$} };
 \draw[fill=white] (-0.7,-4) .. controls ++(120:3cm) and ++(60:3cm) .. (0.7,-4);	
\end{tikzpicture}  
\hspace{-0.4cm} =  \  	
	\begin{tikzpicture}[scale=.5, PAdefn]
	\clip (0,0) circle (2.6cm);
	\draw[shaded] (-2,0) -- (0,5) -- (2,0) -- (-2,0);
	\draw[fill=white] (-0.7,-0.5) .. controls ++(120:3cm) and ++(60:3cm) .. (0.7,-0.5);
	\draw[shaded] (-1.6,0) -- (-120:4cm) -- (-100:4cm) -- (-1.4,0);
	\draw[shaded] (1.4,0) -- (-80:4cm) -- (-60:4cm) -- (1.6,0);
	\node at (-1.5,0) [Tbox, inner sep=0.2mm] {{$x_{(1)}$} };
	\node at (1.5,0) [Tbox, inner sep=0.2mm] {{$x_{(2)}$} };
\end{tikzpicture} 
\ \text{ and}  
\begin{tikzpicture}[scale=.5, PAdefn]
	\clip (0,0) circle (2.6cm);
	\draw[shaded] (-0.15,0) -- (260:4cm) -- (280:4cm) -- (0.15,0);
	\draw[shaded] (0,0) -- (60:4cm) -- (120:4cm) -- (0,0);
	\node at (0,0) [Tbox, inner sep=1.5mm] {{$x$} };
 \draw[fill=white] (-0.7,4) .. controls ++(240:3cm) and ++(300:3cm) .. (0.7,4);	
\end{tikzpicture} 
\hspace{-0.4cm} =  \ 	
\begin{tikzpicture}[scale=.5, PAdefn]
	\clip (0,0) circle (2.6cm);
	\draw[shaded] (-2,0) -- (0,-5) -- (2,0) -- (-2,0);
	\draw[fill=white] (-0.7,0.5) .. controls ++(240:3cm) and ++(300:3cm) .. (0.7,0.5);
	\draw[shaded] (-1.6,0) -- (-240:4cm) -- (-260:4cm) -- (-1.4,0);
	\draw[shaded] (1.4,0) -- (80:4cm) -- (60:4cm) -- (1.6,0);
	\node at (-1.5,0) [Tbox, inner sep=0.2mm] {{$x_{(1)}$} };
	\node at (1.5,0) [Tbox, inner sep=0.2mm] {{$x_{(2)}$} };
\end{tikzpicture}$$ 
\noindent with $\Delta(x) = x_{(1)} \otimes x_{(2)}$ the sumless Sweedler notation of the comultiplication. 
\end{theorem}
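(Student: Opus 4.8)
The plan is to reduce both splitting identities to the single algebraic relation that defines the comultiplication of the Kac algebra $\mathbb{A} = \mathcal{P}_{2,+}$ supplied by \cite{dk}. First I would recall that, the depth being $2$, the factor $\mathcal{P}_{3,+}$ together with \cite{dk} equips $\mathbb{A}$ with a comultiplication $\Delta$ which is, by construction, dual to the multiplication of the dual Kac algebra $\hat{\mathbb{A}} = \mathcal{P}_{2,-}$ under the nondegenerate pairing $\langle\,\cdot\,,\,\cdot\,\rangle \colon \mathbb{A}\times\hat{\mathbb{A}}\to\mathbb{C}$ built from the trace and the Fourier transform $\mathcal{F}$. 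Concretely, $\Delta$ is the unique linear map $\mathbb{A}\to\mathbb{A}\otimes\mathbb{A}$ satisfying $\langle\Delta(x),\xi\otimes\eta\rangle = \langle x,\xi\eta\rangle$ for all $\xi,\eta\in\hat{\mathbb{A}}$, the product $\xi\eta$ being taken in $\hat{\mathbb{A}}$; writing $\Delta(x)=x_{(1)}\otimes x_{(2)}$, the content of the theorem is that this $\Delta$ is the one computed by the displayed caps.

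Next I would note that the two displayed identities are interchanged by the vertical reflection of tangles, a symmetry of the planar calculus compatible with the Kac algebra structure of $\mathbb{A}$; it therefore suffices to establish the first. Since that identity is an equality between two vectors of a single box space, faithfulness of the trace reduces it to checking that the two sides pair identically with every elementary tensor $\xi\otimes\eta$ in $\hat{\mathbb{A}}\otimes\hat{\mathbb{A}}$, i.e. to closing up both diagrams against a pair of dual boxes and comparing the resulting scalars. On the left, planar isotopy pulls the two dual boxes through the white cap so that they merge into their product, and the closed diagram evaluates to $\langle x,\xi\eta\rangle$. On the right, the boxes $x_{(1)}$ and $x_{(2)}$ are paired independently, giving $\langle x_{(1)},\xi\rangle\,\langle x_{(2)},\eta\rangle = \langle\Delta(x),\xi\otimes\eta\rangle$. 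The two scalars coincide by the defining property of $\Delta$ recalled above, which establishes the identity.

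The step I expect to be the main obstacle is the normalization. Each cup and cap inserted by these tangles contributes a power of $\delta$, and one must track them so that the scalar produced when the white cap fuses $\xi$ and $\eta$ matches exactly the constant implicit in the pairing $\langle\,\cdot\,,\,\cdot\,\rangle$; this is carried out by repeated use of $a * e_1 = \delta^{-1}a$ and $a*\id = \delta\,\tr(a)\,\id$ together with their Fourier duals. A subsidiary point, and the only place where the depth-$2$ hypothesis is genuinely needed, is to know that the capped box decomposes in $\mathbb{A}\otimes\mathbb{A}$ in the first place: the assumption that $\mathcal{P}_{3,+}$ is a factor ensures that the side-by-side inclusion $\mathbb{A}\otimes\mathbb{A}\to\mathcal{P}_{3,+}$ is onto the relevant corner, so that the splitting into a sum of simple tensors $x_{(1)}\otimes x_{(2)}$ exists and is unique, as in \cite{kls}.
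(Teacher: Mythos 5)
The paper offers no proof of Theorem \ref{split} for you to be compared against: it is imported verbatim from \cite{kls} (p.~39), and the appendix merely uses it (in Corollary \ref{thmZZ} and Proposition \ref{tr(x(a*b))}). So your proposal has to be judged on its own, as a reconstruction of the argument living in \cites{kls,dk}. Its skeleton is indeed the standard one and is sound: take $\Delta$ to be the transpose of the multiplication of $\hat{\mathbb{A}}=\mathcal{P}_{2,-}$ under the diagrammatic trace pairing (this is what \cite{dk} supplies), test the claimed identity of $3$-boxes against pairs $\xi\otimes\eta$ of dual $2$-boxes, and use the depth-$2$ hypothesis to know that these functionals separate the relevant elements.

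That said, three steps are asserted where they carry the actual content. (1) The crux is your sentence that planar isotopy ``pulls the two dual boxes through the white cap so that they merge into their product'': the fact that capping converts side-by-side gluing of $\xi$ and $\eta$ into their vertical product paired against $x$ (with closed loops counted as factors of $\delta$) is precisely the diagrammatic statement being proved, rephrased in dual form, so it must be exhibited as an explicit isotopy with the normalization tracked, not invoked. (2) The reduction of the second displayed identity to the first by up-down reflection is not a free symmetry: reflection of tangles is the adjoint $x\mapsto x^{\star}$, so reflecting the first identity applied to $x^{\star}$ yields the second identity for $x$ only because $\Delta$ is a $\star$-homomorphism, $\Delta(x^{\star})=x_{(1)}^{\star}\otimes x_{(2)}^{\star}$; for a noncocommutative $\mathbb{A}$, skipping this point would leave you with the co-opposite coproduct and a false statement. (3) Bijectivity of the side-by-side map $\mathbb{A}\otimes\mathbb{A}\to\mathcal{P}_{3,+}$ does not follow from factorness of $\mathcal{P}_{3,+}$ alone; it follows from the depth-$2$ structure theorem (Ocneanu--Szyma\'nski, \cite{dk}), which gives the linear isomorphism $\mathcal{P}_{2,+}\otimes\mathcal{P}_{2,-}\cong\mathcal{P}_{3,+}$ by multiplication, composed with a Fourier transform on one leg. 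None of these is a wrong turn --- with them filled in, your outline becomes a correct proof --- but as written they are the gaps, and the first one is essentially the theorem itself.
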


\begin{corollary} \label{thmZZ}
If $a,b \in  \mathbb{A}$ are central, then so is $a * b$.
\end{corollary}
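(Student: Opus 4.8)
The plan is to prove centrality directly from the definition. Writing $\cdot$ for the von Neumann algebra product of $\mathbb{A} = \mathcal{P}_{2,+}$ (so that ``central'' means central for $\cdot$, the sense relevant to the minimal central projections used in Corollary~\ref{dualgene}), I must show that if $a \cdot x = x \cdot a$ and $b \cdot x = x \cdot b$ for every $x \in \mathbb{A}$, then $(a*b)\cdot x = x \cdot (a*b)$ for every $x$. The essential new tool is the splitting of Theorem~\ref{split}, which realizes the comultiplication $\Delta(x) = x_{(1)} \otimes x_{(2)}$ diagrammatically by a pair of through-strings; this is precisely the device that lets a single box $x$ interact simultaneously with the two inputs $a$ and $b$ of the coproduct.

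First I would fix the diagrammatic normal form of $(a*b)\cdot x$, obtained by stacking $x$ on top of the coproduct tangle of $a$ and $b$. I then apply the splitting theorem to the box $x$, replacing it by its two components $x_{(1)}$ and $x_{(2)}$ threaded through the diagram, one component lying on the strand feeding into $a$ and the other on the strand feeding into $b$. Here I rely on the fact that in a Kac algebra $\Delta$ is a coassociative $\star$-homomorphism, so that the splitting is compatible with the caps joining $a$ and $b$ inside the coproduct tangle.

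With $x$ thus split, the two halves move independently: using $a \cdot x_{(1)} = x_{(1)} \cdot a$ I slide $x_{(1)}$ from above $a$ to below $a$, and using $b \cdot x_{(2)} = x_{(2)} \cdot b$ I slide $x_{(2)}$ past $b$. After both slides the components sit below the coproduct tangle, and I reapply Theorem~\ref{split} in reverse to merge $x_{(1)}, x_{(2)}$ back into a single box $x$. The resulting diagram is exactly $x \cdot (a*b)$, yielding $(a*b)\cdot x = x \cdot (a*b)$ and hence the centrality of $a*b$.

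The main obstacle, as indicated, is the bookkeeping at the splitting step: one must check that the two through-strings produced by Theorem~\ref{split} attach to the $a$-arm and the $b$-arm of the coproduct consistently with coassociativity and with the shading and $\star$-structure, and that no spurious scalar survives the recombination. Once the correct routing is established, the two centrality slides and the reverse splitting are formal. As a sanity check that the statement is right, in the group case $\mathbb{A} \simeq \mathbb{C}G$ the product $\cdot$ is convolution, the centre is the algebra of class functions, and $*$ is the pointwise product, which indeed sends class functions to class functions.
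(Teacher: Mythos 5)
Your proof is correct and is essentially the paper's own argument (attributed there to Vijay Kodiyalam): stack $x$ on the coproduct tangle, split it via Theorem~\ref{split} into $x_{(1)}$ and $x_{(2)}$ riding the two strands, slide $x_{(1)}$ past $a$ and $x_{(2)}$ past $b$ by centrality, and merge by the reverse splitting to get $x \cdot (a*b)$. The ``bookkeeping'' you flag is exactly what the paper's four-diagram computation carries out, and no spurious scalar arises.
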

\begin{proof}
 This diagrammatic proof by splitting is due to Vijay Kodiyalam.
$$ \hspace*{0.2cm} (a*b) \cdot x =  \hspace{-0.3cm}
\begin{tikzpicture}[scale=.5, PAdefn]
	\clip (0,0) circle (3cm);
	\draw[shaded] (-0.15,0) -- (100:4cm) -- (80:4cm) -- (0.15,0);
	\draw[shaded] (0,1.2) -- (-130:4cm) -- (-50:4cm) -- (0,1.2);
	\draw[shaded] (0,1.2) -- (-130:4cm) -- (-50:4cm) -- (0,1.2);
	\node at (0,1.2) [Tbox, inner sep=1.35mm] {{$x$} };
	\draw[fill=white] (0,-1.2) circle (0.75cm);
	\node at (-1.2,-1.2) [Tbox, inner sep=1.5mm] {{$a$} };
	\node at (1.2,-1.2) [Tbox, inner sep=1.35mm] {{$b$} };
\end{tikzpicture}  
\hspace{-0.4cm} =     \textbf{\hspace{-0.2cm}}	
	\begin{tikzpicture}[scale=.5, PAdefn]
	\clip (0,0) circle (3cm);
	\draw[shaded] (-2,1) -- (0,5) -- (2,1) -- (-2,1);
	\draw[shaded] (-2,-1) -- (0,-5) -- (2,-1) -- (-2,-1);
	\draw[shaded] (-1.65,-1.2) -- (1.65,-1.2) -- (1.65,1.4) -- (-1.65,1.4) -- (-1.65,-1.2);
	\draw[fill=white] (0,0) ellipse (0.8cm and 2.3cm);
	\node at (-1.2,-1.2) [Tbox, inner sep=1.5mm] {{$a$} };
	\node at (1.2,-1.2) [Tbox, inner sep=1.35mm] {{$b$} };
	\node at (-1.2,1.4) [Tbox, inner sep=0.1mm] {{$x_{(1)}$} };
	\node at (1.2,1.4) [Tbox, inner sep=0.1mm] {{$x_{(2)}$} };
\end{tikzpicture} 
\hspace{-0.1cm} =    \hspace{-0.2cm}	
	\begin{tikzpicture}[scale=.5, PAdefn]
	\clip (0,0) circle (3cm);
	\draw[shaded] (-2,1) -- (0,5) -- (2,1) -- (-2,1);
	\draw[shaded] (-2,-1) -- (0,-5) -- (2,-1) -- (-2,-1);
	\draw[shaded] (-1.65,-1.2) -- (1.65,-1.2) -- (1.65,1.4) -- (-1.65,1.4) -- (-1.65,-1.2);
	\draw[fill=white] (0,0) ellipse (0.8cm and 2.3cm);
	\node at (-1.2,-1.2) [Tbox, inner sep=0.1mm] {{$x_{(1)}$} };
	\node at (1.2,-1.2) [Tbox, inner sep=0.1mm] {{$x_{(2)}$} };
	\node at (-1.2,1.4) [Tbox, inner sep=1.5mm] {{$a$} };
	\node at (1.2,1.4) [Tbox, inner sep=1.35mm] {{$b$} };
\end{tikzpicture} 
  \hspace{-0.05cm} =  \hspace{-0.3cm}
\begin{tikzpicture}[scale=.5, PAdefn]
	\clip (0,0.2) circle (3cm);
	\draw[shaded] (-0.15,2) -- (-80:4.2cm) -- (-100:4.2cm) -- (0.15,2);
	\draw[shaded] (0,-1) -- (50:5cm) -- (130:5cm) -- (0,-1);
	\node at (0,-1) [Tbox, inner sep=1.35mm] {{$x$} };
	\draw[fill=white] (0,1.4) circle (0.75cm);
	\node at (-1.2,1.4) [Tbox, inner sep=1.5mm] {{$a$} };
	\node at (1.2,1.4) [Tbox, inner sep=1.35mm] {{$b$} };
\end{tikzpicture}  
\hspace{-0.3cm} = x \cdot (a*b) \hspace*{0.2cm} \qedhere $$
 \end{proof}  

 \begin{proposition} \label{tr(x(a*b))}
 Consider $a,b,x \in \mathbb{A}$. Then $\langle a * b \vert x\rangle  =  \delta \langle a \otimes b \vert \Delta(x) \rangle$.
 \end{proposition}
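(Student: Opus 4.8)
The plan is to prove the identity $\langle a*b \mid x \rangle = \delta \langle a \otimes b \mid \Delta(x) \rangle$ diagrammatically, exploiting the splitting of the comultiplication provided by Theorem \ref{split}, in the same spirit as Kodiyalam's proof of Corollary \ref{thmZZ}. The key observation is that both sides should reduce to the \emph{same} closed diagram once everything is expressed in terms of the planar calculus, so the main work is unfolding the definitions of the coproduct, the Fourier transform, and the inner products into pictures and showing the pictures coincide.

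First I would write $\langle a * b \mid x \rangle = \tr(x^\star (a*b))$ by definition of the inner product. Using $a*b = \mathcal{F}(\mathcal{F}^{-1}(a)\mathcal{F}^{-1}(b))$ together with the trace property $\tr(\mathcal{F}(y)) = \delta\,\tr(y)$-type relations, I expect to redraw $\tr(x^\star(a*b))$ as a single diagram in which $a$, $b$, and $x^\star$ are each inserted as boxes, with the coproduct realized by the characteristic ``cup/cap'' tangle joining the two input strings of $a$ and $b$ to the two output strings consumed by $x$. The factor of $\delta$ should emerge precisely from one closed loop created in this normalization (this is where the $a*\id = \delta\tr(a)\id$ and $a*e_1 = \delta^{-1}a$ normalizations, recorded in Section \ref{bipro}, pin down the constant).

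Next I would compute the right-hand side. By definition $\langle a \otimes b \mid \Delta(x)\rangle = \langle a \otimes b \mid x_{(1)} \otimes x_{(2)}\rangle = \tr(x_{(1)}^\star a)\,\tr(x_{(2)}^\star b)$ in sumless Sweedler notation, which as a picture is the product of two separate trace-closures, one pairing $a$ with $x_{(1)}$ and one pairing $b$ with $x_{(2)}$. Now I apply the splitting identity of Theorem \ref{split}: the diagram for $\Delta(x)$ is exactly the box $x$ with its lower (or upper) pair of strings pulled apart into the two legs $x_{(1)}$ and $x_{(2)}$. Substituting this splitting into the two-trace picture, the two legs $x_{(1)},x_{(2)}$ reconnect through $x$, and the whole configuration becomes one connected diagram containing a single copy of $x$ paired against $a$ and $b$. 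The plan is to show, by an isotopy of tangles, that this diagram is identical to the one obtained for the left-hand side, again up to the same single loop contributing $\delta$.

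The main obstacle I anticipate is bookkeeping the normalization constant: making sure that exactly one factor of $\delta$ (and not $\delta^{\pm 2}$ or $1$) survives. This requires tracking carefully how many closed loops are produced when the coproduct tangle on the left is matched against the splitting tangle on the right, and how the $\mathcal{F}^{\pm 1}$ normalizations interact with the trace. I would fix conventions by testing the identity on the distinguished elements $x = \id$ and $x = \mathcal{F}^{-1}(e_1)$ (equivalently checking against $a*\id = \delta\tr(a)\id$ and $a*e_1 = \delta^{-1}a$), which determines the constant unambiguously and confirms the single factor of $\delta$; once the two pictures are seen to be isotopic, the equality of the scalars they evaluate to completes the proof.
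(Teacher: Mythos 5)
Your plan is essentially the paper's proof: both rest on the splitting Theorem \ref{split}, which (applied under the trace) gives $x^{\star}(a*b) = (x^{\star}_{(1)}a)*(x^{\star}_{(2)}b)$, so that $\langle a*b \mid x \rangle = \tr\bigl((x^{\star}_{(1)}a)*(x^{\star}_{(2)}b)\bigr)$, exactly the diagram-matching you describe. The normalization issue you anticipate is settled in the paper not by loop-counting or testing distinguished elements but by the irreducibility identity $\tr(y*z)=\delta\,\tr(y)\tr(z)$, which immediately yields $\delta\,\tr(x^{\star}_{(1)}a)\,\tr(x^{\star}_{(2)}b)=\delta\,\langle a\otimes b\mid \Delta(x)\rangle$ and pins down the single factor of $\delta$.
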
  
\begin{proof} By irreducibility, $\tr(x*y) = \delta \tr(x)\tr(y)$. Then, by Theorem \ref{split},
$$\tr(x^{\star} (a*b)) = \tr((x^{\star}_{(1)}a)*(x^{\star}_{(2)}b)) = \delta \tr(x^{\star}_{(1)}a) \tr(x^{\star}_{(2)}b).$$
The result follows by definition, $\langle x \vert y \rangle:= \tr(y^{\star}x)$ and  $\langle a \otimes b \vert c \otimes d  \rangle := \langle a \vert c \rangle \langle b \vert d \rangle$.
\end{proof} 

\noindent Note that as a finite dimensional von Neumann algebra, $$\mathbb{A} \simeq \bigoplus_i End(H_i).$$
As a Kac algebra, it acts on a tensor product $V \otimes W$ as follows:   
   $$\Delta(x) (v\otimes w) = (x_{(1)} v)\otimes (x_{(2)} w),$$
and $H_{i}\otimes H_{j}$ decomposes into irreducible representations 
$$ H_{i}\otimes H_{j} = \bigoplus_{k}{M_{ij}^{k} \otimes H_{k}}    $$   
with $M_{ij}^{k}$ the multiplicity space. It follows that
$$n_{i}  n_{j} = \sum_k n_{ij}^{k}  n_{k}$$
with $n_k = \dim (H_{k})$ and $n_{ij}^{k} = \dim (M_{ij}^{k})$.  The following proposition gives the relation between comultiplication and fusion rules $(n_{ij}^{k})$.

\begin{proposition} \label{thmcopro} The inclusion matrix of the unital inclusion of finite dimensional von Neumann algebras $\Delta(\mathbb{A}) \subseteq \mathbb{A} \otimes \mathbb{A}$ is $\Lambda = (n_{ij}^{k})$.  \end{proposition}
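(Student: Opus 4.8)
The plan is to read the inclusion matrix as a matrix of restriction multiplicities and then to recognize those multiplicities as the fusion coefficients $n_{ij}^k$, which by the decomposition displayed just before the statement are \emph{defined} as exactly such multiplicities. Recall that for a unital inclusion $A \subseteq B$ of finite dimensional von Neumann algebras, writing $A \simeq \bigoplus_{\alpha} \mathrm{End}(V_\alpha)$ and $B \simeq \bigoplus_{\beta} \mathrm{End}(W_\beta)$, the inclusion matrix $\Lambda$ has entry $\Lambda_{\beta\alpha}$ equal to the multiplicity of the irreducible $A$-module $V_\alpha$ inside the $B$-module $W_\beta$ restricted to $A$. Thus the whole argument reduces to identifying the two families of simple summands and computing one restriction.

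First I would pin down the ambient algebra: from $\mathbb{A} \simeq \bigoplus_i \mathrm{End}(H_i)$ one gets $\mathbb{A}\otimes\mathbb{A} \simeq \bigoplus_{i,j}\mathrm{End}(H_i \otimes H_j)$, so its simple summands are indexed by pairs $(i,j)$ with irreducible module $H_i \otimes H_j$. Next I would identify $\Delta(\mathbb{A})$: the comultiplication is a unital $\star$-homomorphism, and it is injective (injectivity is standard for a Kac algebra, the counit $\varepsilon$ furnishing a one-sided inverse via $(\varepsilon\otimes\mathrm{id})\circ\Delta = \mathrm{id}$), hence an isomorphism onto its image. Therefore $\Delta(\mathbb{A}) \simeq \mathbb{A} \simeq \bigoplus_k \mathrm{End}(H_k)$, with simple summands indexed by the same $k$ and the $k$-th one acting irreducibly on $H_k$.

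The core step is the restriction of the simple $\mathbb{A}\otimes\mathbb{A}$-module $H_i \otimes H_j$ to the subalgebra $\Delta(\mathbb{A})$. An element $\Delta(x)\in\Delta(\mathbb{A})$ acts on $H_i \otimes H_j$ by $\Delta(x)(v\otimes w) = (x_{(1)}v)\otimes(x_{(2)}w)$, which is precisely the tensor-product action of $\mathbb{A}$ transported through $\Delta^{-1}$. Hence, as an $\mathbb{A}$-module, the restriction of $H_i \otimes H_j$ to $\Delta(\mathbb{A})$ is the tensor product $H_i \otimes H_j$, whose irreducible decomposition is the given fusion rule $H_i \otimes H_j = \bigoplus_k M_{ij}^k \otimes H_k$. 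Consequently the multiplicity of the $k$-th irreducible $H_k$ of $\Delta(\mathbb{A})$ in the $(i,j)$-th irreducible $H_i \otimes H_j$ of $\mathbb{A}\otimes\mathbb{A}$ equals $\dim M_{ij}^k = n_{ij}^k$, and therefore $\Lambda_{(i,j),k} = n_{ij}^k$, as claimed.

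The one genuine subtlety, and the step I would take most care over, is the bookkeeping of labels: one must verify that the injectivity of $\Delta$ forces $\Delta(\mathbb{A})$ to carry the same simple-summand index set $k$ as $\mathbb{A}$, and that the irreducible $\Delta(\mathbb{A})$-module occupying the $k$-th summand is literally the $H_k$ appearing in the fusion decomposition, so that the two occurrences of the index $k$ truly coincide. Everything else is an unwinding of the definition of the inclusion matrix together with the fact that restriction along $\Delta$ is the tensor-product operation defining the fusion coefficients. Should a more self-contained computation be preferred, the same multiplicities can be recovered from Proposition \ref{tr(x(a*b))} by pairing central projections, but the representation-theoretic reading above is the most direct route.
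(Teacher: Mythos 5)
Your proof is correct and is essentially the paper's argument: both read the inclusion matrix off the fusion decomposition $H_i \otimes H_j = \bigoplus_k M_{ij}^k \otimes H_k$ of the diagonal $\Delta(\mathbb{A})$-action, you by invoking the standard restriction-multiplicity characterization of the inclusion matrix, the paper by computing the block images $\pi_{H_i \otimes H_j}(\Delta(\mathbb{A})) \subseteq \pi_{H_i \otimes H_j}(\mathbb{A} \otimes \mathbb{A})$ via the double commutant theorem and Schur's lemma and then assembling them over the faithful representation $V = \bigoplus_{i,j} H_i \otimes H_j$. Your explicit check that $\Delta$ is injective (via the counit) addresses a point the paper leaves implicit, and it is indeed needed so that the rows of $\Lambda$ are indexed by all of the $k$'s.
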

\begin{proof}
The irreducible representations of $\mathbb{A} \otimes \mathbb{A}$ are $(H_{i}\otimes H_{j})_{i,j}$, so by the double commutant theorem and the Schur's lemma, we get that   
$$\pi_{H_{i}\otimes H_{j}}(\mathbb{A} \otimes \mathbb{A}) = \pi_{H_{i}\otimes H_{j}}(\mathbb{A} \otimes \mathbb{A})'' = End(H_{i}\otimes H_{j}) \simeq M_{n_{i}n_{j}}(\mathbb{C}).$$ 
Moreover, by the fusion rules
$$\pi_{H_{i}\otimes H_{j}}(\Delta(\mathbb{A})) \simeq \bigoplus_{k}{M_{ij}^{k} \otimes \pi_{H_{k}}(\mathbb{A})}  \simeq \bigoplus_{k}{M_{ij}^{k} \otimes M_{n_{k}}(\mathbb{C})}.$$  
So, the inclusion matrix of the following inclusion is $(n_{ij}^k)_k$. 
$$\pi_{H_{i}\otimes H_{j}}(\Delta(\mathbb{A})) \subseteq \pi_{H_{i}\otimes H_{j}}(\mathbb{A} \otimes \mathbb{A}).$$
Take $V = \bigoplus_{i,j} H_{i}\otimes H_{j}$. Then, we have the isomorphism of inclusions:  
$$[\Delta(\mathbb{A})  \subseteq \mathbb{A}\otimes \mathbb{A}] \simeq [\pi_{V}(\Delta(\mathbb{A})) \subseteq \pi_{V}(\mathbb{A}\otimes \mathbb{A})].$$  
But $\pi_V =  \bigoplus_{i,j} \pi_{H_{i}\otimes H_{j}}$, so the result follows.   \end{proof} 

\begin{corollary} \label{coprofusion} Let $p_i \in \mathbb{A}$ be the minimal central projection on $H_i$. The relation between coproduct and fusion rules is the following: $$p_i * p_j = \delta \sum_k n_{ij}^{k} p_k.$$ \end{corollary}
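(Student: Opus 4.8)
The plan is to exploit that $p_i*p_j$ is forced to be central, and then to recover its coordinates in the basis of minimal central projections by pairing against each $p_k$. First, since $p_i$ and $p_j$ are central, Corollary \ref{thmZZ} gives that $p_i*p_j$ is central as well; hence it lies in the center $\bigoplus_k \mathbb{C}p_k$ and may be written
$$p_i*p_j=\sum_k c_k\,p_k$$
for some scalars $c_k$. As the minimal central projections are mutually orthogonal, $\langle p_k\vert p_l\rangle=\tr(p_l^{\star}p_k)$ vanishes for $k\neq l$ and equals $\tr(p_k)$ when $k=l$; so pairing the displayed identity with $p_k$ isolates one coefficient at a time:
$$c_k=\frac{\langle p_i*p_j\vert p_k\rangle}{\tr(p_k)}.$$
The whole computation therefore reduces to evaluating the numerator $\langle p_i*p_j\vert p_k\rangle$.

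Next I would move this pairing from the coproduct to the comultiplication using Proposition \ref{tr(x(a*b))}, which yields $\langle p_i*p_j\vert p_k\rangle=\delta\,\langle p_i\otimes p_j\vert \Delta(p_k)\rangle$. The surviving factor $\langle p_i\otimes p_j\vert \Delta(p_k)\rangle=(\tr\otimes\tr)\big((p_i\otimes p_j)\,\Delta(p_k)\big)$ is a purely representation-theoretic quantity: because $\Delta$ implements the tensor product, $\Delta(p_k)$ acts on the block $H_i\otimes H_j$ (which is cut out by $p_i\otimes p_j$) as the orthogonal projection onto its $H_k$-isotypic summand $M_{ij}^{k}\otimes H_k$. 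Evaluating this trace is exactly where Proposition \ref{thmcopro} enters: the inclusion matrix of $\Delta(\mathbb{A})\subseteq\mathbb{A}\otimes\mathbb{A}$ being $(n_{ij}^{k})$ identifies the multiplicity of that isotypic component as $n_{ij}^{k}$, so $\langle p_i\otimes p_j\vert \Delta(p_k)\rangle$ is $n_{ij}^{k}$ multiplied by the appropriate block trace-weights. Dividing by $\tr(p_k)$ as above and simplifying then produces the coefficient $c_k$, which is the assertion.

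The step I expect to be the main obstacle is this last one: the clean constant only emerges after carefully tracking the trace normalizations. The Markov (Haar) trace is proportional to the matrix trace on each summand $\End(H_k)$ with a weight depending on $n_k$ and $\delta$, and these weights intervene simultaneously in $\tr(p_k)$, in the trace of a minimal projection of the block $H_i\otimes H_j$, and in the rank $n_{ij}^{k}n_k$ of the isotypic projection. The delicate point is to verify that the dimensional factors $n_i,n_j,n_k$ combine correctly, leaving only the intended overall scalar; everything else is a formal consequence of centrality together with Propositions \ref{tr(x(a*b))} and \ref{thmcopro}. It is prudent to pin down this scalar on a boundary case, using the known normalization $a*e_1=\delta^{-1}a$ for the projection $e_1$ onto the trivial type, where $H_i\otimes H_j$ degenerates and the multiplicities $n_{ij}^{k}$ become Kronecker symbols.
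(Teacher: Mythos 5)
Your proposal follows exactly the same route as the paper's proof: centrality of $p_i * p_j$ via Corollary \ref{thmZZ}, expansion $p_i * p_j = \sum_k c_k p_k$, extraction of $c_k$ by pairing against $p_k$ (orthogonality of the minimal central projections), conversion of the pairing by Proposition \ref{tr(x(a*b))} into $\delta \langle p_i \otimes p_j \vert \Delta(p_k)\rangle$, and identification of $\Delta(p_k)$ on the block $End(H_i)\otimes End(H_j)$ as the isotypic projection onto $M_{ij}^{k}\otimes H_k$, which is the content of Proposition \ref{thmcopro}. The only difference is that the paper settles in one asserted line the evaluation you explicitly leave open, claiming $\langle p_i \otimes p_j \vert \Delta(p_k)\rangle = n_{ij}^{k}\tr(p_k)$; so, as written, your attempt is incomplete at precisely the point where all the content lies.

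However, your instinct that this trace-weight bookkeeping is ``the main obstacle'' is correct, and carrying it out (or running your own suggested boundary check) shows that the constant cannot be $\delta$. Since $\tr$ assigns weight $n_l/\delta^2$ to a rank-one projection of the block $End(H_l)$ (so $\tr(p_l)=n_l^2/\delta^2$, with $\delta^2=\dim\mathbb{A}=\sum_l n_l^2$), the product trace $\tr\otimes\tr$ restricted to the factor $End(H_i)\otimes End(H_j)$ equals $\frac{n_in_j}{\delta^4}$ times the matrix trace, while the isotypic projection $\pi_{H_i\otimes H_j}(\Delta(p_k))$ has matrix trace (rank) $n_{ij}^{k}n_k$. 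Hence
$$\langle p_i \otimes p_j \vert \Delta(p_k)\rangle=\frac{n_in_jn_kn_{ij}^{k}}{\delta^4}, \qquad\text{whereas}\qquad n_{ij}^{k}\tr(p_k)=\frac{n_k^2n_{ij}^{k}}{\delta^2},$$
and these differ in general; your recipe $c_k=\delta\langle p_i\otimes p_j\vert\Delta(p_k)\rangle/\tr(p_k)$ then yields
$$p_i * p_j=\frac{n_in_j}{\delta}\sum_k\frac{n_{ij}^{k}}{n_k}\,p_k,$$
not $\delta\sum_k n_{ij}^{k}p_k$. Your proposed test case detects this: for $\mathbb{A}=\mathbb{C}G$ the Jones projection $e_1$ \emph{is} the minimal central projection on the trivial representation, and the axiom $e_1 * e_1=\delta^{-1}e_1$ matches the corrected coefficient $\frac{n_in_j}{\delta n_k}=\frac{1}{\delta}$ but contradicts the printed $\delta$ (likewise, only the corrected constant is consistent with $\tr(x*y)=\delta\tr(x)\tr(y)$). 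So to finish: complete your argument with the weight computation above, and note that the statement should be corrected accordingly. The discrepancy is a constant-only issue: the support statement $p_k\preceq p_i * p_j\Leftrightarrow n_{ij}^{k}\neq 0$, which is all that the paper uses downstream (in Corollary \ref{dualgene} and the faithfulness corollary), holds with either normalization.
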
 
\begin{proof} By Corollary \ref{thmZZ}, there is $\epsilon_{ij}^{k} \ge 0$ such that $p_i * p_j = \sum_k \epsilon_{ij}^{k} p_k.$
So, $ \langle p_i * p_j  \vert p_k \rangle = \epsilon_{ij}^{k} \tr(p_k)$. But, by Propositions \ref{tr(x(a*b))} and \ref{thmcopro}, $$\langle p_i * p_j  \vert p_k \rangle = \delta \langle p_i \otimes p_j | \Delta(p_k) \rangle = \delta n_{ij}^{k} \tr(p_k).$$
The result follows. \end{proof}

\section{Acknowledgments} 
The author would like to thank the  Isaac  Newton  Institute  for  Mathematical  Sciences,  Cambridge,  for  support and  hospitality  during  the  programme  \textit{Operator  Algebras:  Subfactors  and  their  applications}, where work on this paper was undertaken. This work was supported by EPSRC grant no EP/K032208/1. Thanks to Vaughan Jones, V.S. Sunder, David Penneys, Vijay Kodiyalam, Zhengwei Liu and Keshab Bakshi for their interest on this work, fruitful exchanges and advice.

\begin{bibdiv}
\begin{biblist}
\bib{kcb}{article}{
   author={Bakshi, Keshab Chandra},
   title={Intermediate planar algebra revisited},
   journal={Internat. J. Math.},
   volume={29},
   number={12},
   year={2018},
   pages={42pp},
   doi={10.1142/S0129167X18500775},
} 
\bib{bdlr}{article}{
   author={Bakshi, Keshab Chandra},
   author={Das, Sayan},
   author={Liu, Zhengwei},
   author={Ren, Yunxiang},
   title={An angle between intermediate subfactors and its rigidity},
   journal={Trans. Amer. Math. Soc.},
   volume={},
   date={},
   number={},
   pages={},
   doi={10.1090/tran/7738},
}
\bib{bp}{article}{
   author={Balodi, Mamta},
   author={Palcoux, Sebastien},
   title={On Boolean intervals of finite groups},
   journal={J. Comb. Theory, Ser. A}
   volume={157}
   pages={49-69},
   date={2018},
   doi={10.1016/j.jcta.2018.02.004}, 
} 
\bib{bi}{article}{
   author={Bisch, Dietmar},
   title={A note on intermediate subfactors},
   journal={Pacific J. Math.},
   volume={163},
   date={1994},
   number={2},
   pages={201--216},
   doi={10.2140/pjm.1994.163.201}
}
\bib{br}{article}{
   author={Brown, Kenneth S.},
   title={The coset poset and probabilistic zeta function of a finite group.},
   journal={J. Algebra},
   volume={225},
   date={2000},
   number={2},
   pages={989--1012},
   doi={10.1006/jabr.1999.8221},
}
\bib{bu}{book}{
   author={Burnside, William},
   title = {Theory of groups of finite order. Second edition},
   publisher={Cambridge University Press},
   date={1911},
   pages={xxiv+512},
}
\bib{dk}{article}{
   author={Das, Paramita},
   author={Kodiyalam, Vijay},
   title={Planar algebras and the Ocneanu-Szyma\'nski theorem},
   journal={Proc. Amer. Math. Soc.},
   volume={133},
   date={2005},
   number={9},
   pages={2751--2759},
   doi={10.1090/S0002-9939-05-07789-0},
}
\bib{ilp}{article}{
   author={Izumi, Masaki},
   author={Longo, Roberto},
   author={Popa, Sorin},
   title={A Galois correspondence for compact groups of automorphisms of von
   Neumann algebras with a generalization to Kac algebras},
   journal={J. Funct. Anal.},
   volume={155},
   date={1998},
   number={1},
   pages={25--63},
   doi={10.1006/jfan.1997.3228},
}
\bib{jlw}{article}{
   author={Jiang, Chunlan},
   author={Liu, Zhengwei},
   author={Wu, Jinsong},
   title={Noncommutative uncertainty principles},
   journal={J. Funct. Anal.},
   volume={270},
   date={2016},
   number={1},
   pages={264--311},
   doi={10.1016/j.jfa.2015.08.007}
}
\bib{kls}{article}{
   author={Kodiyalam, Vijay},
   author={Landau, Zeph},
   author={Sunder, V. S.},
   title={The planar algebra associated to a Kac algebra},
   journal={Proc. Indian Acad. Sci. Math. Sci.},
   volume={113},
   date={2003},
   number={1},
   pages={15--51},
   doi={10.1007/BF02829677},
}
\bib{jo}{article}{
 author={Jones, Vaughan F. R.},
 title={Actions of finite groups on the hyperfinite type $ {\rm II}_{1} $ \
 factor},
 journal={Mem. Amer. Math. Soc.},
 volume={28},
 date={1980},
 number={237},
 pages={v+70},
 doi={10.1090/memo/0237},
}
\bib{jo2}{article}{
 author={Jones, Vaughan F. R.},
 title={Index for subfactors},
 journal={Invent. Math.},
 volume={72},
 date={1983},
 number={1},
 pages={1--25},
 doi={10.1007/BF01389127},
}
\bib{js}{book}{
 author={Jones, Vaughan F. R.},
 author={Sunder, V. S.},
 title={Introduction to subfactors},
 series={London Mathematical Society Lecture Note Series},
 volume={234},
 publisher={Cambridge University Press, Cambridge},
 date={1997},
 pages={xii+162},
 doi={10.1017/CBO9780511566219},
}
\bib{jo4}{article}{
 author={Jones, Vaughan F. R.},
 title={Planar algebras, I},
 date={1999},
 pages={122pp},
 note={To appear in New Zealand Journal of Mathematics. arXiv:math/9909027},
}
\bib{sk2}{article}{
 author={Kodiyalam, Vijay},
 author={Sunder, V. S.},
 title={On Jones' planar algebras},
 journal={J. Knot Theory Ramifications},
 volume={13},
 date={2004},
 number={2},
 pages={219--247},
 doi={10.1142/S021821650400310X},
}
\bib{la0}{book}{
   author={Landau, Zeph A.},
   title={Intermediate subfactors},
   note={Thesis (Ph.D.)--University of California at Berkeley},
   date={1998},
   pages={132},
}
\bib{la}{article}{
   author={Landau, Zeph A.},
   title={Exchange relation planar algebras},
   booktitle={Proceedings of the Conference on Geometric and Combinatorial
   Group Theory, Part II (Haifa, 2000)},
   journal={Geom. Dedicata},
   volume={95},
   date={2002},
   pages={183--214},
   doi={10.1023/A:1021296230310},
}
\bib{li}{article}{
   author={Liu, Zhengwei},
   title={Exchange relation planar algebras of small rank},
   journal={Trans. Amer. Math. Soc.},
   volume={368},
   date={2016},
   number={12},
   pages={8303--8348},
   doi={10.1090/tran/6582},
}
\bib{nk}{article}{
 author={Nakamura, Masahiro},
 author={Takeda, Zir{\^o}},
 title={On the fundamental theorem of the Galois theory for finite
 factors. },
 journal={Proc. Japan Acad.},
 volume={36},
 date={1960},
 pages={313--318},
}
\bib{or}{article}{
   author={Ore, Oystein},
   title={Structures and group theory. II},
   journal={Duke Math. J.},
   volume={4},
   date={1938},
   number={2},
   pages={247--269},
   doi={10.1215/S0012-7094-38-00419-3},
}
\bib{p1}{article}{
   author={Palcoux, Sebastien},
   title={Ore's theorem for cyclic subfactor planar algebras and beyond},
   journal={Pacific J. Math.},
   volume={292},
   number={1},
   date={2018},
   pages={203--221},
   doi={10.2140/pjm.2018.292.203}, 
}
\bib{p4}{article}{
   author={Palcoux, Sebastien},
   title={Dual Ore's theorem on distributive intervals of finite groups},
   journal={J. Algebra}
   volume ={505},
   pages = {279--287},
   date = {2018},
   doi={10.1016/j.jalgebra.2018.03.017},
}
\bib{p3}{article}{
   author={Palcoux, Sebastien},
   title={Euler totient of subfactor planar algebras},
   journal={Proc. Am. Math. Soc.}
   volume={146},
   number={11},
   date={2018},
   pages={4775--4786},
   doi={10.1090/proc/14167}
}
\bib{p0}{article}{
   author={Palcoux, Sebastien},
   title={Ore's theorem on cyclic subfactor planar algebras and applications},
   note={arXiv:1505.06649, 50pp, long version of \cite{p1}.},
}
\bib{p2}{article}{
   author={Palcoux, Sebastien},
   title={Dual Ore's theorem for distributive intervals of small index},
   pages={15pp},
   note={arXiv:1610.07253}
}
\bib{sw}{article}{
author = {Shareshian, John},
author = {Woodroofe, Russ},
title = {Order complexes of coset posets of finite groups are not contractible},
journal = {Adv. Math.},
volume ={291},
pages = {758--773},
date = {2016},
doi = {10.1016/j.aim.2015.10.018},
}
\bib{sta}{book}{
   author={Stanley, Richard P.},
   title={Enumerative combinatorics. Volume 1},
   series={Cambridge Studies in Advanced Mathematics},
   volume={49},
   edition={2},
   publisher={Cambridge University Press},
   date={2012},
   pages={xiv+626},
}
\bib{wa}{article}{
   author={Watatani, Yasuo},
   title={Lattices of intermediate subfactors},
   journal={J. Funct. Anal.},
   volume={140},
   date={1996},
   number={2},
   pages={312--334},
   doi={10.1006/jfan.1996.0110},
}
\bib{xu1}{article}{
   author={Xu, Feng},
   title={On representing some lattices as lattices of intermediate subfactors of finite index},
   journal={Adv. Math.},
   volume={220},
   date={2009},
   number={5},
   pages={1317--1356},
   doi={10.1016/j.aim.2008.11.006},
}
\bib{xu2}{article}{
   author={Xu, Feng},
   title={Symmetries of subfactors motivated by Aschbacher-Guralnick conjecture},
   journal={Adv. Math.},
   volume={289},
   date={2016},
   number={},
   pages={345--361},
   doi={10.1016/j.aim.2015.10.029},
}
\end{biblist}
\end{bibdiv}
\end{document}